\numberwithin{equation}{section}
\newcommand{\R}{\Rz}
\newcommand{\N}{\Nz}
\newcommand{\Rz}{\mathbb{R}}
\newcommand{\Nz}{\mathbb{N}}
\newcommand{\up}{\to}
\newcommand{\eps}{\varepsilon}
\newcommand{\U}{U}
\newcommand{\ls}{|\partial \phi|}       % local slope
\newcommand{\rls}{|\partial^- \phi|}    % relaxed local slope
\newcommand{\rsls}{|\partial^+ \phi|}  % (relaxed) strong local slope
\newcommand{\weak}{\buildrel \sigma \over \to} % convergenza nella topologia
\newcommand{\weakto}{\rightharpoonup} %convergenza nella topologia debole dello spazio di Banach
\newcommand{\wweak}{\buildrel \sigma\times \sigma \over \to}     % convergenza nella topologia \sigma
\newcommand{\du}{d_\phi}
\newcommand{\ds}{d_\sigma}
\newcommand{\loc}{\text{\rm loc}}
\newcommand{\forae}{\text{for a.a. }}
\newcommand{\xX}{{\mbox{\boldmath$X$}}}
\newcommand{\Flow}{\xX}
\newcommand{\boldpartial}{{\boldsymbol \partial}}
\newcommand{\sfl}{G}
\newcommand{\geneset}{\sfl}
\newcommand{\att}{A}
\newcommand{\cx}{\mathcal{X}}
\newcommand{\dcx}{d_\cx}
\newcommand{\rest}{Z(\sfl)}
\newcommand{\ttau}{{\boldsymbol \tau}}
\newcommand{\down}{\downarrow}
\newcommand{\weaksto}{\rightharpoonup^*}
\newcommand{\limitingSubdifferential}{\partial_{\ell}}
\newcommand{\lmsbd}{\limitingSubdifferential}
\newcommand{\stronglimitingSubderivative}{\partial_{s}}
\newcommand{\slmsbd}{\stronglimitingSubderivative}
\newcommand{\FrechetSubdifferential}{\partial}
\newcommand{\frsbd}{\FrechetSubdifferential}
\newcommand{\argmin}{\mathop{\rm Argmin}}
\newcommand{\AC}{\mathop{\rm AC}}
\newcommand{\Ene}{\mathcal{E}}
\newcommand{\Banach}{{\mathcal B}}
\newcommand{\Hilbert}{{\mathcal H}}
\newcommand{\dualoperator}
  \def\rmC{{\mathrm C}}
\def\rmJ{{\mathrm J}}
\newcommand{\dive}{\mathop{\rm div}}
\def\dd{\;\!\mathrm{d}} % differential for integration
\newcommand{\ddd}{\mathrm{d}} % differential for derivatives
\newcommand{\pairing}[4]{ \sideset{_{ #1 }}{_{ #2 }}  {\mathop{\langle #3 , #4
\rangle}}}
\newcommand{\Diff}{{\mathsf{D}}}
\newcommand{\teta}{\vartheta}
\newcommand{\zzeta}{\mathbf{\zeta}}
\newcommand{\nn}{\mathbf{n}}
\newcommand{\nnu}{\mathbf{\nu}}
\newcommand{\nchi}{{\raise.2ex\hbox{$\chi$}}}
\newcommand{\phims}{\phi_{\mathrm{MS}}}
\newcommand{\phiste}{\phi_{\mathrm{Ste}}}
\definecolor{ddcyan}{rgb}{0,0.1,0.9}
\definecolor{ddmagenta}{rgb}{0.8,0,0.8}
\definecolor{orange}{rgb}{0.6,0.2,0}
\newcommand{\Utau}{U_{\ttau}}
\newcommand{\piecewiseConstant}[2]{\overline{#1}_{\kern-1pt#2}}
 \def\trait #1 #2 #3 {\vrule width #1pt height #2pt depth #3pt}
 \def\fin{\hfill
         \trait .3 5 0
         \trait 5 .3 0
         \kern-5pt
         \trait 5 5 -4.7
         \trait 0.3 5 0
 \medskip}
\newtheorem{theorem}{Theorem}[section]
\newtheorem{remark}[theorem]{Remark}
\newtheorem{corollary}[theorem]{Corollary}
\newtheorem{definition}[theorem]{Definition}
\newtheorem{proposition}[theorem]{Proposition}
\newtheorem{problem}[theorem]{Problem}
\newtheorem{lemma}[theorem]{Lemma}
\newtheorem{example}[theorem]{Example}
\begin{document}

\title[Global attractors for gradient flows in metric spaces]{Global
attractors
for  gradient flows \\ in metric spaces}

\author{Riccarda Rossi}
  \address{Dipartimento di Matematica, Universit\`a di Brescia, via Valotti 9, I--25133 Brescia, Italy}
\email{\tt riccarda.rossi@ing.unibs.it}
\urladdr{http://dm.ing.unibs.it/~riccarda.rossi/}

%\author{\,Giuseppe Savar\'e}
%\address{Dipartimento di Matematica, Universit\`a di Pavia, via Ferrata 1, I--27100 Pavia, Italy}
%\email{giuseppe.savare@unipv.it}

\author{Antonio Segatti}
\address{Dipartimento di Matematica ``F. Casorati'', via Ferrata 1, I--27100 Pavia, Italy}
\email{antonio.segatti@unipv.it}
\urladdr{http://www-dimat.unipv.it/segatti/ita/res.php}

\author{Ulisse Stefanelli}
\address{IMATI -- CNR, via Ferrata 1, I--27100 Pavia, Italy}
\email{ulisse.stefanelli@imati.cnr.it}
\urladdr{http://www.imati.cnr.it/ulisse/}

\date{Oct. 16, 2009}

\thanks{R. R. and U. S. have been partially supported by a MIUR-PRIN'06 grant for the project
 \emph{Variational methods in optimal mass transportation and in geometric
 measure theory}. U. S. is also supported by the FP7-IDEAS-ERC-StG grant
\#200497 (BioSMA). A. S. has been partially
 supported by the Deutsche Forschungsgemeinschaft (DFG), within the Priority
Program SPP $1095$ \emph{Analysis, Modelling and Simulation of
Multiscale Problems}.}

\keywords{Analysis in metric spaces, curves of maximal slope, global
attractor, gradient flows in Wasserstein spaces, doubly nonlinear
equations}
 \subjclass[2000]{35K55}

\begin{abstract}
We develop the long-time analysis  for
 gradient flow equations  in metric spaces. In particular, we
consider two notions of solutions  for {\it metric  gradient flows},
 namely  \emph{energy} and \emph{generalized solutions}. While the
former concept    coincides with the notion of   \emph{curves of
maximal slope} of~\cite{Ambrosio05}, we introduce the latter to
include limits of time-incremental approximations constructed via
the \emph{Minimizing Movements} approach \cite{DeGiorgi93,Ambrosio95}.

 For both  notions of solutions we prove
 the existence of the global attractor. Since the
 evolutionary
problems we consider may lack uniqueness, we rely on the theory of
\emph{generalized semiflows} introduced in~\cite{Ball97}.

The notions of generalized and energy solutions are quite flexible and can
be used to address gradient flows in a variety of contexts,
 ranging from Banach spaces to Wasserstein
spaces of probability measures.
 We  present applications of our abstract results by proving
  the existence of the global
attractor for the \emph{energy solutions} both  of abstract doubly
nonlinear evolution equations in reflexive Banach spaces, and of  a
class of evolution equations in Wasserstein spaces, as well as  for
the \emph{generalized solutions} of some  phase-change
 evolutions driven by mean curvature.
\end{abstract}

\maketitle

%\tableofcontents

%\pagestyle{myheadings}

%%%%%%%%%%%%%%%%%%%%%%%%%%%%%%%%%
%% text
%%%%%%%%%%%%%%%%%%%%%%%%%%%%%%%%%

\section{Introduction} \label{s:1} \setcounter{equation}{0}
\noindent  Gradient flows in metric spaces
 have   recently been   the object of
intensive studies.  We especially refer to the monograph by
\textsc{Ambrosio, Gigli \& Savar\'e}~\cite{Ambrosio05} for a
systematic analysis of {\it metric gradient flows}
 from the viewpoint of existence and approximation of solutions,
as well as their uniqueness. Moreover, some decay to equilibrium result has also been developed.

The main aim of this paper is to progress further in the analysis of the long-time dynamics for metric gradient flows. In particular, we focus on the existence of the global attractor for the
\emph{generalized semiflow}~\cite{Ball97} associated with suitable solution notions. Moreover, we present new convergence-to-equilibrium results and apply the abstract theory to doubly nonlinear equations in Banach spaces, to evolutions in Wasserstein spaces, and to some phase-change problems driven by mean curvature.

\paragraph{\bf Energy and generalized solutions.}
The possibility of discussing gradient flow dynamics in metric spaces relies on a suitable {\it scalar} formulation of the evolution problem. Let us develop some heuristics by starting from the classical setting of the
Euclidean  space $\R^d$ and from a proper, smooth functional $\phi$
on $\R^d$,  driving the gradient flow equation
\begin{equation}
\label{e:1.1} u'(t) + \nabla \phi (u(t)) =0  \qquad \text{in $\R^d$
} \ \ \forae\, t \in (0,T).
\end{equation}
Now, testing \eqref{e:1.1} by $u'(t)$ and taking into account the
\emph{chain rule}
\begin{equation}
\label{e:cr-1} \frac{\ddd}{\ddd t} (\phi \circ u) (t) = \langle
\nabla \phi (u(t)), u'(t) \rangle \qquad \forae\, t \in (0,T)\,
\end{equation}
 it is straightforward to check that \eqref{e:1.1} may be
 equivalently reformulated as %$(1<p<\infty, \ 1/p + 1/q=1)$
\begin{equation}
\label{e:1.2}   \frac{\ddd}{\ddd t} (\phi \circ u) (t)= -\frac12
|u'(t)|^2 - \frac12 |\nabla \phi (u(t))|^2
 \qquad \forae\, t \in
(0,T).
\end{equation}

Equation \eqref{e:1.1} bears no sense in a metric space where a linear structure is missing. On the other hand, relation \eqref{e:1.2} is {\it scalar} and can serve as a possible notion of {\it metric gradient flow} evolution, as soon as one provides some possible metric surrogate for the {\it norm} of the time-derivative and the {\it norm} of a gradient (and not for the {\it full} time-derivative or the {\it full} gradient). These are provided by the scalar {\it metric derivative} $t \mapsto |u'|(t) $ and {\it slope} $t \mapsto |\partial \phi|(u(t))$, which are suitably defined for the metric-space-valued trajectory $t \mapsto u(t)\in (U,d)$ (see Section 2 for their definitions and properties). In particular, the {\it metric} (and doubly nonlinear) reinterpretation of \eqref{e:1.2} reads $(1<p<\infty, \ 1/p + 1/p'=1)$

\begin{equation}
\label{eq:formu-p}
\begin{gathered}
\text{the map $t \in (0,T) \mapsto \phi(u(t))$ is absolutely
continuous and}
\\
\frac{\ddd}{\ddd t} (\phi \circ u) (t)=  -\frac1p |u'|^{p'}(t)
-\frac1{p'}\ls^{q}(u(t))  \quad \forae \ t \in (0,T).
\end{gathered}
\end{equation}
Our stronger notion of solvability for metric gradient flows is that of {\it energy solutions}: namely, curves $t \mapsto u(t)$ fulfilling \eqref{eq:formu-p}, where $|\partial \phi|$ is replaced by a suitable relaxation. A weaker notion of solution we aim to consider is that of {\it generalized solutions}. The latter are intimately tailored to the concept of {\it Minimizing Movements} \cite{DeGiorgi93, Ambrosio95}, namely a natural limiting object arising in connection with time-discretization procedures.
% Notice that \eqref{eq:formu-p} is equivalent to the \emph{energy
% identity}
% \begin{equation}
% \label{e:enid-p} \frac1p \int_s^t |u'|^p(r) \dd r + \frac1{p'}
% \int_s^t \ls^{p'}(u(r)) \dd r + \phi(u(t)) = \phi(u(s)) \quad
% \text{for all $0 \leq s \leq t \leq T$.}
% \end{equation}
%  When the local slope $\ls$ is lower semicontinuous,
% \eqref{eq:formu-p} in fact coincides with our notion of \emph{energy
% solution} of the metric $p$-gradient flow associated with $\phi$,
% cf. Definition~\ref{def:ly-en}. In the general case, energy
% solutions are absolutely continuous curves
% satisfying~\eqref{eq:formu-p}, in which the local slope is replaced
% by its lower semicontinuous relaxation $\rsls$.

We know from ~\cite{Ambrosio95, Ambrosio05} that, if
\begin{equation}
\label{COMP} \tag{\textrm{COMP}}
 \text{$\phi$ has compact sublevels in $\U$,}
\end{equation}
(and hence $\phi$ is lower semicontinuous), if
some conditional continuity holds  (see \eqref{cond-cont} in Section 2 below)  and (some relaxation of) $\ls$ fulfills a suitable {\it metric chain-rule inequality} (see \eqref{e:2.3} below),
 then the  time-incremental approximations  constructed via the {Minimizing Movements} scheme converge to an energy solution.

 Moreover, the \emph{sole
 compactness}
 \eqref{COMP} is sufficient to guarantee that
 approximate solutions converge to some curve $t \mapsto u(t)$ which
 satisfies a specific energy inequality (see \eqref{1}). The
 latter keeps track of the limit $t \mapsto \varphi(t)$ of the energy $\phi$ along the
 approximate solutions, which   does not, in general, coincide with
 $\phi \circ u$. % unless the abovementioned  conditional continuity condition
 %holds.
  We shall  call the pair $(u,\varphi)$ a \emph{generalized
 solution} of the metric gradient flow. Note that energy solutions $t \mapsto u(t)$
give rise to generalized solutions $t \mapsto (u(t),\phi(u(t)))$.

\paragraph{\bf Main results.}
In~\cite[Chap.~4]{Ambrosio05} the large-time behavior \emph{energy solutions} was analyzed in case $p=2$, by requiring that the functional
\begin{equation}
\label{convex}
 \text{$\phi$ is $\lambda$-geodesically convex for
some $\lambda>0$.}
\end{equation}
Such condition extends the usual notion of  $\lambda$-convexity
to the metric framework (see \eqref{def:l-geod-convex}).
Under \eqref{convex}, it was proved that
 every trajectory $t \mapsto  u(t) $ exponentially
 converges  as $t \to +\infty$ to the
unique minimum point  $\bar{u}$ of $\phi$.    Moreover,
 the exponential decay of the energy $\phi(u(t))$ to the equilibrium energy
$\phi(\bar{u})$ was  obtained (see
also~\cite{Carrillo-McCann-Villani03, Carrilloal}).
 In addition, it was shown that,
 under some structural, convexity-type condition on the ambient
metric space,  the Cauchy problem
for~\eqref{eq:formu-p}, in the case $p=2$,
 admits a unique solution,
generating a $\lambda$-contracting semigroup.

In this paper,
 we shall complement the long-time behavior results
of~\cite{Ambrosio05} in two
 directions,
  namely the construction of the global attractor and the
 investigation of the convergence of single trajectories to
 stationary states.

  Firstly,  we shall prove that, under   very general  hypotheses on
 $\phi$, no structural assumption on the ambient space, no geodesic convexity on $\phi$,
   and for  general
 $1<p<\infty$,
 both the set of energy  and the set of generalized solutions admit
 a \emph{global attractor},  namely a maximal, compact, invariant set attracting all bounded sets in the phase space.
Our choice of the phase space is dictated by the energy $\phi$.
Indeed, the functional  $
\phi$ decreases along trajectories and is thus a {\it Lyapunov
function} for the system. In view of this,
 as  already pointed out  in~\cite{Rocca-Schimperna04, Segatti06},
as well as  in~\cite{Rossi-Segatti-Stefanelli08} (where the
long-time behavior
 of gradient flows of nonconvex functionals in Hilbert spaces was
investigated), it is significant to  set our long-time analysis in
the
 metric phase space
 \begin{equation}
 \label{p-space}
(\mathcal{U}, d_{\mathcal{U}}), \quad \mathcal{U}:=D(\phi), \quad d_{\mathcal{U}}(u,v):= d(u,v) +
|\phi(u)-\phi(v)|
\end{equation}
 for energy solutions $u$ and in
an analogously defined,
 \emph{augmented},  phase space for genera\-lized solutions
$(u,\varphi)$.

Due to the possible nonconvexity of the functional $\phi$ and to the
\emph{doubly nonlinear charac\-ter} of \eqref{eq:formu-p} for $p\neq
2$, uniqueness of energy  (and, a fortiori, of generalized)
solutions may genuinely fail.  In recent years, several approaches
have been developed  to extend the well-established theory of
attractors for semigroups (see, e.g., \cite{Temam88}), to
differential problems without uniqueness. In this connection,
without claiming completeness we may recall  the results
in~\cite{Sell73, Sell96, Chep-Vish95, Mell-Vall98, Mell-Vall00,
Caraballo03}.  Here, we shall specifically move within the frame
advanced by  \textsc{J.~M. Ball}
 \cite{Ball97,Ball04}, namely with the theory of \emph{generalized
 semiflows} (Appendix \ref{s:a2}).

The two main abstract theorems of this paper run as follows:
\begin{itemize}
  \item
compactness \eqref{COMP} and the boundedness of equilibria entail the existence of the global attractor for the generalized semiflow of {\it generalized solutions} (Thm. \ref{teor:1}).\\
 \item
 compactness \eqref{COMP}, conditional continuity \eqref{cond-cont}. the metric chain-rule inequality \eqref{e:2.3}, and the boundedness of equilibria in \eqref{p-space} yield the existence of the global attractor for the generalized semiflow of {\it energy solutions} (Thm. \ref{thm:ene}).
\end{itemize}
Note that, apart from the boundedness of equilibria, the existence of a global attractor is obtained {\it under the very same conditions} ensuring existence of the corresponding solution notions.

Moreover,  we shall extend the convergence to equilibrium results
of~\cite{Ambrosio05} to the case of energy solutions~\eqref{eq:formu-p} with $p
\neq 2$. As already
 pointed out  in~\cite[Rmk.~2.4.7]{Ambrosio05},  the key condition is  a
suitable  generalization of \eqref{convex}, in which the modulus of
convexity depends on the $p$-power of the distance, i.e.
\begin{equation}
\label{p-convex} \tag{\textrm{CONV}} \text{$\phi$ is
$(\lambda,p)$-geodesically convex for some $\lambda>0$,}
\end{equation}
(see Section~\ref{ss:2.2}).
Hence, in Theorem~\ref{thm:3} we shall prove that, if $\phi$
complies with~\eqref{COMP} and~\eqref{p-convex}, then
every trajectory $t \mapsto u(t) $ of~\eqref{eq:formu-p}
 converges as $t \to +\infty$ to the
unique minimum point  $\bar{u}$ of $\phi$  exponentially  fast,
again with energy $\phi(u(t))$  exponentially  decaying to
$\phi(\bar{u})$. This in particular entails that the global
attractor for energy solutions reduces to the singleton $\{
\bar{u}\}$. Notice however that uniqueness of solutions
to~\eqref{eq:formu-p}, even under \eqref{p-convex} and the  convexity
structural condition on $(\U,d)$ imposed
in~\cite[Chap.~4]{Ambrosio05},  still seems to be an open problem.
%
% We  shall work with  two notions of solutions
%to gradient flows in such a general context: \emph{generalized
%solutions} and \emph{energy solutions}.
%\begin{rcomm}
%The former are the limits of minimizing movements scheme...
%\end{rcomm}
%The latter  turn out to be a particular case of generalized
%solutions and  In fact, we shall introduce and motivate it
%  via the
%\emph{Minimizing Movement} approach. The latter highlights
%properties of the functional $\phi$, driving the gradient flow, are
%sufficient for generalized solutions to make sense (whereas, as we
%shall see, the notion of \emph{energy solutions} can be introduced
%provided the functional $\phi$ enjoys some  regularity via an
%appropriate form of the chain rule, see~\eqref{e:chain-rule-intro}).
%%%%%%%%%%%%%%
%\begin{rcomm}di fatto considereremo more general evolutions with a gradient
%structure.. p-gradient flows.. e qui cita tutti quelli che devi
%citare a livello di doubly nonlinear. \\ dire anche che svilupperemo
%risultati astratti nel caso di una nozione di convessit\`a
%generalizzata..
%\end{rcomm}
%Inoltre nel caso di convessit\`a generalizzata si estende al caso
%$p$ risultati di decadimento esponenziale all'equilibrio (in qs.
%caso attrattore \`e singoletto etc etc)
%%%%%%%%%%%%%%%%%%%%%%%%%
\paragraph{\bf Application to doubly nonlinear evolutions in Banach spaces.} %\label{ss:1.3}
Our first example of energy solutions  (see
Section~\ref{ss:cvx-gflows})  is provided by abstract doubly
nonlinear evolution equations in a reflexive Banach space $\Banach$,
of the type
\begin{equation}
\label{e:dne-intro} \rmJ_p (u'(t)) + \partial \phi (u(t)) \ni 0  \ \
\text{in $\Banach'$}\quad \forae\, t \in (0,T)\,,
 \end{equation}
where  $\rmJ_p : \Banach \rightrightarrows \Banach'$ is the
$p$-duality map (see~\eqref{p-dual} later on), $\phi: \Banach \to
(-\infty,+\infty]$ a proper, lower semicontinuous and convex
functional and $\partial \phi$ its subdifferential in the sense of
convex analysis.

The long-time behavior of  doubly nonlinear equations driven by
\emph{nonconvex} functionals has been recently investigated
in~\cite{Segatti06, Akagi08}, where the  existence of the global
attractor for~\eqref{e:dne-intro} has been obtained in the case
$\phi$ is $\lambda$-convex and, respectively,  when $\partial \phi$
is perturbed by a non-monotone multi-valued operator. We also refer
to~\cite{Schimperna-Segatti-Stefanelli07, Schimperna-Segatti08},
where convergence to equilibrium  for large times of the
trajectories of Allen-Cahn type equations has been proved.

In Section~\ref{s:6}, following the outline
of~\cite{Rossi-Mielke-Savare08}  we  shall apply our metric approach
to the long-time analysis of
\begin{equation}
\label{e:dne-intro-nonconvex} \rmJ_p (u'(t)) + \lmsbd \phi (u(t))
\ni 0 \ \ \text{in $\Banach'$}\quad \forae\, t \in (0,T)\,,
 \end{equation}
the \emph{limiting subdifferential} $\lmsbd \phi $ of  $\phi$ being
a suitably generalized gradient notion, tailored for nonconvex
functionals, cf.~\cite{Mordukhovich84, Mordukhovich06,
Rossi-Savare04}.
 We shall prove that energy solutions in the sense of \eqref{eq:formu-p} yield
solutions to~\eqref{e:dne-intro-nonconvex} and in fact characterize
the solutions to~\eqref{e:dne-intro-nonconvex} arising from the
metric approach. Then, from our abstract results we  shall deduce that the semiflow of the energy
solutions to~\eqref{e:dne-intro-nonconvex} admits a global
attractor, thus extending to the doubly nonlinear framework the
results obtained in~\cite{Rossi-Segatti-Stefanelli08} in a Hilbert
setting, for the gradient flow case $p=2$.
Following~\cite{Rossi-Mielke-Savare08}, we shall further exploit the
flexibility of the metric approach to tackle
\emph{quasivariational} doubly nonlinear evolutions of the type
\[
\partial \Psi_{u'}(u(t),u'(t)) + \lmsbd \phi(u(t)) \ni 0  \qquad
\text{in $\Banach'$} \quad \forae\,t \in (0,T)\,,
\]
in which   the dissipation functional $\Psi $ depends on the unknown
function $u$. %%%%%%%%%%%%%%%
\paragraph{\bf Application to $p$-gradient flows in Wasserstein spaces.}
 In a series of pioneering papers~\cite{Otto96, Jordan-Kinderlehrer-Otto98,
 Otto98, Otto01}, \textsc{F. Otto} proposed  a novel variational interpretation for a wide class of
 diffusion equations of the form
 \begin{equation}
\label{e:wide-class}
\begin{gathered}
\partial_t \rho - \mathrm{div} \left(\rho \nabla \left( \frac{\delta \mathscr{L}}{\partial \rho}\right)
\right)=0 \qquad \text{in $\R^d \times (0,+\infty)$,}
\\
\text{ with}  \ \ \rho(x,t) \geq 0 \ \ \forae\,(x,t) \in \R^d \times
(0,+\infty),
\\
 \text{and}  \ \  \int_{\R^d} \rho(x,t) \dd x=1, \int_{\R^d}
|x|^2\rho(x,t) \dd x <+\infty \ \ \text{for all $t \in
(0,+\infty)$,}
\end{gathered}
 \end{equation}
where  ${\delta \mathscr{L}}/{\partial \rho}$ is  the first
variation of an integral functional
 \[
\mathscr{L}(\rho)= \int_{\R^d} L(x,\rho(x),\nabla \rho(x)) \dd x
 \]
associated with a smooth Lagrangian $L: \R^d \times [0,+\infty)
\times \R^d \to \R$. Indeed, it was  shown that \eqref{e:wide-class}
can be interpreted as a gradient flow in the Wasserstein space
$\mathscr{P}_2 (\R^d)$ of the probability measures on $\R^d$ with
finite second moment, endowed with the $2$-Wasserstein distance (see
\cite{Villani1, Ambrosio05, Villani2}). In fact, Otto's formalism
paved the way to crucial developments in the study of equations of
the type~\eqref{e:wide-class}: in particular, mass-transportation
techniques  have turned out to be  key  tools for the study of the
asymptotic behavior of solutions.  While referring
to~\cite{Ambrosio05} for a thorough survey of results in this
direction, here we mention~\cite{Carrillo-McCann-Villani03,
Carrilloal}, investigating
 the   decay rates to equilibrium
of the solutions of the following drift-diffusion, nonlocal equation
\begin{equation}
\label{e:drift-intro}
\partial_t \rho-  \mathrm{div} \left(\rho \nabla\left( V + I'(\rho) + W\ast \rho\right)
\right)=0  \qquad \text{in $\R^d \times (0,+\infty)$,}
\end{equation}
where $V: \R^d \to \R$ is a confinement potential, $I: [0,+\infty)
\to \R$ a density of internal energy, $W:\R^d \to \R$ an interaction
potential and $\ast$ denotes the convolution product. On the other hand, the convergence  to
equilibrium for trajectories of the
 {\it doubly nonlinear  variant} of~\eqref{e:drift-intro} without the nonlocal term $W \ast
 \rho$, i.e.
\begin{equation}
\label{e:drift-intro-p-local}
\partial_t \rho-  \mathrm{div} \left(\rho j_{p'} \left(\nabla\left( V + I'(\rho) \right)
\right)\right)=
0 \qquad \text{in $\R^d \times (0,+\infty)$,}
\end{equation}
($j_{p'}(r):= |r|^{p' -2} r$), was studied in~\cite{Agueh2003}. Finally,
 the metric approach to the
analysis of equation~\eqref{e:drift-intro} was
systematically developed in~\cite{Ambrosio05}.

In this direction,
 in
Section~\ref{s:7} we shall tackle the long-time behavior of the
{\it nonlocal doubly nonlinear} drift-diffusion equation
\begin{equation}
\label{e:drift-intro-p-local-nuovo}
\partial_t \rho -\mathrm{div} \left(\rho j_{p'} \left(\nabla\left( V + I'(\rho) + W \ast \rho\right)
\right)\right)= 0 \qquad \text{in $\R^d \times (0,+\infty)$,}
\end{equation}
complementing the results of~\cite{Carrillo-McCann-Villani03,
Carrilloal, Agueh2003}.
 Indeed, reviewing the discussion in~\cite{Ambrosio05},
 we are going to show that
solutions to~\eqref{e:drift-intro-p-local-nuovo} can be obtained
from
 energy
solutions~\eqref{eq:formu-p} in the Wasserstein space
$(\mathscr{P}_p (\R^d), W_p)$ of the probability measures with
finite $p$-moment.  Hence, from our abstract results we shall deduce the existence of the global
attractor for the \emph{metric solutions}
to~\eqref{e:drift-intro-p-local-nuovo}.

\paragraph{\bf Application to  phase-change
 evolutions driven by mean curvature.} In
Sections~\ref{ss:3.2} and~\ref{ss:8.1} we shall
  consider the
Stefan-Gibbs-Thomson problem, modeling solid-liquid phase
transitions obeying the Gibbs-Thomson law for the temperature
evolution at the phase interface.
 The gradient flow structure of this problem was
first revealed in the paper~\cite{Luckhaus90} (see
also~\cite{Visintin96}). Therein, the existence of solutions was
proved by passing to the limit in the Minimizing Movements scheme.
The related energy functional does not fulfill the chain rule, which calls
for the notion of generalized solutions. By relying on
 our abstract theory, in Theorem~\ref{e:coro-rest-points} we shall
prove that the semiflow of the generalized solutions of the
Stefan-Gibbs-Thomson problem possesses a global attractor.

 Finally, in Section~\ref{ss:8.2}  we shall
 obtain some partial results for the generalized solutions
the Mullins-Sekerka flow~\cite{Luckhaus95, Roeger06}. However, we point out that, at the moment, the existence of the related global attractor is still open.

 %%%%%%%%%%%%
\paragraph{\bf Plan of the paper.}
In Section~\ref{ss:2.1}, we  specify the problem setup and recall
the basic notions on evolution in metric spaces which shall be used
throughout the paper. Hence, in Sections~\ref{ss:2.2}--\ref{ss:2.3} we
define the concepts of \emph{energy} and \emph{generalized}
solutions, and discuss  their relation.  In
Section~\ref{ss:cvx-gflows} we illustrate  energy solutions by addressing abstract doubly nonlinear equations in Banach spaces, whereas in Section~\ref{ss:3.2} we exemplify  the
concept of generalized solution via the Stefan-Gibbs-Thomson and
the Mullins-Sekerka flows. We state our main results
Theorem~\ref{teor:1}, \ref{thm:ene}, and~\ref{thm:3} on the
long-time behavior of generalized and energy solutions in
Section~\ref{s:4}, and  detail all proofs in Section \ref{s:5}.

 As for the application of our abstract results,  in Section~\ref{s:6} we study the connections between energy
solutions and doubly nonlinear equations driven by possibly
nonconvex functionals. In Section~\ref{s:7} we prove the
existence of the global attractor for the energy solutions of a
class of gradient flows in the Wasserstein space $(\mathscr{P}_p
(\R^d), W_p)$, while in Section~\ref{s:8}
generalized solutions are used to investigate the long-time behavior of the
Stefan-Gibbs-Thomson flow.

The Appendix is devoted to a concise presentation of the theory of
generalized semiflows by \textsc{J. M. Ball}, to the alternative
proof of a result in Section~\ref{ss:6.1}, and to some recaps on
maximal functions.

\paragraph{\bf Acknowledgments.} We would like to thank Giuseppe
Savar\'e for  many inspiring  and helpful discussions.

%%%%%%%%%%%%%%%%%%%
%%%%%%%%%%%%%%%%%%%%%%%%%%
%%%%%%%%%%%%%%%%%%%%%%%%%

\parskip1mm

%%%%%%%%%%%%%%%%%%%%%%%%%%
%%%%%%%%%%%%%%%%%%%%%%%%%

%%%%%%%%%%%%%%%%%%%%%%%%%%%section

\section{Solution notions}
\label{s:2} \noindent In this section, we  introduce the two notions
of gradient flows in metric spaces which we shall consider in the
paper, see Definitions~\ref{def:ly-en} and~\ref{generalized} below.
Preliminarily, for the reader's convenience we
 briefly recall the  tools from analysis in metric spaces on which such
definitions rely,  referring   to~\cite{Ambrosio05}
 for a systematic treatment of
these issues.
\subsection{Problem setup} \setcounter{equation}{0}
\label{ss:2.1}
\paragraph{\bf The ambient space.} Throughout the paper
\begin{equation}
\label{e:setup} \begin{gathered} \text{ $(U,d)$ shall denote a
metric space, $\sigma$ a Hausdorff topology on $U$,  and}
\\
\text{
 $\phi : U \to (-\infty,+ \infty]$
a proper functional.}
\end{gathered}
\end{equation}
The symbol $ \weak $  shall stand for  the convergence in the
topology $ \sigma$,
 the metric $d$-convergence being denoted by $ \to $ instead.
We  shall precisely state the links between the $d$- and the
$\sigma$-topology in Section~\ref{ss:4.1}. However, to fix ideas one
may think that, in a Banach space setting, $d$ is the distance
induced by the norm and $\sigma$ is the  weak/weak$^*$ topology.

\paragraph{\bf Absolutely continuous curves and metric
derivative.} We say that a  curve $u: [0,T]\to U$ belongs to
$\AC^p([0,T];U)$,  $p\in [1,\infty]$,  if there exists $m\in
L^p(0,T)$ such that
\begin{equation}\label{metric_dev}
d(u(s),u(t))\leq \int_s^t m(r)\dd r \quad \text{for all $0 \leq
s\leq t \leq T.$}
\end{equation}
For $p=1$, we simply write $\AC([0,T];U)$ and refer to {\it
absolutely continuous curves}. A remarkable fact is that, for all $u
\in\AC^p([0,T];U) $, the limit
$$|u'|(t) = \lim_{s\to t}\frac{d(u(s),u(t))}{|t-s|}$$
exists for a.a. $t\in (0,T)$. We will refer to it as the {\it metric
derivative} of $v$ at $t$. We have that the map
 $t \mapsto|v'|(t)$  belongs to $ L^p(0,T) $
and it is minimal within the class of functions $m\in L^p(0,T)$
fulfilling \eqref{metric_dev}, see \cite[Sec. 1.1]{Ambrosio05}.

\paragraph{\bf The local and the strong relaxed slope.}
Let
$D(\phi):=\{u \in \U \ : \ \phi(u) < +\infty\}$
denote the effective domain of $ \phi $.
We define the {\it local slope} (see \cite{Ambrosio05,Cheeger99,DeGiorgi80}) of $ \phi $ at $ u \in D(\phi) $ as
$$
\ls(u) := \limsup_{v \to u} \frac{(\phi(u) - \phi(v))^+}{d(u,v)}.$$
\begin{remark}
\label{rmk:surrogate} \upshape  The local slope
 is a surrogate of the norm of $\nabla \phi$,  for  it can be
shown that, if $\U$ is a Banach space $\Banach$ with norm $\| \cdot
\|$ and $\phi: \Banach \to (-\infty,+\infty]$ is (Fr\'echet)
differentiable at $u \in D(\phi)$,  then $\ls(u) = \| {-}\nabla \phi
(u) \|_{*}.$
\end{remark}
The function $u \mapsto \ls(u)$ cannot be expected to be lower
semicontinuous. On the other hand, lower semicontinuity is
 desirable
 within limiting procedures.
For the purposes of the present long-time analysis, we shall
 deal with the following relaxation notion
\begin{equation}
\label{slope1}
 \rsls(u) :=\inf\big\{\liminf_{n\to +\infty}\ls(u_n)\ :  \ u_n \weak u,  \  \phi(u_n) \to \phi(u)
 \},\quad \text{for $u \in D(\phi)$.}
\end{equation}
We refer to $\rsls$ as the {\em strong relaxed slope} and remark that this relaxation is slightly stronger than the corresponding notion considered in \cite[Def. 2.3.1]{Ambrosio05} (where the approximating points $u_n$ are additionally required to belong to a $d$-ball but only have bounded energy $\phi$, possibly non converging.).

\paragraph{\bf A weaker relaxation for the local slope.}
 In the following we
shall make use of a second and weaker relaxation of the local slope.
This second notion  brings into play the limit of the energy $\phi$
along the approximating sequences $\{u_n\}$, and to this aim an
auxiliary variable is introduced. In particular, let the set
$$
{\mathcal X}:= \left\{(u,\varphi) \in D(\phi) \times \R : \ \varphi\geq
\phi(u)\right\}
$$
be given  and  define
\begin{equation}
\label{slope2}
 \rls(u,\varphi) :=\inf\big\{\liminf_{n\to +\infty}\ls(u_n)\ :  \ u_n \weak u, \  \phi(u_n) \to
 \varphi\geq \phi(u)
 \}
\end{equation}
for $(u,\varphi)\in {\mathcal X}$.

As soon as some compactness is assumed (see \eqref{A5} below), $\rls$
turns out to be  lower
semicontinuous with respect to both its arguments, i.e. for any sequence
$\{(u_n,\varphi_n)\} \subset {\mathcal X}$,
\begin{equation}
\label{double-lsc} (u_n \weak u, \quad \varphi_n \to \varphi) \
\Rightarrow \ \liminf_{n\to +\infty}\rls(u_n, \varphi_n) \geq
\rls(u, \varphi).
\end{equation}
Moreover, we clearly have that $\rsls(u) \geq \rls(u,\varphi)$ for
all $(u,\varphi)  \in {\mathcal X},$ and
\begin{align}
\label{disu-di-base} & \rsls(u)=\rls(u,\phi(u)) %\geq
%|\partial^\sharp\phi|(u)
 \quad \forall u \in D(\phi).
\end{align}
%%%%%%%%%%%%%%%%%%%%%%%%
%%%%%%%%%%%%%%%%%%%%%%%%%%
%%%%%%%%%%%%%%%%%%%%%%%%%%%%%
\paragraph{\bf Strong upper gradient.}
By slightly strengthening \cite[Def.~1.2.1]{Ambrosio05}, we say that
a function
 $ g : \U \to [0,+\infty]$ is a  {\it strong upper gradient} for
  the functional $ \phi $ if, for every curve $ u \in \AC_\loc([0,+\infty);\U)$,
  the function $ g \circ u $ is Borel and
\begin{equation}
\label{e:2.3} |\phi(u(t)) - \phi(u(s))| \leq \int_s^t g(u(r))
|u'|(r)\dd r \quad \text{for all $ 0 \leq s\leq t$}
\end{equation}
(the original \cite[Def.~1.2.1]{Ambrosio05} requires \eqref{e:2.3}
for $s>0$ only). Let us explicitly observe that, whenever $ (g \circ
u) |u'| \in L^1_\loc([0,+\infty))$,  then $ \phi \circ u  \in
W_\loc^{1,1} ([0,+\infty))$ and
$$
 |(\phi \circ u)'(t)| \leq g(u(t)) |u'|(t) \quad
\forae t \in (0, + \infty). %\end{equation}
$$
 \paragraph{\bf Curves of Maximal Slope.}
 Let $g :\U \to [0,+\infty]$ be a {\em strong upper gradient} and $p \in (1,\infty)$.
 We recall (see~\cite[Def.~1.3.2,~p.32]{Ambrosio05}, following~\cite{DeGiorgi80,Ambrosio95}) that
 a  curve  $ u \in \AC^p_\loc([0,+\infty);\U)$ is said to be a {\it p-curve of
maximal slope} for the functional $ \phi $ with respect to the
strong upper gradient $ g$ if
 \begin{equation}
 \label{e:differential-equality}
   -(\phi\circ u)'(t) = |u'|^p (t)= g^{p'}(u(t))
   \qquad \forae t \in (0,+
   \infty),
\end{equation}
$p'$ denoting the conjugate exponent of $p$. In particular, $\phi
\circ u$ is locally absolutely continuous in $[0,+\infty)$, $g \circ
u \in L^{p'}_\loc([0,+\infty))$, and the energy identity
  \begin{equation}
  \label{abstra-enide}
  \frac1p \int_s^t |u'|^p (r)\dd r + \frac1{p'}\int_s^t g^{p'}(u(r))\dd r + \phi(u(t)) =
  \phi(u(s))  \qquad \text{for all $0 \leq s\leq t,$}
\end{equation} directly follows.

 We are now in the position of
defining the two solution notions we shall be dealing with in the
sequel. Let us start from the strongest one.
 %%%%%%%%%%%%%%%%%%%%%%%%5
 \subsection{Energy solutions}
\label{ss:2.2}
%%%%%%%%%%%%%%%%%%%
%%%%%%%%%%%%%%%%
\begin{definition}[Energy solution]
\label{def:ly-en}
 Assume
\[
\text{$\rsls$ is a strong upper gradient.}
\]
 We call
{\em energy solution
 of
the
 metric $p$-gradient flow  of the functional $\phi$} (or,
 simply, \emph{energy solution}),
a $p$-curve of maximal slope for the functional $\phi$, with respect
to the strong upper gradient  $\rsls$.\end{definition} In
particular, an energy solution $u$ is such that $ u \in
\AC^p_\loc([0,+\infty);\U)$, $\phi \circ u $ is locally absolutely
continuous on $[0,+\infty)$, the map $t\mapsto \rsls(u(t))$
 is in $L^{p'}_\loc([0,+\infty))$,  and
\begin{equation}
\label{maximal-slope-weak-tris} (\phi \circ u)'(t) + \frac1p
|u'|^p(t) +\frac1{p'}  \rsls^{p'}(u(t)) =0 \qquad \forae t \in
(0,+\infty).
\end{equation}
\begin{remark}
\upshape \label{rem:max-slope}
%Notice that, in the gradient flow
%case $p=2$ and when  the strong relaxed slope coincides with
%local slope, recalling Remark~\ref{rmk:surrogate} it is immediate to
%see that \eqref{maximal-slope-weak-tris} is
% the ``metric version" of~\eqref{e:1.2}.
% and (hence
%the local slope is lower semicontinuous and fulfils the chain rule
%inequality
%\begin{equation}
%\label{e:chain-rule-intro} \frac{\ddd}{\ddd t} (\phi \circ v) (t)
%\leq |v'|(t) \ls(v (t)) \qquad \forae\, t \in (0,T)\,,
%\end{equation}
%..), taking into account that the local slope is a surrogate of the
%modulus of the gradient DIRLO????,
We point out that, being $\rsls$ a strong upper
 gradient, \eqref{maximal-slope-weak-tris} is in fact equivalent to
 the (integrated) \emph{energy inequality}
\begin{equation}
\label{e:1.1.3-bis}
 \frac1p \int_s^t |u'|^p (r)\dd r + \frac1{p'}\int_s^t \rsls^{p'}(u(r))\dd r + \phi(u(t))
 \leq
  \phi(u(s))  \qquad \text{for all $0 \leq s\leq t.$}
\end{equation}
  %in view of the chain rule inequality
%then \eqref{e:1.1.3} may be equivalently reformulated as an
%inequality
%It turns out that the chain rule~\eqref{e:chain-rule-intro} is a
%natural assumption within the framework of curves of maximal slope.
%\begin{rcomm}  fare link a qualche commento successivo
%\end{rcomm} Hence, when dealing with the notion of energy solutions, we
%shall always require that $\phi$ complies with the chain rule
%inequality w.r.t. to $\rsls$.
%\end{rcomm}
\end{remark}
\paragraph{\bf  Geodesically convex functionals.}
A remarkable case in which  the local slope is a strong upper
gradient occurs when (cf.~\cite[Thm.~2.4.9]{Ambrosio05}) the
functional $\phi$ is $\lambda$-geodesically convex for some $\lambda
\in \R$, i.e.
\[
\text{for all $v_0,\, v_1 \in D(\phi)$ there exists a
 constant-speed  geodesic $\gamma:[0,1] \to \U$}
\]
(i.e. satisfying $d(\gamma_s,\gamma_t)= (t-s) d(\gamma_0,\gamma_1)$
for all $0 \leq s \leq t \leq 1$), such that
\begin{equation}
\label{def:l-geod-convex}
\begin{gathered}
\gamma_0=v_0, \ \ \gamma_1=v_1, \ \   \text{and $\phi$ is
$\lambda$-convex on $\gamma$, i.e.}
\\
\phi(\gamma_t) \leq (1-t) \phi(\gamma_0) + t \phi(\gamma_1)
-\frac{\lambda}{2}t (1-t) d^2(\gamma_0,\gamma_1) \ \ \text{for all
$0 \leq t \leq 1$.}
\end{gathered}
\end{equation}
\begin{remark}[$\lambda$-geodesic convexity in Banach spaces]
\upshape \label{rmk:disc2} When $\U$ is a Banach space $\Banach $
with norm $\| \cdot\|$, $\lambda$-geodesic convexity reduces to the
usual notion of $\lambda$-convexity, i.e.
\begin{equation}
\label{e:lambda-convex}
\begin{aligned}
& \exists\lambda \in \R  \quad \forall\, u_0,\,u_1 \in \Banach \quad
\forall\, \theta \in [0,1]\, : \\ & \phi((1-\theta) u_0 +\theta u_1)
\leq (1-\theta)\phi(u_0) + \theta\phi(u_1) -\frac12 \lambda \theta
(1-\theta) \| u_0-u_1\|^2\,,
\end{aligned}
\end{equation}
In particular, in the Hilbertian case $\phi$ is $\lambda$-convex if
and only if the map $v\mapsto \phi(v) - \frac{\lambda}2|v|^2$ is
convex.
\end{remark}

 As pointed out in~\cite[Rmk.~2.4.7]{Ambrosio05} (see also~\cite{Agueh2003}),
  in the case of
$p$-curves of maximal slope one should consider a generalized notion
of geodesic convexity, in which the modulus of convexity depends on
the $p$-power of the distance $d$.
%\begin{rcomm}
%Such a notion was considered in the paper by Agueh...
%\end{rcomm}
 Also in view of the applications
to $p$-gradient flows in Wasserstein spaces, we thus give the
following
\begin{definition}
\label{definition:l-p-geod-convex} Given $\lambda \in \R$ and $p \in
(1,\infty)$, we say that a functional $\phi: \U \to
(-\infty,+\infty]$ is $(\lambda,p)$-geodesically convex if
\begin{equation}
\label{def:l-p-geod-convex}
\begin{gathered}
 \text{for
all $v_0,\, v_1 \in D(\phi)$ there exists a constant speed geodesic
$\gamma:[0,1] \to \U$ such that} \\
 \gamma_0=v_0, \ \ \gamma_1=v_1, \ \
\text{and $\phi$ is $\lambda$-convex on $\gamma$, i.e.}
\\
\phi(\gamma_t) \leq (1-t) \phi(\gamma_0) + t \phi(\gamma_1)
-\frac{\lambda}{p}t (1-t) d^p(\gamma_0,\gamma_1) \ \ \text{for all
$0 \leq t \leq 1$.}
\end{gathered}
\end{equation}
\end{definition}
\begin{remark}[$\lambda$- versus $(\lambda,p)$-convexity]
\label{rem:discussion} \upshape Clearly, the notion of
$(\lambda,p)$-geodesic convexity reduces to
\emph{$(\lambda,p)$-convexity} in a Banach framework, i.e.
\begin{equation}
\label{p-conv-Ban}
\begin{aligned}
& \exists\lambda \in \R  \quad \forall\, u_0,\,u_1 \in \Banach \quad
\forall\, \theta \in [0,1]\, : \\ & \phi((1-\theta) u_0 +\theta u_1)
\leq (1-\theta)\phi(u_0) + \theta\phi(u_1) -\frac1p \lambda \theta
(1-\theta) \| u_0-u_1\|^p\,,
\end{aligned}
\end{equation}
Hereafter, in a metric framework we shall always speak of
$(\lambda,p)$-geodesic convexity, and refer to
condition~\eqref{def:l-geod-convex} as $(\lambda,2)$-geodesic
convexity. Instead, in a Banach context we shall simply call
condition~\eqref{e:lambda-convex} $\lambda$-convexity, and speak of
$(\lambda,p)$-convexity only for $p \neq 2$.
\end{remark}

The following result  extends~\cite[Cor.~2.4.10, Lemma~2.4.13,
Thm.~2.4.9]{Ambrosio05}  to the case of $(\lambda,p)$-geodesically
convex functionals.

\begin{proposition}
\label{prop:l-p-geod} Let $\phi: \U \to (-\infty,+\infty]$ be
$d$-lower semicontinuous and  $(\lambda,p)$-geo\-de\-si\-cal\-ly
convex for some $\lambda \in \R$ and $p \in (1,\infty)$.
\begin{enumerate}
\item
Then, the local slope $\ls$ is $d$-lower semicontinuous and  admits
the representation
\begin{equation}
\label{repr-loc-slope} \ls(u)= \sup_{v \neq u} \left( \frac{\phi(u)
- \phi(v)}{d(u,v)} + \frac{\lambda}p d^{p-1}(u,v)\right)^+ \quad
\text{for all $u \in D(\phi)$.}
\end{equation}
Furthermore,  $\ls$ is  a strong upper gradient.
\item Suppose further
that
\[
\text{the $(\lambda,p)$-geodesic convexity
condition~\eqref{def:l-p-geod-convex} holds with $\lambda >0$.}
\]
Then, the following estimate holds
\[
\phi(u) - \inf_{\U} \phi
 \leq
\frac{1}{\lambda p'} \rsls^{p'}(u) \leq \frac{1}{\lambda p'}
\ls^{p'}(u)  \quad \text{for all $u \in D(\phi)$.}
 \]
Moreover, if $\bar{u} \in D(\phi)$ is the unique minimizer of
$\phi$, then
\begin{equation}
\label{e:key-estimate} \frac{\lambda}{p} d^p(u,\bar{u}) \leq \phi(u)
- \phi(\bar{u}) \leq \frac{1}{\lambda p'} \rsls^{p'}(u) \leq
\frac{1}{\lambda p'} \ls^{p'}(u)   \quad \text{for all $u \in
D(\phi)$.}
\end{equation}
\end{enumerate}
\end{proposition}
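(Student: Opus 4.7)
The entire proposition hinges on establishing the representation formula \eqref{repr-loc-slope}; once it is available, both the lower semicontinuity and the strong upper gradient property of $\ls$ in part (1), as well as the coercive estimates of part (2), follow by fairly direct arguments.

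To establish \eqref{repr-loc-slope}, I would fix $u,v\in D(\phi)$ with $v\neq u$, take a constant-speed geodesic $\gamma:[0,1]\to\U$ realizing \eqref{def:l-p-geod-convex} with $\gamma_0=u$, $\gamma_1=v$, and rearrange the convexity inequality to obtain, using $d(u,\gamma_t)=t\,d(u,v)$,
\[
\frac{\phi(u)-\phi(\gamma_t)}{d(u,\gamma_t)} \;\geq\; \frac{\phi(u)-\phi(v)}{d(u,v)} + \frac{\lambda}{p}(1-t)\,d^{p-1}(u,v) \quad \text{for all }t\in(0,1].
\]
Passing $t\downarrow 0^+$ and taking positive parts yields $\geq$ in \eqref{repr-loc-slope}; the reverse inequality follows by letting $v\to u$ in the supremum, since the correction $\tfrac{\lambda}{p}d^{p-1}(u,v)\to 0$ (as $p>1$) and the remaining ratio reduces to $\ls(u)$ by definition. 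The $d$-lower semicontinuity of $\ls$ is then immediate, as \eqref{repr-loc-slope} represents it as a supremum of $d$-lsc functions of $u$ (using the $d$-lsc of $\phi$, continuity of $d$, and preservation of lsc under sums and positive parts).

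For the strong upper gradient property, I would apply \eqref{repr-loc-slope} with $u$ and $v$ interchanged to derive the symmetric two-sided estimate
\[
|\phi(u)-\phi(v)| \;\leq\; d(u,v)\,\max\{\ls(u),\ls(v)\} + \tfrac{|\lambda|}{p}\,d^p(u,v),
\]
and then run the standard partition argument on an AC curve $u\in\AC^p_\loc([0,+\infty);\U)$: telescoping along a fine partition $s=t_0<\cdots<t_N=t$, using $d(u(t_i),u(t_{i+1}))\leq\int_{t_i}^{t_{i+1}}|u'|(r)\,\dd r$ together with $|u'|\in L^p_\loc$, and carrying out a Lebesgue-point argument for the measurable map $r\mapsto\ls(u(r))$, yields the absolute continuity of $\phi\circ u$ and the inequality \eqref{e:2.3}; the $O(d^p)$ correction is of higher order and vanishes in the mesh-refinement limit. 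This parallels \cite[Thm.~2.4.9]{Ambrosio05} adapted to a general exponent $p$.

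Part (2) reduces to short computations. The lower estimate $\tfrac{\lambda}{p}d^p(u,\bar u)\leq\phi(u)-\phi(\bar u)$ is obtained by evaluating \eqref{def:l-p-geod-convex} along a geodesic from $u$ to $\bar u$, invoking $\phi(\gamma_t)\geq\phi(\bar u)$, and letting $t\uparrow 1$. For the upper estimate, \eqref{repr-loc-slope} yields $\phi(u)-\phi(v)\leq d(u,v)\,\ls(u)-\tfrac{\lambda}{p}\,d^p(u,v)$ for every $v$; maximizing the right-hand side over $r=d(u,v)\geq 0$ (equivalently, applying Young's inequality with conjugate exponents $p,p'$ to the product $d(u,v)\,\ls(u)$) produces the announced coercive bound in terms of $\ls^{p'}(u)$, which for $p=2$ recovers \cite[Cor.~2.4.10]{Ambrosio05}; taking the supremum over $v$ on the left-hand side replaces $\phi(v)$ by $\inf_\U\phi$. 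The transfer from $\ls$ to $\rsls$ is automatic: for any sequence $u_n\weak u$ with $\phi(u_n)\to\phi(u)$, apply the bound to each $u_n$, pass to the liminf (using $\phi(u_n)\to\phi(u)$ on the left), and invoke the definition \eqref{slope1}. I expect the main obstacle to be the partition/Lebesgue-point step in the strong upper gradient proof, where measurability of $\ls\circ u$ and uniform control of the $d^p$ correction along arbitrary refinements must be handled carefully.
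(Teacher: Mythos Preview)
Your approach is correct and is precisely what the paper intends: it explicitly declines to give a detailed proof and instead refers to adapting \cite[Cor.~2.4.10, Lemma~2.4.13, Thm.~2.4.9]{Ambrosio05} to the $(\lambda,p)$-geodesically convex setting, which is exactly the route you outline (geodesic inequality $\Rightarrow$ representation formula $\Rightarrow$ lower semicontinuity and upper-gradient property via partition/telescoping, then Young's inequality for the coercive estimates). One small caveat: when you carry out the optimization in part~(2), the maximization of $r\mapsto r\,\ls(u)-\tfrac{\lambda}{p}r^p$ actually yields the constant $\tfrac{1}{p'\lambda^{p'-1}}$ rather than $\tfrac{1}{\lambda p'}$; the two agree only for $p=2$, so either be explicit about the constant you obtain or note that the stated constant appears to be a slip.
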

\noindent The \emph{proof}, which we choose not to detail,  follows
from carefully adapting the arguments for~\cite[Cor.~2.4.10,
Lemma~2.4.13, Thm.~2.4.9]{Ambrosio05}
 to the general $(\lambda,p)$-geodesically convex case.
Notice that, since the relaxed slope $\rsls$ is defined in terms of
the $\sigma$-topology, the $d$-lower semicontinuity of $\ls$ is not
sufficient to ensure that the local and strong relaxed slopes
coincide, cf. also with Remark~\ref{rem:yields}.
%%%%%%%%%%%%%%%%%%%%%%%
\subsection{Generalized solutions}
\label{ss:2.3}

 We shall  introduce   {\em generalized solutions}
by highlighting their connections to the notion of {
Minimizing Movements} due {\sc E.~De~Giorgi} \cite{DeGiorgi80,
DeGiorgi93}, cf. Definition~\ref{generalized} later on. In
particular, the discussion developed in the next lines  will show
that every  Generalized Minimizing Movement gives raise to a
generalized solution.
\subsubsection{\bf Heuristics for generalized solutions: the Minimizing
Movements approach} \label{sss:2.3.1}
 The natural way of proving the existence of
solutions to the Cauchy problem for~\eqref{maximal-slope-weak-tris}
is to approximate it  by  time-discretization.  Hence,
 let us consider a   partition of $[0,+\infty)$, which we
identify with the corresponding vector $\ttau=(\tau^1, \tau^2,
\tau^3,\dots) $ of strictly positive  time-steps.
 Note that we indicate with superscripts the elements of a
generic vector. In particular $\tau^j$ represents the $j$-th
component of the vector $\ttau$ (and not the $j$-th power of the
scalar $\tau$). Let $|\ttau|= \sup \tau^i$ be the diameter of the
partition which we ask to be finite, $t^0_\ttau=0$, and define
recursively
\begin{gather}
  t^i_\ttau:= t^{i-1}_\ttau +  \tau^i, \quad I^i_\ttau: =[t^{i-1}_\ttau, t^i_\ttau)
\quad \text{for} \ \  i\geq 1.\nonumber
\end{gather}

We shall now consider the  following  time-incremental minimization problem\linebreak
(see~\cite[Chap.~II]{Ambrosio05}).
\begin{problem}[Variational approximation scheme]
\label{pr:time-incremental} Given $\Utau^0:=u_0,$ find $\Utau^{n} $
such that
\begin{equation}
\label{eq:time-incremental} \Utau^n \in
 \argmin_{v \in U}\left\{
 \frac{d^p (v,\Utau^{n-1})}{p (\tau^i)^{p-1}} + \phi(v)
\right\} \quad \text{for all  $n \geq 1$}.
\end{equation}
\end{problem}
Under suitable lower semicontinuity and coercivity conditions on
$\phi$ (cf. with \eqref{COMP}% and \eqref{COMP} suitably adapted to
the framework of the $\sigma$-topology, see
assumptions~\eqref{A3}--\eqref{A5}  later on), one can verify that,
for all partitions $\ttau $ and $u_0\in D(\phi)$, Problem
\ref{pr:time-incremental} admits at least one
solution~$\{\Utau^n\}_{n \in \N}$. We then construct approximate
solutions by considering the left-continuous  piecewise constant
interpolant $\overline U_\ttau$ of the values $\{\Utau^n\}_{n\in
\N}$, i.e.
\begin{equation}
\label{pwc-interp}
 \overline U_\ttau(t):=\Utau^i \quad \text{for all} \ \  t \in I^i_\ttau \quad
  i \geq 1.
\end{equation}
 We shall also deal with the \emph{right-continuous} piecewise constant
interpolant $\underline U_\ttau$, defined by $\underline
U_\ttau(t):=\Utau^{i-1} $ for all $t \in I^i_\ttau$ and $i \geq 1$.

%%%%%%%%%%%%
\begin{definition}[\cite{DeGiorgi93}]
\label{def:dg} We say that a curve $u: [0,+\infty) \to U$ is a
\emph{Generalized Minimizing Movement} for $\phi$ starting from
$u_0$ if there exists a sequence $\{\ttau_k\}$, with $|\ttau_k|\to 0$,
such that
\begin{equation}
\label{convergenze}
 \overline U_{\ttau_{k}}(t) \weak u(t) \qquad \text{as $k \to \infty$}
 \quad \text{for all} \ \  t \geq 0.
\end{equation}
We denote by $\mathrm{GMM}(\phi;u_0)$ the set of Generalized
Minimizing Movements  with initial datum  $u_0$.
\end{definition}
In order to show that $\mathrm{GMM}(\phi;u_0)$ is non-empty, one
needs to prove some a priori estimates  on  the sequence
$\{\overline U_{\ttau_{k}}\} $. The crucial step in this direction
is to observe that approximate solutions satisfy at all nodes
$t^i_\ttau$ the following \emph{discrete energy identity}
\begin{equation}
\label{A}
\begin{aligned}
 \frac1p \int_{t^{i-1}_\ttau}^{t^i_\ttau}
\left(\frac{d\left( \overline U_{\ttau}(t), \underline
U_{\ttau}(t)\right)}{\ttau^i} \right)^p \dd t &  +  \frac1{p'}
\int_{t^{i-1}_\ttau}^{t^i_\ttau} \ls^{p'}\left(\tilde U_{\ttau}(t)
\right)\dd t\\ &  + \phi\left( \overline U_{\ttau}(t)\right)=
\phi\left( \underline U_{\ttau}(t)\right)\,,
\end{aligned}
\end{equation}
where $\underline U_{\ttau}$  and $\tilde U_{\ttau}$ respectively
denote the right-continuous
 piecewise constant and the \emph{De Giorgi variational}
 interpolants of the values
$\{\Utau^n\}_{n \in \N}$, see~\cite[Def.~3.2.1]{Ambrosio05} for the
latter notion. It is immediate to realize that \eqref{A} is the
discrete counterpart to~\eqref{maximal-slope-weak-tris}. Exploiting
 the discrete energy inequality~\eqref{A}  and the coercivity of $\phi$, in~\cite[Secs.~3.3.2
and 3.3.3]{Ambrosio05} suitable a priori estimates are obtained for
the approximate sequences $\{ \overline U_{\ttau}\} $ and $\{ \tilde
U_{\ttau}\}$,  and  convergence is shown   along a suitable
subsequence $\{ \ttau_k\}$ to some limit curve $u: [0,+\infty) \to
U$. Furthermore, \eqref{A} yields that the functions
$\varphi_{\ttau_k}(t):= \phi (\overline U_{\ttau_k}(t))$ form a
non-increasing sequence. A suitable generalization of Helly's
theorem (cf.~\cite[Lemma~3.3.3]{Ambrosio05}) gives that, up to the
extraction of a further subsequence,
\[
\exists\, \varphi(t):= \lim_{k \to \infty}  \phi(\overline
U_{\ttau_{k}}(t)) \geq
 \phi(u(t)) \quad \text{for all} \ \ t \geq 0,
\]
the latter inequality by the lower semicontinuity of $\phi$.
Altogether, passing to the limit by lower semicontinuity
in~\eqref{A} it is proved in~\cite{Ambrosio05} that
$\mathrm{GMM}(\phi;u_0) \neq \emptyset$ and that for every $u \in
\mathrm{GMM}(\phi;u_0) $ there exists a non-increasing function
$\varphi: [0,+\infty) \to \R$ such that
%\begin{equation}
%\label{generalized2}
\[
\begin{aligned}
   &
\begin{aligned}
   \frac12 \int_s^t |u'|^2(r) dr +
  \frac1{2}\int_s^t
\rls^{2} (u(r),\varphi(r))dr   +  \varphi(t)  \leq  \varphi(s) \ \
\text{for all $ 0 \leq s \leq t$,}
\end{aligned}
\\
 &\phi(u(t)) \leq \varphi(t)   \quad
 \text{for all $ t\geq 0$},
\end{aligned}
\]
%\end{equation}
where the weak relaxed slope $\rls$ naturally arises  by
 taking the $\liminf$  of the second integral term on the
left-hand side of~\eqref{A}.
%\paragraph{\bf Existence and approximation of {generalized solutions}.}
%%%%%%%%%%%%%%%%%%%%%%%%%%%%%%%%%%
\subsubsection{\bf Definition of generalized solution}
\label{sss:2.3.2} Motivated by the above discussion,  we give the
definition of generalized solution, tailored to include all limits
of the time-incremental approximations constructed
in~\eqref{eq:time-incremental}.
\begin{definition}[Generalized solution]
\label{generalized}
 A pair
 $ (u,\varphi) $, with $u
\in \AC_{\loc}^p([0,+\infty);U) $ and  $ \varphi: [0,+\infty) \to
\R,$ is a \emph{generalized solution  of the
 metric $p$-gradient flow  of  the functional~$\phi$}  (or,
 simply, a \emph{generalized solution}),
 if
\begin{align}
   &
   \label{1}
\begin{aligned}
   \frac1p \int_s^t |u'|^p(r) \dd r +
  \frac1{p'}\int_s^t
\rls^{p'} (u(r),\varphi(r))\dd r   +  \varphi(t)  \leq  \varphi(s) \
\ \text{for all $ 0 \leq s \leq t$,}
\end{aligned}
\\
 \label{2}
 &\phi(u(t)) \leq \varphi(t)   \quad
 \text{for all $ t\geq 0$}.
\end{align}
\end{definition}
\begin{remark}
\upshape \label{rem:prop-gen-sol}
 Notice that, if $ (u,\varphi) $ is a generalized solution,
then $\varphi$ is non-increasing and thus of finite pointwise
variation.
 Moreover, letting $\varphi_-(t)= \varphi(t-)$
  and $\varphi_+(t)= \varphi(t+)$ denote the pointwise left- and right-limits,
   respectively, we have that the pairs $(u,\varphi_-)$
    and $(u,\varphi_+)$ are generalized solutions as well.
\end{remark}
%\begin{rcomm}
%\begin{itemize}
%\item
%do' la definizione di generalized solution
%\item
%notice that, if $\phi$ complies with the conditional continuity and
%$\rsls$ with the chain rule, then $\varphi=\phi \circ u$ and
%generalized sols are energy sols
%\end{itemize}%
%\end{rcomm}
\paragraph{\bf From generalized to  energy solutions via conditional continuity.}
The main step
 to conclude that the
Minimizing Movements approach of Section~\ref{sss:2.3.1}  yields
energy solutions is to identify $\phi(u(t))$  as the limit of the
sequence $\{ \phi(\overline U_{\ttau_k}(t))\}$, i.e. to prove that
\begin{equation}
\label{toward-cont} \varphi(t) = \phi(u(t)) \qquad \text{for all $t
\geq 0$.}
\end{equation}
Once \eqref{toward-cont} is obtained, in view of
 equality~\eqref{disu-di-base}  one can replace  $\rls^{p'}
(u,\varphi)$, in the second integral term in~\eqref{1}, with the
strong relaxed slope $\rsls^{p'}(u)$ (see~\eqref{slope1}, \eqref{disu-di-base}). In  this
way, one obtains
  the energy inequality~\eqref{e:1.1.3-bis}, yielding
that $u$ is an energy solution, see Remark~\ref{rem:max-slope}.

Indeed, a sufficient condition for~\eqref{toward-cont} to hold
 is the following \emph{conditional continuity} requirement
\begin{equation}
\label{cond-cont}{\tag\textsc{CONT}}
 u_n \weak u, \ \ \sup_n\{\phi(u_n), \ls(u_n)\} < +\infty \ \ \Rightarrow \ \ \phi(u_n) \to
 \phi(u).
\end{equation}
%Note that \eqref{cond-cont} perfectly fits into the framework of the
%available estimates and convergences for the Minimizing Movement
%approximation scheme.
%\item
%   scrivo la disuguaglianza dell'energia limite con la $\rsls$
%  anzich\'e la $\ls$
%  \item infine, se assumo che la $\rsls$ verifichi la chain rule
%  inequality, concludo che la disuguaglianza dell'energia \`e di
%  fatto una identit\`a dell'energia
%  \item  ottengo  energy solutions
%  \end{itemize}
%\end{rcomm}
\begin{remark}[Links between $(\lambda,p)$-geodesic convexity  and  conditional
continuity.] \label{rem:yields}
 \upshape
In the case $\sigma$ is  the topology induced by $d$,  let $\phi: \U
\to (-\infty,+\infty]$ be a lower semicontinuous functional
complying with the $(\lambda,p)$-geodesic convexity
condition~\eqref{def:l-p-geod-convex} for some  constant $\lambda\in
\R$.  Then, the conditional continuity property~\eqref{cond-cont}
holds. Indeed, let $\{ u_n\}$ be a sequence as in \eqref{cond-cont}.
It follows from~\eqref{repr-loc-slope} that
\begin{equation}
\label{e:conse-repr-ls}
 \phi(u_n) - \phi(u) +\frac{\lambda}{p} d^p
(u_n,u) \leq \ls(u_n) d(u_n,u) \qquad \text{for all $n \in \N$,}
\end{equation}
whence
\[
\begin{aligned}
\phi(u) \leq \liminf_{n \to \infty} \phi(u_n) &\leq \limsup_{n \to
\infty} \phi(u_n) \\ & \leq
 \phi(u) + \limsup_{n \to \infty} \left( \ls(u_n) d(u_n,u) - \frac{\lambda}{p} d^p
(u_n,u)\right) \leq \phi(u)\,,
\end{aligned}
\]
the first inequality by lower semicontinuity, the third one by
\eqref{e:conse-repr-ls}, and the last one by the properties of the
sequence $\{u_n\}$, cf. with \eqref{cond-cont}.

It is immediate to see that the previous argument also works when
the $\sigma$- and $d$-topology do not coincide, provided that $\phi$
has the following property: along sequences with bounded energy,
$\sigma$-convergence implies $d$-convergence.
\end{remark}

%%%%%%%%%%%%%%%%%%%%%%%%%%%%%%%%%%%%%%%
\subsubsection{\bf Comparison between generalized and  energy solutions}
\noindent
 Under the conditional
continuity~\eqref{cond-cont}, generalized and energy solutions may
be compared. The first result in this direction is the following
\begin{lemma}%[Conditional continuity]
\label{comparison} Assume  that the  {conditional continuity
property}~\eqref{cond-cont} holds.
 Then,
\begin{equation}
\label{crucial}
\begin{gathered}
 \rls(u,\varphi)<+\infty\ \Rightarrow \
\varphi=\phi(u) \ \text{and}
\\
 \rls(u,\varphi)= \rls(u,\phi(u))=\rsls(u)
 \quad \forall (u,\varphi) \in {\mathcal X}.
 \end{gathered}
\end{equation}
\end{lemma}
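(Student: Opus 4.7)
The plan is to exploit the conditional continuity hypothesis \eqref{cond-cont} to force any sequence realizing a finite value of the weak relaxed slope $\rls(u,\varphi)$ to satisfy $\phi(u_n)\to\phi(u)$, which will pin down $\varphi=\phi(u)$. Once that is established, the identity with $\rsls(u)$ will be immediate from the already recorded relation \eqref{disu-di-base}.

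More precisely, suppose $(u,\varphi)\in\mathcal{X}$ with $\rls(u,\varphi)<+\infty$. By the very definition \eqref{slope2} of $\rls$ as an infimum, there exists a sequence $\{u_n\}\subset D(\phi)$ with $u_n\weak u$, $\phi(u_n)\to\varphi$, and
\[
\liminf_{n\to\infty}\ls(u_n)\leq \rls(u,\varphi)+1<+\infty.
\]
Extract a (not relabeled) subsequence along which $\ls(u_n)$ stays bounded; since $\phi(u_n)\to\varphi\in\R$, also $\phi(u_n)$ is bounded. Invoking \eqref{cond-cont} along this subsequence gives $\phi(u_n)\to\phi(u)$, and comparing with the convergence to $\varphi$ forces $\varphi=\phi(u)$, which is the first assertion in \eqref{crucial}.

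For the second assertion, having $\varphi=\phi(u)$ yields trivially $\rls(u,\varphi)=\rls(u,\phi(u))$. The equality $\rls(u,\phi(u))=\rsls(u)$ is then exactly the identity \eqref{disu-di-base}, so the chain
\[
\rls(u,\varphi)=\rls(u,\phi(u))=\rsls(u)
\]
holds and the lemma is proved.

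I do not expect a genuine obstacle here: the argument is essentially a one-line consequence of the definitions together with \eqref{cond-cont}. The only subtlety worth a moment of care is that the infimum in \eqref{slope2} need not be attained, which is why the argument proceeds with an \emph{almost} minimizing sequence (or, as above, simply any sequence realizing a finite $\liminf$ of the local slope); boundedness of $\ls(u_n)$ along a suitable subsequence is enough to trigger \eqref{cond-cont}, and no minimization over competing sequences is required.
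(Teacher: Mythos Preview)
Your proof is correct and follows essentially the same approach as the paper: pick a sequence realizing a finite value of the weak relaxed slope, use boundedness of $\ls(u_n)$ (along a subsequence) and of $\phi(u_n)$ to trigger \eqref{cond-cont}, conclude $\varphi=\phi(u)$, and then invoke \eqref{disu-di-base}. The only cosmetic difference is that the paper works with an $\eps$-almost minimizing sequence while you take any sequence with finite $\liminf$ and extract a bounded subsequence; both are equivalent here.
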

\begin{proof}
It follows from the definition~\eqref{slope2} of $\rls(u,\varphi)$
that for all $\eps
>0$ there exists a sequence $\{u_n\}$ with $u_n \weak u$, $\phi(u_n)
\to \varphi$, and $\ls (u_n) \leq \rls(u,\varphi)+\eps $.
Hence,~\eqref{cond-cont} yields that $\varphi=\phi(u)$,
and~\eqref{crucial} follows.
\end{proof}
\noindent  An immediate consequence of Lemma \ref{comparison} is the
following
\begin{proposition}[$\varphi=\phi \circ u$]
\label{comp-sol} Let \eqref{cond-cont}
 hold. Then, for any generalized solution $(u,\varphi)$ we have
$\varphi=\phi \circ u $ almost everywhere in $(0,+\infty)$  and the
following energy inequality holds for all $t \geq 0$, for a.a. $ s
\in (0,t)$
\begin{equation}
\label{en-ineq-with-cont}
  \frac1p \int_s^t |u'|^p(r) \dd r +
  \frac1{p'} \int_s^t
\rsls^{p'} (u(r)) \dd r +  \phi(u(t))  \leq  \phi(u(s))\,.
\end{equation}
\end{proposition}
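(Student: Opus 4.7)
The plan is to exploit Lemma \ref{comparison} pointwise in time, using the integrability provided by the energy inequality \eqref{1}.

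First, I would observe that, since $(u,\varphi)$ is a generalized solution, \eqref{1} implies in particular that $\varphi(t) \leq \varphi(0) < +\infty$ for all $t\geq 0$, and that the map $r \mapsto \rls^{p'}(u(r),\varphi(r))$ belongs to $L^1_{\loc}([0,+\infty))$. Consequently, the set
\[
N := \{r \in (0,+\infty) : \rls(u(r),\varphi(r)) = +\infty\}
\]
has Lebesgue measure zero. For every $r \in (0,+\infty)\setminus N$, Lemma \ref{comparison} applies and yields both $\varphi(r) = \phi(u(r))$ and $\rls(u(r),\varphi(r)) = \rsls(u(r))$. This already gives the identification $\varphi = \phi\circ u$ a.e.\ in $(0,+\infty)$, and allows us to rewrite the second integrand in \eqref{1} as $\rsls^{p'}(u(r))$.

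Next, to obtain \eqref{en-ineq-with-cont}, I would fix $t\geq 0$ arbitrary and pick any $s\in (0,t)\setminus N$. By the previous step, $\varphi(s) = \phi(u(s))$ and the second integral in \eqref{1} coincides with $\int_s^t \rsls^{p'}(u(r))\dd r$. Combining this with the domination $\phi(u(t)) \leq \varphi(t)$ from \eqref{2}, inequality \eqref{1} becomes
\[
\frac{1}{p}\int_s^t |u'|^p(r)\dd r + \frac{1}{p'}\int_s^t \rsls^{p'}(u(r))\dd r + \phi(u(t)) \leq \frac{1}{p}\int_s^t |u'|^p(r)\dd r + \frac{1}{p'}\int_s^t \rsls^{p'}(u(r))\dd r + \varphi(t) \leq \varphi(s) = \phi(u(s)),
\]
which is precisely \eqref{en-ineq-with-cont}. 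Since the set of admissible $s$ has full measure in $(0,t)$, the claim follows.

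I do not expect any real obstacle here: the whole argument is a transcription of Lemma \ref{comparison} at a.e.\ time, combined with the monotone domination $\phi(u(t))\leq\varphi(t)$. The only subtle point is that the identification $\varphi=\phi\circ u$ is only pointwise a.e., so in the final inequality one cannot replace $\varphi(t)$ by $\phi(u(t))$ for \emph{every} $t$; however, the direction of the inequality $\phi(u(t))\leq\varphi(t)$ is exactly the one needed to deduce \eqref{en-ineq-with-cont} at \emph{every} $t\geq 0$, with $s$ restricted to a full-measure subset of $(0,t)$.
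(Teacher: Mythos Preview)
Your argument is correct and is exactly the approach the paper has in mind: the proposition is stated there as an immediate consequence of Lemma~\ref{comparison}, and you have simply spelled out the details—using the $L^1_{\loc}$ integrability from \eqref{1} to get $\rls(u(r),\varphi(r))<+\infty$ a.e., invoking Lemma~\ref{comparison} pointwise, and then combining with \eqref{2} to handle the endpoint $t$.
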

\noindent We thus conclude
\begin{proposition}[Comparison between generalized and energy solutions]\label{comp-sol2}
Given any {energy solution} $u$, the pair $(u, \phi \circ u)$ is a
{generalized solution}.

Conversely, assume  \eqref{cond-cont}, let $\rsls$ be a {strong
upper gradient} for the functional $\phi$, and $(u,\varphi)$ be a
generalized solution. Then, $u$ is an energy solution and $ \varphi
= \phi \circ u$.
\end{proposition}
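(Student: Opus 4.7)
The first implication is an immediate bookkeeping check. If $u$ is an energy solution, then $u \in \AC_\loc^p([0,+\infty);\U)$, $\phi\circ u$ is locally absolutely continuous, and the energy inequality \eqref{e:1.1.3-bis} holds. I would then set $\varphi := \phi\circ u$: condition \eqref{2} is satisfied with equality, while by identity \eqref{disu-di-base} one has $\rls(u(r),\varphi(r))=\rls(u(r),\phi(u(r)))=\rsls(u(r))$, so that \eqref{1} reduces exactly to \eqref{e:1.1.3-bis}.

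For the converse, my plan is to combine Proposition~\ref{comp-sol} with the classical Young-plus-upper-gradient sandwich. First, Proposition~\ref{comp-sol}, whose hypothesis is precisely \eqref{cond-cont}, yields $\varphi=\phi\circ u$ a.e.\ in $(0,+\infty)$ together with the energy inequality \eqref{en-ineq-with-cont}. The finiteness of the integrals in \eqref{1} provides $|u'|\in L^p_\loc$ and $\rsls(u)\in L^{p'}_\loc$ (the latter after replacing $\rls(u,\varphi)$ by $\rsls(u)$ via Lemma~\ref{comparison}). H\"older then gives $\rsls(u)\,|u'|\in L^1_\loc$, so the strong upper gradient hypothesis forces $\phi\circ u\in W^{1,1}_\loc$, in particular continuous and non-increasing; this allows me to upgrade \eqref{en-ineq-with-cont} to every $0\le s\le t$. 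Coupling the upper gradient inequality with Young's inequality and the upgraded energy inequality, I obtain for all $0\le s\le t$
\[
\phi(u(s)) - \phi(u(t)) \leq \int_s^t \rsls(u(r))\,|u'|(r)\,\dd r \leq \tfrac{1}{p}\int_s^t |u'|^p(r)\,\dd r + \tfrac{1}{p'}\int_s^t \rsls^{p'}(u(r))\,\dd r \leq \phi(u(s)) - \phi(u(t)),
\]
which forces equality throughout. This is the energy identity \eqref{abstra-enide}, so $u$ is a $p$-curve of maximal slope with respect to $\rsls$, i.e.\ an energy solution.

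The only mild subtlety I anticipate is the upgrade from $\varphi=\phi\circ u$ a.e.\ to the pointwise identity stated in the proposition. Since $\varphi$ is non-increasing, while the step above shows that $\phi\circ u$ is continuous, the set on which $\varphi=\phi\circ u$ has full measure and is therefore dense; taking one-sided limits at an arbitrary $t\in[0,+\infty)$ along this set yields $\varphi(t-)=\varphi(t+)=\phi(u(t))$, and the monotonicity of $\varphi$ forces $\varphi\equiv\phi\circ u$ on all of $[0,+\infty)$. The sandwich itself is entirely standard in the theory of metric gradient flows; the genuinely nontrivial ingredient, packaged in the strong upper gradient assumption, is the chain rule for $\phi\circ u$, without which one could not close the loop between the energy inequality and the reverse inequality coming from Young.
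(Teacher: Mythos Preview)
Your proof is correct and follows essentially the same route as the paper: invoke Proposition~\ref{comp-sol} to get $\varphi=\phi\circ u$ a.e., extract the $L^{p'}_\loc$ bound on $\rsls(u)$ from the energy inequality, use H\"older and the strong upper gradient property to force $\phi\circ u$ into $W^{1,1}_\loc$, and then close the loop via the Young sandwich. You are in fact more explicit than the paper on two points the authors gloss over: the Young inequality that turns the energy inequality into an identity, and the monotonicity-plus-continuity argument that upgrades $\varphi=\phi\circ u$ from a.e.\ to pointwise. One harmless slip: ``non-increasing'' for $\phi\circ u$ does not follow from the strong upper gradient hypothesis alone (that only gives absolute continuity); it rather follows from the energy inequality once continuity is in hand. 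But you never actually use the monotonicity of $\phi\circ u$ in the remainder of the argument, so this does not affect correctness.
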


\begin{proof} The first part of the proposition is immediate.
As for the second one, use Proposition \ref{comp-sol} in order to
get that $\varphi=\phi \circ u $ almost everywhere in $(0,+\infty)$,
and replace the  energy inequality~\eqref{1}  with
$$
  \frac1p \int_s^t |u'|^p(r) \dd r +
  \frac1{p'} \int_s^t
\rsls^{p'} (u(r))\dd r +  \varphi(t)  \leq  \varphi(s) \quad
\text{for all $0 \leq s \leq t$.}
$$
Hence, as $r \mapsto \rsls (u(r))$ is in $L^{p'}_\loc([0,+\infty))$,
we have that $$\rsls(u) |u'| \in L^1_\loc([0,+\infty))$$ and $\phi
\circ u$ is locally absolutely continuous since $|(\phi \circ
u)'|\leq  \rsls(u)|u'| $ almost everywhere. In particular,
$\varphi(s) = \phi(u(s))$ for all $s\geq 0$. Finally,
inequality~\eqref{en-ineq-with-cont}  holds for all $0 \leq s \leq
t$,  and the converse inequality follows again from $|(\phi \circ
u)'|\leq \rsls(u)|u'| $.
\end{proof}

%%%%%%%%%%%%%%%
%%%%%%%%%%%%%
%%%%%%%%%%%%%%%%%
\section{Examples of energy and generalized solutions}
\label{s:3} \noindent
%\begin{rcomm}
%To illustrate these solution notions, we present some examples. USA
%ITEMIZE Concerning energy solutions, we shall consider
% dne
%equations in Banach spaces. Gradient flows in Wasserstein spaces
%also provide an example, but, in order not to overburden the paper
%with all the necessary related definitions,  we postpone them to
%Sec.??. Then we consider mean curvature driven evolutions: the
%Stefan-Gibbs problem provides an example without chain rule, hence
%we recur to generalized solutions, but the conditional  continuity
%holds. The interest for the fully general notion of generalized
%solutions is revealed by the example of the Mullins Sekerka flow: no
%cond. continuity!
%\end{rcomm}
\subsection{Doubly nonlinear equations in Banach spaces: the convex
case}
 \label{ss:cvx-gflows} \upshape Let $(\Banach,\| \cdot\|)$ be a
(separable) reflexive Banach space
 and suppose that
\[ \phi: \Banach \to (-\infty,+\infty] \ \ \text{is a proper, l.s.c.
and convex functional.}
\]
We now show  that   the notion of energy solution (of the
 metric $p$-gradient flow  of $\phi$),
  in the ambient metric space $ \U=\Banach $ (with $d(u,v)= \| v-u\|$ and  $\sigma $ the strong
 topology),
  relates to the doubly nonlinear equation
 \begin{equation}
\label{e:dne-B} \rmJ_p (u'(t)) + \partial \phi(u(t)) \ni 0 \qquad
\text{in $\Banach'$} \quad \forae\, t \in (0,T)\,,
 \end{equation}
where $\rmJ_p : \Banach \rightrightarrows \Banach'$ is the
$p$-duality map,
 defined by
 \begin{equation}
 \label{p-dual}
\xi \in \rmJ_p (v) \ \Leftrightarrow \ \langle \xi,v \rangle =
\|v\|^p=\|\xi\|_*^{p'}= \|v\|\|\xi\|_*\,,
 \end{equation}
 and $\partial \phi$ is the
 Fr\'echet subdifferential of $\phi$, defined at a point $u \in
 D(\phi)$ by
 \begin{equation}
 \label{def:frechet}
\xi \in \partial \phi(u) \ \ \Leftrightarrow \ \ \phi(v) -\phi(u)
\geq \langle \xi, v-u\rangle + o(\|v-u\|)  \ \ \text{as }v \to u.
 \end{equation}
(which,  in the present convex case,  in fact reduces to the
standard subdifferential of $\phi$ in the sense of convex analysis).

 The link  between the formulation of Definition~\ref{def:ly-en}
 and equation~\eqref{e:dne-B} is based on the following key fact: denoting
 \begin{equation}
\label{e:argmin} \begin{cases} \displaystyle &\partial^\circ
\phi(v):= \argmin \left\{\| \xi \|_{*}\, : \ \xi \in \partial
\phi(v) \right\}
\\
& \displaystyle \|\partial^\circ \phi(v)\|_*:= \min \left\{\| \xi
\|_{*}\, : \ \xi \in \partial \phi(v) \right\}
\end{cases}
\qquad \text{for $v \in D(\phi)$,}
 \end{equation}
 it was proved in~\cite[Prop.~1.4.4]{Ambrosio05} that
 \begin{equation}
 \label{e:link-slope}
 \ls(v) = \|\partial^\circ \phi(v)\|_* \qquad \text{for all } v \in D(\phi)\,.
 \end{equation}
  It now follows from \eqref{e:link-slope} and from
   the strong-weak closedness of the graph of $\partial
 \phi$ that the map $v \mapsto \ls(v)$ is lower semicontinuous, so that $\ls \equiv \rsls$.
 Finally, the chain rule for convex functionals
\[
  \begin{gathered}
    \text{if}\quad
    u\in H^1(0,T;\Banach),\ \xi\in L^2(0,T;\Banach'),\
    \xi(t)\in \partial \phi(u(t))\ \forae\ t\in (0,T)\\
    \text{then}\quad \phi\circ u \in \AC([0,T]),\quad
    \tfrac \dd{\dd t}\phi(u(t))=\langle\xi(t),u'(t)\rangle\ \forae \ t \in (0,T).
  \end{gathered}
\]
ensures that $\ls \equiv \rsls$ is a strong upper gradient.
Exploiting~\eqref{e:argmin}, in~\cite[Prop.~1.4.1]{Ambrosio05} it
was shown that a curve $u \in \AC^p ([0,T];\Banach)$ is an energy
solution if and only if it fulfills
\[ \rmJ_p (u'(t)) +
\partial^\circ \phi(u(t)) \ni 0 \qquad \text{in $\Banach'$} \quad
\forae\, t \in (0,T)\,,
 \]
i.e. $\rmJ_p (u'(t))$ complies with the \emph{minimal section}
principle.
%%%%%%%%%%%%%%%%%
%%%%%%%%%%%
\subsection{Generalized solutions of the Stefan-Gibbs-Thomson  and the Mullins-Sekerka flows}
\label{ss:3.2} %%%%
\paragraph{\bf The Stefan-Gibbs-Thomson problem.}
% \label{ex:sgt} \upshape
The Stefan problem coupled with the Gibbs-Thomson law describes the
melting and solidification of a  solid-liquid  system, and also
takes into account  surface tension effects by imposing that the
temperature is equal to the mean curvature at the phase interface.
We denote by $\teta$ the relative temperature of the system,
occupying a bounded domain $\Omega \subset \R^d$, and by $\nchi \in
\{-1,1\}$ the phase parameter, so that the phases at time $t \in
(0,T)$ are
\[
E^+(t) = \left\{x \in \Omega \, : \ \nchi(x,t) =1 \right\}\,, \qquad
E^-(t) = \left\{x \in \Omega \, : \ \nchi(x,t) =-1 \right\}\,,
\]
and the phase interface  $S(t) $ is their common essential  boundary.
%(i.e., $S(t)=
%\partial^* E^+(t)=\partial^- E^+(t) $, cf.
%with~\cite[Def.~3.60]{Ambrosio-Fusco-Pallara00}).
 Neglecting external heat sources, the related PDE system is given by the energy balance
\begin{equation}
\label{eq:enbal}
\partial_t (\teta + \nchi) +A\teta = 0 \qquad \text{in $H^{-1}
(\Omega)$}  \quad \forae\, t \in (0,T)\,,
\end{equation}
($A$ denoting the realization of the Laplace operator with
homogeneous Dirichlet boundary conditions), and by the Gibbs-Thomson
condition at the phase interface, which formally reads
\begin{equation}
\label{e:gt-cond} \mathbf{H}(\cdot,t)= \teta(\cdot,t)
\mathbf{\nu}(t) \qquad \text{on $S(t)$,}
\end{equation}
$\mathbf{H}(\cdot,t)$ being the mean curvature vector of $S(t)$ and
$\mathbf{\nu}(t): S(t) \to \mathrm{S}^{d-1}$ the inner measure
theoretic normal to $E^+(t)$.

In the pioneering paper~\cite{Luckhaus90}, \textsc{Luckhaus} has
shown that there exist functions
\begin{equation}
\label{e:reg-teta} \teta \in L^2 (0,T;H_0^1 (\Omega)) \cap L^\infty
(0,T;L^2 (\Omega))
\end{equation}
 and
\begin{equation}
\label{e:reg-chi}
 \nchi : \Omega \times (0,T) \to \{-1,1\}, \  \text{
with}  \ \ \begin{cases} \displaystyle \nchi \in L^\infty
(0,T;\mathrm{BV}(\Omega)), \\
\displaystyle
 u:=\teta + \nchi \in H^1
(0,T;H^{-1}(\Omega)),
\end{cases}
\end{equation}
 fulfilling
equation~\eqref{eq:enbal}   for a.a. $t \in (0,T)$, and complying
with the weak formulation of the Gibbs-Thomson
law~\eqref{e:gt-cond}, i.e. for a.a. $t \in (0,T)$
\begin{equation}
   \label{eq:24}
    \begin{aligned}
     \int_\Omega\big(\dive \zzeta-\mathbf{\nu}^T(t) \,D\zzeta\,
      \mathbf{\nu}(t)\big) \,d|D\nchi(\cdot,t)|  =\int_\Omega &  \dive
      (\vartheta(\cdot,t)\zzeta)\nchi(\cdot,t) \dd x \\ &   \text{for all}\ \  \zzeta\in
      C^2(\overline\Omega;\R^d),\ \
      \zzeta\cdot \mathbf{n}=0\ \ \text{on }\partial\Omega\,,
    \end{aligned}
  \end{equation}
  $|D\nchi(\cdot,t)|$ being the total variation measure associated with the
  distributional gradient $D\nchi(\cdot,t)$.
\textsc{Luckhaus}'s existence proof  (see
also~\cite[Chap.~VIII]{Visintin96}), is based on a
 time-discretization  technique which fits  into the general
Minimizing Movements scheme introduced in Section~\ref{sss:2.3.1}. In
fact, (the initial-boundary value problem for)
system~(\ref{eq:enbal}, \ref{e:gt-cond}) has a natural gradient flow
structure with respect to the functional $\phiste:H^{-1}(\Omega) \to
(-\infty,+\infty]$ given by
\begin{equation}
\label{e:funz-sgt} \phiste(u):=\begin{cases} \displaystyle
\inf_{\nchi \in \mathrm{BV}(\Omega)} \left\{ \int_{\Omega}
\Big(\frac{1}{2}|u -\nchi |^{2} + I_{\{-1,1\}}(\nchi   ) \Big) \dd x
+ \int_{\Omega} |D\nchi| \right\} & \text{if $u \in L^2(\Omega)$,}
\\
\displaystyle + \infty & \text{otherwise,}
\end{cases}
\end{equation}
 $I_{\{-1,1\}}$ denoting the indicator function of the set $\{
 -1,1\}$.

This gradient  structure was later exploited
in~\cite{Rossi-Savare04},   where it was shown  that the Minimizing
Movements scheme in $H^{-1}(\Omega)$, driven by $\phiste$ and
starting from an initial datum $u_0\in D(\phi)=L^2 (\Omega)$
\begin{equation}
\label{incre-sgt} U_\tau^0:=u_0, \qquad  U_\tau^n \in
 \argmin_{v \in H^{-1}(\Omega))}\left\{
 \frac1{2\tau} \| v - U_\tau^{n-1} \|_{H^{-1} (\Omega)}^2  + \phiste(v)
\right\},
\end{equation}
(i.e. the variational approximation
scheme~\eqref{eq:time-incremental}, for $p=2$ and constant time-step
$\tau>0$),
 admits a solution $\{ U_\tau^n \}_{n \in \N}$ and
in fact coincides with the approximation algorithm %for the
%Stefan-Gibbs-Thomson problem
constructed in~\cite{Luckhaus90}
(see~\cite[Rmk.~2.7]{Rossi-Savare04}). Furthermore, it was proved
(see~\cite[Thm.~2.5]{Rossi-Savare04} and Section~\ref{ss:8.1} later
on) that, for every  initial datum $u_0 \in L^2 (\Omega)$,
  every $u
\in \mathrm{GMM} (\phiste, u_0)$ (cf. with Definition~\ref{def:dg})
gives raise to a solution $(\teta, \nchi)$ of~\eqref{eq:enbal} and
\eqref{eq:24}, complying with~\eqref{e:reg-teta}
and~\eqref{e:reg-chi},  and fulfilling  the \emph{Lyapunov
inequality} for all $t \in [0,T]$ and for almost all $s \in (0,t)$
\begin{equation}
\label{e:lyap-ineq}
    \begin{aligned}
 \int_{\Omega} \Big(\frac{1}2|u(x,t)  & -\nchi(x,t)|^2    +
I_{\{-1,1\}}(\nchi(x,t) \Big)\dd x \\ &   + \int_{\Omega}|D\nchi(t)|
     + \int_{s}^t \int_\Omega |\nabla\vartheta(x,r)|^2  \dd x \, \dd
     r \\ &
     \begin{aligned}
\leq \int_{\Omega} \Big(\frac{1}2|u(x,s)  & -\nchi(x,s)|^2   +
I_{\{-1,1\}}(\nchi(x,s) \Big)\dd x + \int_{\Omega}|D\nchi(s)|\,.
\end{aligned}
  \end{aligned}
\end{equation}

 In
Section~\ref{ss:8.1}, we shall recover the  gradient flow approach
of~\cite{Rossi-Savare04}. First of all,   we shall
 make  precise the setting in which the (Generalized) Minimizing
Movement associated with the functional $\phiste$ yields solutions
to the weak formulation (\ref{eq:enbal}, \ref{eq:24}) of the
Stefan-Gibbs-Thomson problem, see Corollary~\ref{prop:nuova}. On the
other hand, on account of  Theorem~\ref{thm:conve} later on, the
\emph{Minimizing Movement solutions} constructed
in~\cite{Rossi-Savare04} (i.e. the solutions arising from the
time-incremental scheme~\eqref{incre-sgt}),  are in fact generalized
solutions (in the sense of Definition~\ref{generalized}), driven by
the functional $\phiste$~\eqref{e:funz-sgt}. Using this fact, in
Theorem~\ref{e:coro-rest-points} we shall obtain the existence of
the global attractor for the Minimizing Movements solutions of  the
Stefan-Gibbs-Thomson problem.
% are generalized solutions on
%account of  below, we shall deduce some long-time behavior results
%for the former ones, and show that the solutions to the constructed
%in~\cite{Rossi-Savare04} are in fact
%\end{example}
\paragraph{\bf The Mullins-Sekerka flow.}
 %\upshape
 The Mullins-Sekerka flow is a variant of the Stefan
problem with the Gibbs-Thomson condition, modeling solid-liquid
phase transitions in thermal systems with a  negligible specific
heat. In this setting,  instead of~\eqref{eq:enbal} the internal
energy balance reads
\begin{equation}
\label{enbal-mu-sek}
\partial_t \nchi +A\teta = 0 \qquad \text{in $H^{-1}
(\Omega)$}  \quad \forae\, t \in (0,T)\,,
\end{equation}
coupled with the Gibbs-Thomson condition in the weak
form~\eqref{eq:24}.

 The global existence of solutions $(\teta,\nchi)$ to the Cauchy problem for
 the weak formulation of the
 Mullins-Sekerka system, with
 $\teta \in L^\infty (0,T;H_0^1 (\Omega)$ and $\nchi \in L^\infty (0,T;\mathrm{BV} (\Omega)) \cap
 H^{1}(0,T;H^{-1}(\Omega))$,  was first obtained in~\cite{Luckhaus95}. The proof was carried out    by passing to
 the limit in the following variational approximation scheme:
  starting from an initial datum $\nchi_0 \in \mathrm{BV}(\Omega;
 \{-1,1\})$, for a fixed time-step $\tau>0$ the
  discrete solutions $\{ \nchi_\tau^n \}_{n \in \N}$ are constructed
  via $\nchi_\tau^0:=\nchi_0$ and, for $n \geq  1$,
\[
\begin{aligned}
 \nchi_\tau^n \in
 &\argmin_{\nchi \in H^{-1}(\Omega))}\left\{
 \frac1{2\tau} \int_{\Omega}\big( A^{-1}\left(\nchi-\nchi_\tau^{n-1}\right)\left(\nchi-\nchi_\tau^{n-1}\right)
 +  I_{\{-1,1\}}(\nchi) \big) \dd x + \int_{\Omega}
|D\nchi| \right\}
 \\ & = \argmin_{\nchi \in H^{-1}(\Omega))}\left\{
 \frac1{2\tau} \| \nchi - \nchi_\tau^{n-1} \|_{H^{-1} (\Omega)}^2 +\int_{\Omega} I_{\{-1,1\}}(\nchi)  \dd x
 + \int_{\Omega}
|D\nchi|
 \right\}\,.
\end{aligned}
\]
Indeed, the above scheme (cf. with~\eqref{eq:time-incremental})
reveals that problem~(\ref{enbal-mu-sek}, \ref{eq:24}) has a
gradient flow structure w.r.t. the functional
% coincides with the general
%scheme, in which the ambient
% space is~\eqref{e:ambient7},
% $p=2$, $\tau$ is the constant time-step , and the functional
$\phims: H^{-1}(\Omega) \to [0,+\infty)$
 \begin{equation}
\label{funz-ms} \phims(\nchi):= \begin{cases} \displaystyle
\int_{\Omega} I_{\{-1,1\}}(\nchi)  \dd x
 + \int_{\Omega}
|D\nchi| & \text{if $\nchi \in \mathrm{BV}(\Omega;\{-1,1\})$,}
\\
\displaystyle +\infty & \text{otherwise.}
\end{cases}
 \end{equation}
In fact, the existence result in~\cite{Luckhaus95} was conditioned
to the validity of the convergence of the energy $\phims$ along the
approximate solutions. Such condition excludes a loss of surface
area for the phase interfaces in the passage to the limit in the
time-discrete problem. However, this convergence requirement, akin
to our continuity assumption~\eqref{cond-cont}, is not, in general,
fulfilled, as observed in~\cite{Roeger06}. Therein, by use of
refined techniques from the theory of integral varifolds, the author
could dispense with the additional condition of~\cite{Luckhaus95},
however at the price of obtaining in the limit the weak formulation
of the Gibbs-Thomson law~\eqref{eq:24} in a generalized, varifold
form.

Since $\phims$ does neither fulfill  the chain rule nor the
conditional continuity, the Minimizing Movements solutions of the
Mullins-Sekerka problem only give raise to generalized solutions of
the gradient flow driven by $\phims$. In
Section~\ref{ss:8.2} we shall initiate some analysis in this
direction. However, proving the existence of the global attractor
for the (Minimizing Movements solutions of the) Mullins-Sekerka
problem remains an open problem.

%\begin{rcomm}
%qui introdurre le idee principali  e dire che non vale la
%conditional continuity!
%\end{rcomm}
%\end{example}
    %%%%%%%%%%%%%%%%%%%%%%%%%%%%%%

%%%%%%%%%%%%%%%%%%%%%%%%%%%
%%%%%%%%%%%%%%%%%%%%%%%%%%%%%%%

\section{Main results}
\label{s:4} \setcounter{equation}{0}
\subsection{Statement of the main assumptions}
\label{ss:4.1}
\paragraph{\bf Topological assumptions.}
\begin{equation}
  \label{A1}\tag{A1}
 (\U,d) \quad \text{is a complete metric space.}
\end{equation}
%{\tt }
\begin{gather}
  \label{A2}\tag{A2a}
 \sigma \quad \text{is a Hausdorff topology on $ \U $ compatible with $\ d$}
\end{gather}
The latter compatibility means that $ \sigma $ is weaker than
 the topology induced by $ d$ and $ d $ is
 sequentially $ \sigma$-lower semicontinuous,
  namely $$(u_n,v_n)\wweak (u,v) \ \ \Rightarrow \ \ \liminf_{n \to +\infty}d(u_n,v_n) \geq d(u,v).$$
 An example of a choice for $ \sigma$ complying with \eqref{A2}
  is of course that of $ \sigma $ being the topology induced by $  d$.
We shall however keep these two topologies distinct for the sake of
later applications.

We also  require that
\begin{equation}
\label{A2bis} \tag{A2b}
\begin{gathered}
\text{there exists a distance $d_{\sigma}$ on $U$ %and $\bar{u} \inU$
such that  for all $\{u_n \},$ $u \in U$}
\\
u_n \weak u \ \Rightarrow \ d_{\sigma}(u,u_n) \to 0  \ \ \text{as $n
\to \infty$} %\ \  \text{and} \\ \left(d_{\sigma}(u,u_n) \to 0,  \ \
%\sup_n \left( d(\bar{u}, u_n ), \phi(u_n) \right) <+\infty \right) \
%\Rightarrow \ u_n \weak u.
\end{gathered}
\end{equation}
  Namely, the topology induced by $d_{\sigma}$ is globally weaker
than $\sigma$.

\paragraph{\bf Assumptions on the functional $\boldsymbol \phi$.}
Let  $p \in (1,\infty)$. We shall ask for the following
\begin{align}
&\text{(Lower semicontinuity)}\nonumber\\
&\qquad \phi\ \  \text{is sequentially $\sigma$-lower semicontinuous}.\label{A3}\tag{A3}\\
{}\nonumber\\
%&
%\begin{NN}
%\text{(Coercivity)}\nonumber\\
%&\qquad\text{there exist} \ \ \bar{\kappa} >0,  \ \ \forall\,
%\bar{u} \in \U, \ \forall\, \kappa >  \bar{\kappa}\, :
%\argmin_{v \in \U}\left(\kappa d^{p}(v,\bar{u}) + \phi (v) \right) \neq \emptyset. \label{A4}\tag{A4}\\
%{}\nonumber
%\end{NN}
%\\
&\text{(Compactness)}\nonumber\\
\label{A5}\tag{A4} &\qquad\text{the sublevels of $ \phi $ are
relatively $ \sigma$-sequentially compact}.\nonumber\\
{}\nonumber
\end{align}

\begin{remark}
\label{rem:bounded-from-below} \upshape
 It can be easily checked
that~\eqref{A3} and~\eqref{A5} entail that
\begin{equation}
\label{below}
    \phi\ \  \text{is bounded from below on $U$}.
  \end{equation}
 % which is of course stronger than the coercivity
%assumption~\eqref{A4}. However, we prefer to keep
%\eqref{A3}--\eqref{A5} distinct, for later convenience.
\end{remark}

\paragraph{\bf Existence and approximation of generalized solutions.}
\noindent Under the above assumptions,  we  have  the following
crucial statement, which subsumes~\cite[Cor.~3.3.4]{Ambrosio05}
and~\cite[Thm.~2.3.1]{Ambrosio05}.
\begin{proposition}[Generalized Minimizing Movements are generalized
solutions] \label{thm:conve} Assume \eqref{A1}--\eqref{A5} and let a
family $\Lambda$ of partitions of $[0,+\infty)$ be given with
$\inf_{\ttau \in \Lambda}|\ttau| = 0$.
\\
 Then, $\mathrm{GMM}(\phi;u_0) \neq \emptyset$.
Further, every  $u \in \mathrm{GMM}(\phi;u_0) $ is  in $\AC_{\loc}^p
([0,+\infty);U)$ and there exists  a non-increasing function
$\varphi: [0,+\infty) \to \R$ such that, $\{\ttau_k\}$ being a
sequence with $|\ttau_k|\to 0$ fulfilling \eqref{convergenze}, there
holds
\[
\begin{gathered}
 \varphi(t) = \lim_{k \to \infty} \phi(\overline U_{\ttau_{k}}(t)) \geq
 \phi(u(t)) \quad \text{for all} \ \ t \geq 0, \quad \varphi(0)=
 \phi(u(0))=\phi(u_0),
\end{gathered}
\]
 and the pair $(u,\varphi)$ is a {generalized solution} in
the sense of Definition~\emph{\ref{generalized}}.
\end{proposition}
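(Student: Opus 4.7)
The plan is to follow the strategy in \cite[Chap.~3]{Ambrosio05} (which Proposition~\ref{thm:conve} subsumes), adapted to the weaker relaxed slope $\rls$ of~\eqref{slope2}. First I would solve the time-incremental scheme~\eqref{eq:time-incremental}: for each partition $\ttau\in\Lambda$ and each previous iterate $\Utau^{n-1}$, the functional $v\mapsto d^p(v,\Utau^{n-1})/(p(\tau^n)^{p-1})+\phi(v)$ is $\sigma$-sequentially lower semicontinuous by~\eqref{A2}--\eqref{A3}, and its sublevels are $\sigma$-sequentially compact by~\eqref{A5} and Remark~\ref{rem:bounded-from-below}; hence the direct method yields $\Utau^n\in D(\phi)$. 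The standard comparison of the minimizer $\Utau^n$ with the competitor $\Utau^{n-1}$ produces the discrete energy identity~\eqref{A} at each node $t^i_\ttau$, which I would sum to obtain, for every partition, the global a priori bounds
\begin{equation*}
\int_0^T \Big(\tfrac{d(\overline U_\ttau(t),\underline U_\ttau(t))}{\ttau^{i(t)}}\Big)^p \dd t + \int_0^T \ls^{p'}(\tilde U_\ttau(t))\dd t + \sup_{t\ge 0}(\phi(\overline U_\ttau(t))-\inf_U\phi) \le C(\phi(u_0)-\inf_U\phi),
\end{equation*}
together with the monotonicity of $t\mapsto \phi(\overline U_\ttau(t))$.

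Next I would extract a limit along a sequence $\ttau_k$ with $|\ttau_k|\to 0$. The $\sigma$-compactness of sublevels of $\phi$ (the energy bound freezes $\overline U_{\ttau_k}(t)$ in a $\sigma$-compact set for every $t$), combined with the $d_\sigma$-equicontinuity that follows from the $p$-integrability bound on the discrete metric derivative and the compatibility axiom~\eqref{A2bis}, allows a refined Arzelà--Ascoli argument (essentially Prop.~3.3.1 of~\cite{Ambrosio05}) to pick a subsequence, not relabelled, and a curve $u\in \AC^p_\loc([0,+\infty);U)$ such that $\overline U_{\ttau_k}(t)\weak u(t)$ for every $t\ge 0$, with $|u'|(t)\le \liminf_k (d(\overline U_{\ttau_k}(t),\underline U_{\ttau_k}(t))/\ttau_k^{i(t)})$ in an integrated Fatou sense. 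A further diagonal extraction and Helly's selection principle applied to the non-increasing sequence $t\mapsto \phi(\overline U_{\ttau_k}(t))$ provide a non-increasing limit $\varphi:[0,+\infty)\to\R$ with $\varphi(t)=\lim_k \phi(\overline U_{\ttau_k}(t))$ everywhere, and $\varphi(t)\ge \phi(u(t))$ by~\eqref{A3}. At $t=0$ all interpolants equal $u_0$, so $\varphi(0)=\phi(u_0)=\phi(u(0))$.

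The core of the proof is the limit passage in~\eqref{A}. For the metric-derivative term I would use the lower-semicontinuity property of the $L^p$-norm of $|u'|$ with respect to $\sigma$-pointwise convergence (Fatou on the discrete quotients, combined with the a priori bound). For the slope term I would exploit the definition~\eqref{slope2} and the lower-semicontinuity~\eqref{double-lsc} of $\rls$: for almost every $t$, using that the variational interpolant $\tilde U_{\ttau_k}(t)$ is $\sigma$-close to $\overline U_{\ttau_k}(t)$ and $\underline U_{\ttau_k}(t)$ (up to the standard a priori estimate that $d(\tilde U_\ttau,\overline U_\ttau)\to 0$ in $L^p$) and that $\phi(\tilde U_{\ttau_k}(t))\to \varphi(t)$ because of the sandwich $\phi(\overline U_{\ttau_k}(t))\le \phi(\tilde U_{\ttau_k}(t))\le \phi(\underline U_{\ttau_k}(t))$ (by the minimality property defining the variational interpolant), one obtains $\liminf_k \ls(\tilde U_{\ttau_k}(t))\ge \rls(u(t),\varphi(t))$. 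A Fatou argument on $[s,t]$, together with the convergence of the energy boundary terms $\phi(\overline U_{\ttau_k}(\cdot))\to \varphi(\cdot)$, then yields the energy inequality~\eqref{1} for every $0\le s\le t$, and~\eqref{2} is already in hand.

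The main obstacle is precisely this slope-term passage: one must make sure that the auxiliary energy coordinate in $\rls(\cdot,\cdot)$ records the correct limit $\varphi(t)$ rather than the (possibly smaller) value $\phi(u(t))$. This is exactly why the weak relaxation~\eqref{slope2} and its joint lower-semicontinuity~\eqref{double-lsc} have been introduced; the sandwich argument above, controlled by the discrete energy identity and the a priori bounds, is the key ingredient to bridge the gap between the De Giorgi variational interpolant and the piecewise constant limit $(u,\varphi)$.
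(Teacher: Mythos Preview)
Your proposal is correct and follows precisely the strategy the paper indicates: the paper does not give its own proof of Proposition~\ref{thm:conve} but refers to \cite[Cor.~3.3.4, Thm.~2.3.1]{Ambrosio05}, and the heuristics it sketches in Section~\ref{sss:2.3.1} (discrete energy identity~\eqref{A}, a priori bounds, the refined Ascoli argument \cite[Prop.~3.3.1]{Ambrosio05}, Helly's theorem \cite[Lemma~3.3.3]{Ambrosio05}, and the lower-semicontinuity passage via the weak relaxed slope $\rls$) are exactly the steps you outline. Your identification of the key subtlety---that the auxiliary energy coordinate in $\rls(\cdot,\cdot)$ must capture the limit $\varphi(t)$ rather than $\phi(u(t))$, handled through the sandwich $\phi(\overline U_{\ttau_k}(t))\le \phi(\tilde U_{\ttau_k}(t))\le \phi(\underline U_{\ttau_k}(t))$ and the monotonicity of $r\mapsto \phi(\tilde U_r)$ from \cite[Lemma~3.1.2]{Ambrosio05}---is the correct point of emphasis.
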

\begin{remark}
\upshape
  Note that no
  restriction on the diameter of the partitions is needed for the above convergence statement.
 Moreover, in \cite{Ambrosio05}
  a slightly more general class of functionals was
   considered
   and the compactness condition~\eqref{A5} was in fact required
   only on $d$-bounded subsets of sublevels of $\phi$. Indeed,  the
   stronger~\eqref{A5} is needed for
   the purposes of the present long-time analysis.
\end{remark}
%%%%%%%%%%%%%%%%%%
\subsection{Statement of the main results}
\subsubsection{\bf Global attractor for generalized solutions}
 We refer
the reader to Section~\ref{s:a2} for
 the main
definitions and results of the theory of {global attractors} for
{generalized semiflows},   closely following {\sc J.M. Ball}
\cite{Ball97}.

We shall  apply the theory of generalized semiflows in the framework
of the metric phase space
\begin{equation}
\label{e:phase-space}
\begin{array}{ll}
 \displaystyle
\cx= \left\{(u,\varphi) \in D(\phi) \times \R : \ \varphi\geq
\phi(u)\right\}, \quad \text{endowed with the distance}
\\
 \displaystyle
\dcx ((u,\varphi), (u',\varphi'))= d_{\sigma}(u,u') +
|\varphi-\varphi'| \quad \forall (u,\varphi), (u',\varphi') \in \cx.
\end{array}
\end{equation}

Our candidate generalized semiflow is the set
\begin{align}
{\mathcal S}:=\Big\{ (u,\varphi):[0,+\infty]\to \U\times \Rz \ : \
(u,\varphi)\ \ \text{is a generalized solution} \Big\}.\nonumber
\end{align}
All of  the following results shall be proved in Section~\ref{s:5}.
\begin{theorem}[Generalized solutions  form   a generalized semiflow]\label{semiflow}
Assume \eqref{A1}--\eqref{A5}. Then, $ \mathcal{S}$ is a semiflow on
$ (\cx,\dcx) $ and  complies with   the \emph{continuity
property}~\emph{(C0)}.
\end{theorem}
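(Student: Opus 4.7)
The plan is to verify the four axioms of a generalized semiflow---existence, translation invariance, concatenation, and sequential upper semicontinuity with respect to initial data---for $\mathcal{S}$ in the phase space $(\cx,\dcx)$, and then the continuity property (C0). For existence, given an arbitrary $(u_0,\varphi_0)\in\cx$, I invoke Proposition~\ref{thm:conve} to produce a generalized solution $(u,\varphi)$ with $u(0)=u_0$ and $\varphi(0)=\phi(u_0)\le\varphi_0$; if $\varphi_0>\phi(u_0)$ I modify $\varphi$ at the single point $t=0$ by setting $\tilde\varphi(0):=\varphi_0$, keeping $\tilde\varphi(t):=\varphi(t)$ otherwise. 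Then $\tilde\varphi$ is still non-increasing, \eqref{2} is preserved, and the integrals in \eqref{1} are unaffected since a single point has zero measure; the case $s=0$ of \eqref{1} reduces to the chain $\frac1p\int_0^t|u'|^p + \frac1{p'}\int_0^t\rls^{p'}(u,\varphi) +\varphi(t)\le\varphi(0)=\phi(u_0)\le\varphi_0=\tilde\varphi(0)$. Translation invariance is immediate from the form of \eqref{1}--\eqref{2}, and concatenation at any matching node $\tau$ follows by summing the two energy inequalities across $\tau$.

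The main step is sequential upper semicontinuity. Given $(u_n,\varphi_n)\in\mathcal{S}$ with $(u_n(0),\varphi_n(0))\to(u_0,\varphi_0)$ in $\dcx$, I first extract a pointwise limit of $\varphi_n$: each $\varphi_n$ is non-increasing and, by Remark~\ref{rem:bounded-from-below} together with the boundedness of $\{\varphi_n(0)\}$, uniformly bounded, so Helly's selection theorem delivers a subsequence converging pointwise to a non-increasing $\varphi$ with $\varphi(0)=\varphi_0$. Then $\phi(u_n(t))\le\varphi_n(t)$ is uniformly bounded, placing $u_n(t)$ into a $\sigma$-relatively compact sublevel of $\phi$ by \eqref{A5}. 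The energy inequality \eqref{1} further yields the uniform bound $\int_0^T|u_n'|^p\,\dd r\le p(\sup_n\varphi_n(0)-\inf_{U}\phi)\le C$, whence equi-H\"older continuity of the $u_n$ in $d$ with exponent $1-1/p$ on any bounded interval. A refined Ascoli--Arzel\`a-type argument (in the vein of \cite[Prop.~3.3.1]{Ambrosio05}) coupled with a diagonal extraction then produces a subsequence with $u_n(t)\weak u(t)$ for every $t\ge 0$, and $u\in\AC_{\mathrm{loc}}^p([0,+\infty);U)$. To pass to the limit in \eqref{1} I combine the $L^p$-lower semicontinuity of the metric derivative with the joint $\sigma$-lower semicontinuity \eqref{double-lsc} of $\rls$ and Fatou's lemma, bounding $\int_s^t|u'|^p$ and $\int_s^t\rls^{p'}(u,\varphi)$ from above by the respective $\liminf$s; the pointwise convergences $\varphi_n(s)\to\varphi(s)$ and $\varphi_n(t)\to\varphi(t)$ then close \eqref{1}, while $\phi(u(t))\le\liminf_n\phi(u_n(t))\le\varphi(t)$ by \eqref{A3} yields \eqref{2}, so $(u,\varphi)\in\mathcal{S}$.

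For (C0), each $(u,\varphi)\in\mathcal{S}$ has $u\in\AC_{\mathrm{loc}}^p$, hence $d$-continuous and therefore $d_\sigma$-continuous by \eqref{A2bis}; the component $\varphi$ is non-increasing, and after replacing it by its right-continuous version---still a generalized solution by Remark~\ref{rem:prop-gen-sol}---the pair is continuous in $\dcx$ at all but countably many points, in the sense required by Ball's framework. The principal obstacle is the compactness step inside upper semicontinuity: one must marry the $d$-equi-H\"older regularity of $u_n$ (coming from the uniform $AC^p$ bound) with the $\sigma$-compactness of $\phi$-sublevels to extract a single $\sigma$-pointwise limit curve, and then pass to the limit in the non-convex term $\int\rls^{p'}(u_n,\varphi_n)$, where the joint lower semicontinuity \eqref{double-lsc} of $\rls$ is the crucial structural input without which the energy inequality could not be recovered.
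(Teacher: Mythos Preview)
Your proof is correct and follows essentially the same route as the paper's: Proposition~\ref{thm:conve} plus a single-point modification of $\varphi$ for (H1), trivial checks for (H2)--(H3), and for (H4) the combination of uniform $L^p$-bounds on the metric derivative from the energy inequality, Helly's theorem for the non-increasing $\varphi_n$, the refined Ascoli argument of~\cite[Prop.~3.3.1]{Ambrosio05} for $u_n$, and Fatou together with \eqref{double-lsc} to pass to the limit in \eqref{1}. The only cosmetic difference is the order in which you invoke Helly and Ascoli.

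One small remark on (C0): the replacement of $\varphi$ by its right-continuous version is unnecessary and slightly misleading, since (C0) must hold for \emph{each} element of $\mathcal{S}$, not for a modification. But the argument works as stated without replacement: any non-increasing $\varphi$ already has at most countably many discontinuities, so the original pair $(u,\varphi)$ is $\dcx$-continuous off a countable set (using that $u$ is $d$-continuous, hence $d_\sigma$-continuous via (A2a)--(A2b)), and therefore strongly measurable. The paper in fact leaves (C0) implicit.
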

The following Proposition sheds light on the properties of the
semiflow ${\mathcal S}$.
\begin{proposition}
\label{compactness} Assume \eqref{A1}--\eqref{A5}. Then,
\begin{enumerate}
\item  $ \mathcal{S} $ is asymptotically compact;
\item $ \mathcal{S} $ admits a Lyapunov function;
\item the set
$Z({\mathcal{S}})$ of the rest points for ${\mathcal{S}}$ is given
by
  \begin{equation}
\label{rest-point}
 Z({\mathcal{S}})= \left\{ (\bar{u},\bar{\varphi})\in \cx\
: \  \rls(\bar{u},\bar{\varphi})=0 \right\}.
  \end{equation}
  \end{enumerate}
\end{proposition}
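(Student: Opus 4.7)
The plan is to verify the three items separately, each exploiting the energy inequality \eqref{1}, condition \eqref{2}, and the monotonicity of $\varphi$ along trajectories which follows from them.

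For item (1), asymptotic compactness, I would start with a sequence $\{(u_n,\varphi_n)\} \subset \mathcal{S}$ whose initial data $(u_n(0),\varphi_n(0))$ lies in a $\dcx$-bounded set, together with an arbitrary sequence $t_n \to +\infty$. Boundedness in $\dcx$ immediately forces $\{\varphi_n(0)\}$ bounded in $\R$. Since \eqref{1} gives $\varphi_n(t_n) \leq \varphi_n(0)$, and \eqref{below} provides a lower bound, the reals $\varphi_n(t_n)$ lie in a compact interval of $\R$. From \eqref{2}, $\phi(u_n(t_n)) \leq \varphi_n(t_n)$ is uniformly bounded, so $\{u_n(t_n)\}$ sits inside a sublevel of $\phi$; hence by the compactness hypothesis \eqref{A5}, a subsequence satisfies $u_n(t_n) \weak u^\ast$. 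Passing to a further subsequence one obtains $\varphi_n(t_n) \to \varphi^\ast$ in $\R$. Assumption \eqref{A2bis} converts the $\sigma$-convergence into $\ds(u_n(t_n),u^\ast) \to 0$, while the lower semicontinuity \eqref{A3} yields $\phi(u^\ast) \leq \liminf_n \phi(u_n(t_n)) \leq \varphi^\ast$, so that $(u^\ast,\varphi^\ast) \in \cx$. These two facts together give $\dcx((u_n(t_n),\varphi_n(t_n)),(u^\ast,\varphi^\ast)) \to 0$, which is the required asymptotic compactness.

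For item (2), the natural candidate Lyapunov function is $V(u,\varphi):=\varphi$, which is $1$-Lipschitz on $(\cx,\dcx)$ by the very definition of $\dcx$. Monotonicity along any generalized solution is immediate from \eqref{1}. To check the strict Lyapunov property, suppose $V(u(t),\varphi(t)) = V(u(0),\varphi(0))$ for all $t \geq 0$, i.e.\ $\varphi \equiv \varphi(0)$ on $[0,+\infty)$. Feeding this back into \eqref{1} on $[0,t]$ forces
\begin{equation*}
\frac{1}{p}\int_0^t |u'|^p(r)\,\dd r + \frac{1}{p'}\int_0^t \rls^{p'}(u(r),\varphi(r))\,\dd r \leq 0,
\end{equation*}
so that $|u'|(r) = 0$ for a.a.\ $r$. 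By \eqref{metric_dev} this implies $u(t) \equiv u(0)$, hence $(u,\varphi)$ is a constant trajectory, i.e.\ a rest point. This establishes that $V$ is a Lyapunov function for $\mathcal{S}$ in Ball's sense.

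For item (3), I would argue both inclusions from the same identity. If $(\bar u,\bar\varphi) \in \cx$ satisfies $\rls(\bar u,\bar\varphi) = 0$, the constant curve $(u(t),\varphi(t)) \equiv (\bar u,\bar\varphi)$ automatically verifies \eqref{2} (since $(\bar u,\bar\varphi) \in \cx$ means $\bar\varphi \geq \phi(\bar u)$) and \eqref{1} (since $|u'|\equiv 0$, $\rls(u,\varphi)\equiv 0$, and $\varphi(t)=\varphi(s)$), so it lies in $\mathcal{S}$ and thus $(\bar u,\bar\varphi) \in Z(\mathcal{S})$. Conversely, if $(\bar u,\bar\varphi) \in Z(\mathcal{S})$, the constant trajectory is a generalized solution, so applying \eqref{1} on $[0,t]$ gives $\frac{t}{p'}\rls^{p'}(\bar u,\bar\varphi) \leq 0$ for all $t>0$, whence $\rls(\bar u,\bar\varphi) = 0$, proving the formula \eqref{rest-point}.

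The main obstacle is the asymptotic compactness step: the chain of implications from $\dcx$-boundedness of initial data to precompactness of the images at large times hinges critically on threading through the three distinct topologies involved ($d$, $\sigma$, and $\ds$). The mild hypothesis \eqref{A2bis} is precisely what is needed to translate the $\sigma$-convergence extracted from compactness of $\phi$-sublevels into genuine $\dcx$-convergence, and the choice of $\cx$ as the augmented phase space is what makes $\dcx$-boundedness of initial data upgrade to an energy bound through \eqref{2}—this is where the non-trivial role of the coupled metric $\dcx$ becomes essential.
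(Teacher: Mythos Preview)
Your proof is correct and follows essentially the same approach as the paper: the same Lyapunov function $V=\pi_2$, the same route to asymptotic compactness via \eqref{1}, \eqref{2}, \eqref{below}, and \eqref{A5}, and the same verification of \eqref{rest-point}. One small imprecision: in item~(2), Ball's definition requires the ``constant $\Rightarrow$ stationary'' implication for \emph{complete orbits} $g:\R\to\cx$, whereas you argue on $[0,+\infty)$; the paper handles this by applying \eqref{1} for all $s\le t$ along the complete orbit, and your argument extends immediately by applying it to each translate $g^s$.
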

%%%%%%%%%%%%%%%%%%%
%%%%%%%%%%%%%%%%%%%%%%%%%%
\begin{theorem}[Global attractor for generalized solutions]
\label{teor:1} Under assumptions \eqref{A1}--\eqref{A5}, suppose
further
 that
\begin{equation}
\tag{A5} \label{A12} \text{the set $Z(\mathcal{S})$ of the rest
points of $\mathcal{S}$ is bounded in $ (\cx,\dcx)$}.
\end{equation}
Then, the semiflow $\mathcal{S}$ admits a
 global attractor $A$.
Moreover, $\omega(u,\varphi) \subset Z(\mathcal{S})$ for every
trajectory $(u, \varphi) \in \mathcal{S}$.
\end{theorem}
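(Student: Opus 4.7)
The plan is to reduce the statement to an application of Ball's abstract theorem on the existence of global attractors for generalized semiflows admitting a Lyapunov function (recalled in Appendix~\ref{s:a2}). That theorem asserts that if a generalized semiflow $\mathcal{S}$ is asymptotically compact, possesses a Lyapunov function, and has a bounded set of rest points $Z(\mathcal{S})$, then $\mathcal{S}$ admits a global attractor $A$, and moreover $\omega(u,\varphi)\subset Z(\mathcal{S})$ for every trajectory. Thus the whole proof amounts to checking that all four hypotheses of this abstract result are in force in our setting.

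Three of the four ingredients have already been supplied by the preceding results: Theorem~\ref{semiflow} establishes that $\mathcal{S}$ is a generalized semiflow on $(\cx,\dcx)$ satisfying the continuity property (C0); Proposition~\ref{compactness}(1) provides the asymptotic compactness of $\mathcal{S}$; and Proposition~\ref{compactness}(2) furnishes a Lyapunov function, the natural candidate being the map $(u,\varphi)\mapsto \varphi$, which is non-increasing along generalized solutions by \eqref{1}, continuous with respect to $\dcx$, and bounded from below thanks to \eqref{below} together with \eqref{2}. The fourth and final hypothesis, namely the boundedness of $Z(\mathcal{S})$ in $(\cx,\dcx)$, is exactly the newly added assumption \eqref{A12}, which makes use of the characterization \eqref{rest-point} of $Z(\mathcal{S})$ supplied by Proposition~\ref{compactness}(3). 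Applying Ball's theorem then yields simultaneously the existence of the global attractor $A$ and the inclusion $\omega(u,\varphi)\subset Z(\mathcal{S})$ for every $(u,\varphi)\in\mathcal{S}$.

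The only delicate point to verify is that $(u,\varphi)\mapsto \varphi$ is a \emph{strict} Lyapunov function in the sense required by Ball's theory, i.e.\ that if $\varphi$ is constant on some interval $[s,t]$ along a trajectory then $(u,\varphi)$ is necessarily a rest point on that interval. This follows by inspection of the energy inequality \eqref{1}: constancy of $\varphi$ on $[s,t]$ forces both nonnegative integrands to vanish almost everywhere, so $|u'|\equiv 0$ (whence $u$ is constant) and $\rls(u(r),\varphi(r))=0$ for a.a.\ $r\in(s,t)$, which, via the lower semicontinuity of $\rls$ from \eqref{double-lsc} and the characterization \eqref{rest-point}, places $(u(r),\varphi(r))$ in $Z(\mathcal{S})$. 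This verification is the main, though minor, technical point of the proof; everything else is a direct invocation of the abstract machinery.
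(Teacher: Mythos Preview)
Your proposal is correct and follows essentially the same route as the paper: the paper's proof of Theorem~\ref{teor:1} is the one-line statement that it follows from Theorem~\ref{thm:ball2} and Proposition~\ref{compactness}, which is exactly the reduction you carry out. Your final paragraph re-deriving the strictness of the Lyapunov function is redundant, since that verification is already part of the proof of Proposition~\ref{compactness}(2) (in Ball's Definition a Lyapunov function is required to be strict), but it does no harm.
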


%%%%%%%%%%%%%%%%%%%%%%%%%%%%%%%

%%%%%%%%%%%%%%%%%%%%%%%%%%%%%%%%%%%%%%%%%%%%%%%%%%%%%%%%%%%%%%%%%%%

%%%%%%%%%%%%%%%%%%%%%%%%%
\subsubsection{\bf Global attractor for energy solutions} Throughout this
section, we further assume that
\begin{equation}
\label{ass-cont} \tag{A6} \text{$\phi$ complies with the conditional
continuity property~\eqref{cond-cont},}
\end{equation}
\begin{equation}
\label{sug} \tag{A7}
 \text{ $\rsls$ is a strong upper
gradient.}
\end{equation}
  Proposition~\ref{comp-sol} and~\eqref{ass-cont} yield that
%\begin{equation}
%\label{comp-semiflow}
\[
\begin{aligned}
\mathcal{S}=\Big\{ (u,\varphi) \ \ \text{generalized solution,
with}\ \varphi(t)=\phi(u(t)) \ \text{a.e. on $(0,+\infty)$}\Big\}.
\end{aligned}
%\end{equation}
\]
Hence, it is not difficult to check that the set of rest points of
$\mathcal S$~\eqref{rest-point} reduces to
  \begin{equation}
\label{rest-point-bis}
 Z({\mathcal{S}})= \left\{ (\bar{u},\bar{\varphi})\in \cx
: \ \ \bar{\varphi} = \phi(\bar u), \ \ \rsls(\bar{u})=0 \right\}.
  \end{equation}
Hereafter, we shall use the following notation
\begin{align}
&\label{def:eni} \Ene:=\left\{ \text{$u\in
{\AC}^p_\loc([0,+\infty);\U)$  :  $u $ \text{is an energy
solution}}\right\}.
\end{align}
 Thanks  to Proposition~\ref{comp-sol}, assumption~\eqref{sug}   gives that $
\pi_1 (\mathcal{S})=\Ene$ ($\pi_1$ denoting the projection on the
first component).

We now aim to study the long-time behavior  of energy solutions  in the phase space
\begin{equation}
\label{phase-space} (D(\phi),\du), \quad \text{with} \quad \du
(u,u'):= d_{\sigma}(u,u') + |\phi(u)-\phi(u')| \qquad \text{for all} \ \
u,u' \in D(\phi).
\end{equation}
We shall denote by $e_{\phi}$ the Hausdorff semidistance associated
with  the metric $\du$, namely, for all non-empty sets $A,\, B
\subset D(\phi) $, we have $e_\phi(A,B)= \sup_{a\in A} \inf_{b\in
B}d_\phi(a,b).$ Similarly, we denote by $e_\mathcal{X} $ the
Hausdorff semidistance associated with  the metric $d_\mathcal{X}$.
Let us introduce the {\em lifting operator}
$$
\mathcal{U}:  D(\phi) \to \mathcal{X} \qquad \mathcal{U}(u):=
(u,\phi(u)) \quad \text{for all} \ \  u \in D(\phi)
$$
and remark that $ \mathcal{U} (\pi_1 (E)) \subset E$ for all $E
\subset \mathcal{X} $ and
\begin{equation}
\label{ohserve}  E \subset \mathcal{U} (D(\phi)) \ \Rightarrow \ E=
\mathcal{U} (\pi_1 (E)).
\end{equation}
Furthermore,  the metric
  $\dcx$ restricted to
the set
  $\mathcal{U}(D(\phi))$ coincides with $\du$, namely
  \begin{equation}
  \label{dist-migliore}
\dcx (\mathcal{U}(u_1),\mathcal{U}(u_2))= \du(u_1, u_2) \qquad
\text{for all} \ \ u_1,u_2 \in D(\phi).
  \end{equation}

The following result (which is in fact a corollary of
Theorems~\ref{semiflow} and~\ref{teor:1})
 states that all information on the long-time behavior of {energy solutions}
 is encoded in the set $\pi_1 (A)$, $A$ being the global
  attractor for {generalized solutions}.
\begin{theorem}[Global attractor for energy solutions]
\label{thm:ene} Assume~\eqref{A1}--\eqref{A5} and
\eqref{A12}--\eqref{sug}. Then,
\begin{enumerate}
\item[\rm 1)]  the set $\Ene$ from~\eqref{def:eni}
is a generalized semiflow in the phase space $(D(\phi),\du)$ defined
by \eqref{phase-space}, and fulfills  the \emph{continuity
properties} \emph{(C0)}--\emph{(C3)} (cf.
Definition~\ref{def:generalized-semiflow}),
\item[\rm 2)]
the set
\begin{equation}
\label{attractor} \text{$\pi_1 (A)$ is the global attractor for
$\Ene$,}
\end{equation}
 while $\pi_1 (Z (\mathcal{S}))$ is the set of its rest points.
 \end{enumerate}
\end{theorem}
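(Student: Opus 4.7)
The plan is to exploit the pointwise bijection between energy solutions in $\Ene$ and generalized solutions in $\mathcal{S}$ furnished by the lifting $\mathcal{U}:D(\phi)\to\cx$, $\mathcal{U}(u)=(u,\phi(u))$. Indeed, under \eqref{ass-cont}--\eqref{sug}, the proof of Proposition~\ref{comp-sol2} establishes that every generalized solution $(u,\varphi)\in\mathcal{S}$ actually satisfies $\varphi(t)=\phi(u(t))$ for \emph{every} $t\ge 0$, so that $\mathcal{U}$ restricts to a bijection between $\Ene$ and $\mathcal{S}$, acting pointwise in time. By~\eqref{dist-migliore}, this $\mathcal{U}$ is moreover an isometry between $(D(\phi),\du)$ and its image $(\mathcal{U}(D(\phi)),\dcx)$. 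The whole proof then consists in transporting the results of Theorem~\ref{semiflow}, Proposition~\ref{compactness}, and Theorem~\ref{teor:1} from $\mathcal{S}$ to $\Ene$ through this isometry.

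For part~1), I would verify the semiflow axioms (H1)--(H4) and the continuity properties (C0)--(C3) for $\Ene$ by transferring them, via $\mathcal{U}$, from the corresponding properties of $\mathcal{S}$ established in Theorem~\ref{semiflow} and Proposition~\ref{compactness}. Existence of an energy solution from any $u_0\in D(\phi)$ (axiom (H1)) follows by combining Proposition~\ref{thm:conve} with Proposition~\ref{comp-sol2}; time-translation invariance (H2) and concatenation (H3) pass immediately from $\mathcal{S}$ to $\Ene$ since $\mathcal{U}$ is defined pointwise; the upper-semicontinuity axiom (H4) and the continuity properties (C0)--(C3) transfer through the isometric identification~\eqref{dist-migliore}, using that convergence in $\du$ forces convergence of the energies, which is precisely what is needed to match $\dcx$-convergence of the lifts.

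For part~2), the first step is to prove $A\subset\mathcal{U}(D(\phi))$, so that~\eqref{ohserve} yields $A=\mathcal{U}(\pi_1(A))$. This uses that every point of $A$ lies on a bounded complete trajectory of $\mathcal{S}$ (a standard consequence of invariance), and each such trajectory takes values in $\mathcal{U}(D(\phi))$ by the pointwise identification recalled above. Simultaneously, $Z(\mathcal{S})\subset\mathcal{U}(D(\phi))$ by~\eqref{rest-point-bis}, so via the isometry the hypothesis~\eqref{A12} is equivalent to boundedness of $\pi_1(Z(\mathcal{S}))$ in $(D(\phi),\du)$, and Theorem~\ref{teor:1} applies. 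Once $A=\mathcal{U}(\pi_1(A))$ is known, the attractor properties transport: compactness of $\pi_1(A)$ follows from continuity of $\pi_1$ and compactness of $A$; invariance under $\Ene$ is the $\pi_1$-image of invariance of $A$ under $\mathcal{S}$; and $\du$-attraction of bounded subsets of $D(\phi)$ is equivalent, via $\mathcal{U}$, to the $\dcx$-attraction granted by Theorem~\ref{teor:1}. The identification $\pi_1(Z(\mathcal{S}))$ as the set of rest points of $\Ene$ is immediate from~\eqref{rest-point-bis} together with the definition of rest points in the $\Ene$-semiflow.

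The main technical obstacle lies in checking that the semiflow axioms and continuity properties truly survive the lifting: $\mathcal{U}$ involves the possibly discontinuous functional $\phi$, and the topology induced by $\du$ is strictly finer than that induced by $d_\sigma$. The decisive point is precisely~\eqref{dist-migliore}, which shows that the metric $\du$ is designed to record exactly the extra energy information needed to convert $\sigma$-convergence of the base points into $\dcx$-convergence of the lifts; this makes $\mathcal{U}$ bicontinuous in the sense required by the Ball framework, and so the entire structure transports without further analytical effort.
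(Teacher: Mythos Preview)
Your approach is essentially the same as the paper's: both exploit the lifting $\mathcal{U}$ and the isometry~\eqref{dist-migliore} to transport the attractor theory from $\mathcal{S}$ to $\Ene$, and both derive $A=\mathcal{U}(\pi_1(A))$ as the pivot for part~2). Two small points of comparison are worth noting. First, Theorem~\ref{semiflow} only establishes (C0) for $\mathcal{S}$, so (C1)--(C3) cannot literally be ``transferred'' from $\mathcal{S}$; the paper instead obtains (C1) and (C3) directly from the local absolute continuity of $\phi\circ u$ (built into Definition~\ref{def:ly-en}), and argues (C2) separately via the metric Ascoli theorem, just as in the proof of Theorem~\ref{semiflow}. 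Second, for the inclusion $A\subset\mathcal{U}(D(\phi))$ the paper uses that $\mathcal{T}(t)\mathcal{X}\subset\mathcal{U}(D(\phi))$ for every $t>0$ (by Proposition~\ref{comp-sol}), whence the invariance $A=\mathcal{T}(t)A$ immediately gives the inclusion; your route through complete orbits is an equivalent alternative.
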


\subsubsection{\bf Convergence to equilibrium for energy solutions in the
$(\lambda,p)$-geodesically convex case} The following enhanced
result on the convergence to equilibrium for energy solutions is the
 \emph{doubly nonlinear}   counterpart
to~\cite[Thm.~2.4.14]{Ambrosio05}, which was proved for gradient
flows in the $(\lambda,2)$-geodesically convex case.
\begin{theorem}[Exponential decay to equilibrium]
\label{thm:3} Assume~\eqref{A1}--\eqref{A5}, and suppose further
that
\begin{equation}
\label{lambda-convex} \tag{A8} \text{$\phi$ fulfills the
$(\lambda,p)$-geodesic convexity
condition~\eqref{def:l-p-geod-convex} with $\lambda
>0$.}
\end{equation}
Let $\bar{u} \in D(\phi)$ be the unique minimizer of $\phi$. Then,
every energy solution fulfills for all $t_0>0$ the exponential decay
to equilibrium estimate
\begin{equation}
\label{decay-esstimate} \frac{\lambda}{p}d^p (u(t),\bar{u}) \leq
\phi(u(t)) - \phi(\bar{u}) \leq \left( \phi(u(t_0)) -
\phi(\bar{u})\right)\exp(-\lambda p'(t-t_0)) \quad \text{for all $t
\geq t_0$.}
\end{equation}
Hence, for all $u \in \Ene$
\begin{equation}
\label{e:convergence} d_\phi (u(t),\bar{u}) \to 0 \quad \text{as $t
\to +\infty$,}
\end{equation}
and
 the global attractor of  the generalized  semiflow $\Ene$ is
the singleton $Z(\Ene)=\{ \bar{u}\}$.
 \end{theorem}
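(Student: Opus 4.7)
The plan is to reduce the theorem to Gronwall's inequality, with Proposition \ref{prop:l-p-geod}(2) providing the Polyak--Lojasiewicz-type bound as the crucial nonlinear input. The essential observation is that for an energy solution the differential identity \eqref{e:differential-equality} gives
$$ -(\phi\circ u)'(t)=|u'|^p(t)=\rsls^{p'}(u(t)) \quad \forae\, t>0, $$
so that the dissipation along the trajectory is exactly $\rsls^{p'}(u(t))$, not merely the sum $\tfrac{1}{p}|u'|^p+\tfrac{1}{p'}\rsls^{p'}$; the two forms coincide because equality is attained in Young's inequality, and this sharp form is what drives the decay rate $\lambda p'$ rather than a weaker $\lambda$.

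Setting $E(t):=\phi(u(t))-\phi(\bar u)\geq 0$, the local absolute continuity of $\phi\circ u$ gives
$$ E'(t)=-\rsls^{p'}(u(t))\qquad \forae\, t>0. $$
Proposition \ref{prop:l-p-geod}(2), applied under \eqref{lambda-convex} (which together with \eqref{A3} and \eqref{A5} guarantees existence and uniqueness of the minimizer $\bar u$), provides the bound $\lambda p'\, E(t)\leq \rsls^{p'}(u(t))$. Substituting yields
$$ E'(t)\leq -\lambda p'\, E(t)\qquad \forae\, t>0, $$
and Gronwall produces $E(t)\leq E(t_0)\,e^{-\lambda p'(t-t_0)}$ for every $t\geq t_0>0$. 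Inserting the lower bound from \eqref{e:key-estimate} then gives the full chain of inequalities in \eqref{decay-esstimate}.

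The remaining assertions follow quickly. Exponential decay of $E$ together with \eqref{e:key-estimate} forces $\phi(u(t))\to\phi(\bar u)$ and $d(u(t),\bar u)\to 0$ as $t\to+\infty$; by \eqref{A2} and \eqref{A2bis}, $d$-convergence entails $d_\sigma$-convergence, so $d_\phi(u(t),\bar u)\to 0$, proving \eqref{e:convergence}. For the attractor claim, any rest point $v\in Z(\Ene)$ arises from a constant energy solution, whence $\rsls(v)=0$; Proposition \ref{prop:l-p-geod}(2) then forces $\phi(v)=\phi(\bar u)$, and so $v=\bar u$ by uniqueness. Thus $Z(\Ene)=\{\bar u\}$ is trivially bounded in $(D(\phi),d_\phi)$, assumption \eqref{A12} holds, and Theorem \ref{thm:ene} produces a global attractor $A\subset D(\phi)$; since every trajectory converges to $\bar u$ in $d_\phi$, we conclude $A=\{\bar u\}$.

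I do not anticipate any serious obstacle: all the genuine nonlinear work is already absorbed in Proposition \ref{prop:l-p-geod}, and what remains is a one-line Gronwall argument together with a routine identification of the rest points. The only minor technical point to verify at the outset is that $\rsls$ is a strong upper gradient, so that Definition \ref{def:ly-en} applies under \eqref{lambda-convex} alone; this can be read off from the strong upper gradient property of $\ls$ granted by Proposition \ref{prop:l-p-geod}(1) together with the definition \eqref{slope1} of the relaxation.
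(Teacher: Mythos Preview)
Your proof is correct and follows essentially the same line as the paper's: define the energy gap $\Delta(t)=\phi(u(t))-\phi(\bar u)$, use \eqref{e:differential-equality} to get $\Delta'=-\rsls^{p'}(u)$, combine with \eqref{e:key-estimate} to obtain $\Delta'\leq -\lambda p'\Delta$, apply Gronwall, and then identify $Z(\Ene)=\{\bar u\}$ via \eqref{e:key-estimate}. The only cosmetic difference is that the paper argues the attractor claim directly from \eqref{e:convergence} and $Z(\Ene)=\{\bar u\}$ rather than routing through Theorem~\ref{thm:ene}; your way is fine too, and you correctly flag the need to check that $\rsls$ is a strong upper gradient so that Definition~\ref{def:ly-en} applies.
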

 \noindent
 Notice that the $(\lambda,p)$-geodesic
 assumption~\eqref{lambda-convex} has replaced
 \eqref{A12}--\eqref{sug}.
Although the \emph{proof} is an adaptation of the
argument for~\cite[Thm.~2.4.14]{Ambrosio05}, for  the reader's
convenience we shall develop it at the end of Section~\ref{ss:5.2}.
\section{Proofs}
\label{s:5} \setcounter{equation}{0}
\subsection{Proof of Theorem~\ref{teor:1}}
\label{ss:5.1}
\paragraph{\bf Proof of Theorem~\ref{semiflow}.}
In order to check  \eqref{H1}, we fix $(u_0,\varphi_0) \in
(\cx,\dcx)$.
 It follows from Theorem~\ref{thm:conve} that there exists a {\em generalized solution} $(u,\varphi)$
fulfilling
  $u(0)=u_0$  and  ${\varphi}(0)= \phi(u_0)$. We let
$$
\tilde{\varphi}(t):= \begin{cases} \displaystyle \varphi(t) \quad
\text{for $t>0$,}
\\
\displaystyle \varphi_0 \quad \text{for $t=0$.}
\end{cases}
$$
Clearly, the pair $(u, \tilde\varphi)$ still complies
with~\eqref{1}-\eqref{2} and starts from $(u_0,\varphi_0)$ as
desired.
 It  can be easily
checked that $\mathcal{S}$ fulfills the translation and
concatenation properties. What we are left with is the proof of the
upper semicontinuity \eqref{H4}. To this end, we fix a sequence $\{
(u_0^n,\varphi_0^n ) \} \subset \cx$ with $\ds (u_0^n,u_0) +
|\varphi_0^n -\varphi_0|  \to 0$ and we consider a sequence $\{(
u_n,\varphi_n) \} \subset \mathcal{S} $  such that $ u_n(0)=u_0^n $
and $\varphi_n(0)=\varphi_0^n$. Inequality~\eqref{1} reads  for all
$n \in \N$
\begin{equation}
\label{1enne}
  \frac1p \int_s^t |u_n'|^p(r) dr +
  \frac1{p'} \int_s^t
\rls^{p'} (u_n(r),\varphi_n(r))dr +  \varphi_n(t)  \leq  \varphi_n
(s) \quad \text{for all} \ \  0 \leq s \leq t.
\end{equation}
Since
\begin{equation}
\label{varphi-phi}
 \varphi_n(t) \geq
\phi(u_n(t)) \quad \forall t \geq 0 \quad \forall n \in\N,
\end{equation}

using inequality~\eqref{1enne} for $s=0$ and~\eqref{below},
 we deduce that
 \begin{equation}
\begin{gathered}
  \int_0^t |u'_n|^p(r)dr, \ \   \int_0^t \rls^{p'} (u_n(r),\varphi_n(r)) dr,\
   \  \varphi_n(t),  \ \ \phi(u_n(t))  \\
\text{are bounded uniformly with respect to $ n $ and $ t\geq
0$}.\label{stima}
\end{gathered}
\end{equation}
%Hence, by~\eqref{A11} there exists some positive constant $C$ such
%that
%\begin{equation}
%\label{d-sigma-bounded}
% \ds(u_*, u_n(t)) \leq C \quad \text{for all $n \in \N$ and $t \geq
% 0$}.
%\end{equation}
In view of~\eqref{A5},  we conclude that there exists a
$\sigma$-sequentially compact set $\mathcal{K} \subset U$ such that
$u_n (t) \in \mathcal{K}$ for all $n \in \N$ and $t \geq 0$. Then,
 by exploiting a suitably refined version of Ascoli's theorem
 (see \cite[Prop. 3.3.1,~p. 69]{Ambrosio05}), we find
a subsequence    $\{ u_n \}$ (which we do not relabel),
 a curve
$\ u \in \AC^p_\loc([0,+\infty);\U)$, and a function $A \in
L^p_\loc([0,+\infty))$ such that, also recalling \eqref{A3}
and~\eqref{stima}, one has
\begin{align}
&  u_n(t) \weak u(t) \ \ \text{for all}\ \  t \geq 0,  \ \ u(0)=u_0,
\label{conv1}
    \\
&  \liminf_{n \to + \infty} \phi(u_n(t)) \geq \phi(u(t))
   \ \ \text{for all}\ \ t \geq 0, \quad  \phi(u(0))= \phi(u_0), \label{conv12} \\
&|u_n'| \rightharpoonup A \ \ \text{in $ L^p_\loc([0,+\infty))$},
\quad A \geq |u'| \ \ \text{a.e. in $(0,+\infty)$}. \label{conv13}
\end{align}
In particular, \eqref{A2bis} yields that for all $t \geq 0$
$$
\ds(u(t),u_n(t)) \to 0 \qquad \text{ as $n \to \infty$,}
$$
 On the other hand, since, for all $n \in \N$, the function $\varphi_n$ is
non-increasing, thanks to Helly's compactness theorem (see, e.g.,
\cite[Lemma~3.3.3,~p. 70]{Ambrosio05}) and to the a priori
estimate~\eqref{stima}, there  exists a (non-relabeled) subsequence
of $\{ \varphi_n\}$ and a  non-increasing map $\varphi: [0,+\infty)
\to [0,+\infty)$ such that
%\begin{equation}
%\label{lim-varphi}
\[\varphi(t) = \lim_{n \to \infty}\varphi_n (t)
\geq \liminf_{n \to \infty} \phi(u_n (t)) \geq \phi(u(t)) \qquad
\text{for all} \ \  t\geq 0,
\]
%\end{equation}
where the second inequality is due to~\eqref{varphi-phi} whereas the
latter one to~\eqref{A3}. Hence, $\varphi(0) = \varphi_0$. We are
now in the position of passing to the limit as $n \to \infty$ in the
energy inequality~\eqref{1enne}.  Relying on
\eqref{conv1}-\eqref{conv12} and~\eqref{double-lsc}, and exploiting
Fatou's Lemma,  we conclude that the limit pair $(u,\varphi)$
fulfills~\eqref{1} for all $0 \leq s \leq t$, and \eqref{H4}
follows. \fin

\paragraph{\bf Proof of Proposition~\ref{compactness}.}
 We fix a sequence $\{(u_j,\varphi_j)\} \subset \mathcal{S}$ with
$\{(u_j(0), \varphi_j(0))\}$ $d_\cx$-bounded and a sequence $t_j \to
+\infty$. Inequality~\eqref{1} for $s=0$ and $t=t_j$ yields that
there exists a constant $C>0$ such that
\begin{equation}
\label{314} \phi(u_j(t_j)) \leq \varphi_j (t_j) \leq \varphi_j(0)
\leq C \quad \forall j \in \N.
\end{equation}
Thanks to~\eqref{A5}
 we conclude that there exists a
$\sigma$-sequentially compact set $\mathcal{K}' \subset U$ such that
$u_j (t_j) \in \mathcal{K}'$ for all $j \in \N$. On the other hand,
using~\eqref{314} and recalling that $\phi$ is  bounded from below
(cf. with~\eqref{below}),   we find that $\sup_j | \varphi_j (t_j)|
<+\infty$. Hence, the sequence $\{(u_j(t_j),\varphi_j (t_j))\} $
admits a $\dcx$-converging subsequence.

The projection on the second
 component $\pi_2 : \cx \to \R$ is a {\em Lyapunov function} for the
 semiflow $\mathcal{S}$. Indeed, $\pi_2$ is clearly continuous and
 decreases along the elements of $\mathcal{S}$ since for all $(u,\varphi) \in
 \mathcal{S}$ the map $\varphi$ is non-increasing. Moreover, let $(v,\psi)$
 be a complete orbit such that there exists $\bar{\psi} \in \Rz$ with
 $\pi_2 (v(t),\psi(t))= \psi(t) \equiv \bar{\psi}$ for all  $t \in
 \R$. Then, \eqref{1} yields
 $$
\frac1p \int_s^t |v'|^p(r) dr+\frac1{p'} \int_s^t
\rls(v(r),\bar{\psi}) dr \leq 0 \qquad \forall  s \leq t.
 $$
Hence, by the properties of the metric derivative we easily conclude
that there exists $\bar{v} \in D(\phi)$ such that $v(t) = \bar{v}$
for all  $t \in
 \R$, so that $(v,\psi)$ is a stationary orbit.

Finally,  the check that the set $Z(\mathcal{S})$
in~\eqref{rest-point} is the set of rest points is immediate.
 \fin
 %%%%%%%%%%%%%%%%%%%%%%%%%%%%%%%%%%
 \\
The \emph{proof} of Theorem~\ref{teor:1} follows from
Theorem~\ref{thm:ball2} and~ Proposition~\ref{compactness}.

\subsection{Proof of Theorem~\ref{thm:ene}}
\label{ss:5.2} \noindent \textbf{[Ad $1).$]}\,   Since $\mathcal{S}$
complies with~\eqref{H1}, so does $\Ene=\pi_1 (\mathcal{S})$ thanks
to Proposition~\ref{comp-sol}.
% For any $u_0 \in
%D(\phi)$, thanks to Theorem~\ref{thm:conve} there exists a {\em
%generalized solution} $(u,\varphi)$ with
%$(u(0),\varphi(0))=(u_0,\phi(u_0))$; by , the curve $u$ is  an {\em
%energy solution}, yielding that  complies with.
Properties~\eqref{H2} and~\eqref{H3} can be trivially checked too.
We also note that, in view of Definition~\ref{def:ly-en}, any
{energy solution} $u$ is continuous on $[0,+\infty)$ with values in
the space $(D(\phi), \du)$. In particular {(C0)}, {(C1)}, and {(C3)}
hold.

 In order to
prove~\eqref{H4}, we fix a sequence $\{ u_n \} \subset \pi_1(
\mathcal{S})$
 such that   $\du (u_n(0), u_0) \to 0$ for some $u_0 \in
D(\phi)$. This entails that  the lifted sequence $ \{ (u_n (0),
\phi(u_n(0))) \} $ fulfills $\dcx (\mathcal{U}(u_n(0)),
\mathcal{U}(u_0) ) \to 0 $ as $n \to+ \infty$. Thanks
to~Theorem~\ref{semiflow}, there exists $(u,\varphi) \in
\mathcal{S}$ with $u(0)=u_0$ and $\varphi(0)= \phi(u_0)$,
 and a subsequence $\{ n_k\}$
 such that for all $t \geq 0 $ $\dcx (\mathcal{U}(u_{n_k}(t)), (u(t),\varphi(t))) \to
 0$.
In view of Propositions~\ref{comp-sol} and\ref{comp-sol2}, the curve
$u$ is an {energy solution} and $\varphi(t) = \phi(u(t))$ for all
 $t>0$, so that
 \[
 \begin{gathered}
\du (u_{n_k}(t), u(t))= d_{\sigma} (u_{n_k}(t), u(t)) +
|\phi(u_{n_k}(t)) - \phi(u(t))| \to 0 \nonumber\\
 \text{as $k \to \infty$}
\ \text{for all} \ \  t >0,
 \end{gathered}
 \]
and the proof of (H4) is completed. The check of (C2) follows easily
from the same arguments as in the proof of Theorem \ref{semiflow}. In
particular, as long as one restricts to energy solutions, it is easy
to establish estimate \eqref{stima} and Ascoli's Theorem \cite[Prop.
3.3.1, p. 69]{Ambrosio05} in metric spaces entails the desired
convergence. %%%%%%%%%%%
\medskip
\\
\textbf{[Ad $2).$]}\, We shall prove that $\pi_1(A)$
 is compact in the phase space $(D(\phi), \du)$ and  that it is
 invariant and attracting for $\Ene$. To this aim,
 for every $t \geq 0$, we denote by $\mathcal{T}(t)$ ($T_1 (t)$,
resp.) the operator associated with the semiflow $\mathcal{S}$ (with
$\Ene$, resp.) by formula~\eqref{eq:operat-T}. It follows
from~Proposition~\ref{comp-sol} that, for all $t >0$,  $
\mathcal{T}(t) \mathcal{X} \subset \mathcal{U} (D(\phi))$. Hence
$\mathcal{U} (D(\phi))$ is positively invariant for the semiflow
$\mathcal{S}$, i.e.
\begin{equation}
\label{pos-inva} \mathcal{T}(t) (\mathcal{U} (D(\phi))) \subset
\mathcal{U} (D(\phi)) \qquad \text{for all} \ \  t \geq 0.
\end{equation}
As a consequence, the global attractor $A$ of $\mathcal{S}$ fulfills
$A \subset \mathcal{U} (D(\phi))$. Hence,
 by~\eqref{ohserve}  we have
\begin{equation}
\label{att-up} A=\mathcal{U} (\pi_1 (A)).
\end{equation}
Moreover, as  the projection operator is continuous from $(
\mathcal{U} (D(\phi)), \dcx)$ to $(D(\phi), \du)$, we conclude that
the set $\pi_1 (A)$ is compact as well. Again
using~Proposition~\ref{comp-sol}, it is not difficult to check that
the operators $\mathcal{T}(t)$
 and $T_1 (t)$ are related in the following way:
 \begin{equation}
\label{4.14} T_1 (t) (B) = \pi_1 (\mathcal{T}(t) (\mathcal{U} (B)))
\qquad \text{for all} \ \ B \subset D(\phi) \quad \text{for all} \ \  t \geq 0.
 \end{equation}
Hence, thanks to~\eqref{att-up} and using that $A$ is invariant for
the semiflow $\mathcal{S}$ we have
$$
T_1 (t) (\pi_1 (A)) =\pi_1 (\mathcal{T}(t) (\mathcal{U} (\pi_1
(A))))= \pi_1 (\mathcal{T}(t) (A)) = \pi_1 (A) \quad \forall t \geq
0,
$$
so that $\pi_1 (A)$ is itself invariant for $\Ene$. Finally, we fix
a bounded set $B \subset (D(\phi), \du)$.
Recalling~\eqref{dist-migliore}, we deduce that  the lifted set
\begin{equation}
\label{up-lifted-bounded} \text{ $\mathcal{U} (B)$ is bounded in
$(\cx, \dcx)$.}
\end{equation}
 Hence,
 \begin{equation}
 \label{4.16}
 \begin{aligned}
\lim_{t \to +\infty} e_{\phi}(T_1 (t) (B), \pi_1 (A)) & = \lim_{t
\to +\infty} e_{\phi}(\pi_1 (\mathcal{T}(t) (\mathcal{U} (B))),
\pi_1 (A))\\ & = \lim_{t \to +\infty} e_{\cx} (\mathcal{U} (\pi_1
(\mathcal{T}(t) (\mathcal{U} (B)))), \mathcal{U} (\pi_1 (A)))\\ &  =
\lim_{t \to +\infty} e_{\cx} (\mathcal{T}(t) (\mathcal{U} (B)), A)=0
\end{aligned}
\end{equation}
where the first identity follows from~\eqref{4.14}, the second one
from~\eqref{dist-migliore}, the third one from~\eqref{ohserve},
\eqref{pos-inva}, and the last one from the fact that $A$ attracts
the bounded sets of $(\cx, \dcx)$ and
from~\eqref{up-lifted-bounded}. By~\eqref{4.16}, $\pi_1 (A) $ has
the same attracting property in $(D(\phi), \du)$,
and~\eqref{attractor} follows. \fin
\paragraph{\bf Proof of Theorem~\ref{thm:3}.}
% First of all, we remark that
Notice that \eqref{A3} and \eqref{A5} guarantee that $\phi$ has at
least a minimizer $\bar{u} \in \U$, which is unique by the
$(\lambda,p)$-geodesic convexity~\eqref{lambda-convex}. Furthermore,
the set of the  rest points of the
 semiflow $\Ene$ is given by
\begin{equation}\label{rest-pts}
 Z(\Ene) = \{ \bar{u} \}.
 \end{equation} Indeed,
 there holds
 \[
0 \leq \phi(w) - \phi(\bar{u}) \leq 0 \qquad \text{for all $w \in
Z(\Ene)$,}
 \]
where the first inequality ensues from the fact that $\bar{u}$ is
the minimizer of $\phi$, while the second one  follows from
\eqref{e:key-estimate} and $\rsls(w)=0$, being $w$ a rest point.
Then, $\phi(w) = \phi(\bar{u})$, whence $w=\bar{u}$
by~\eqref{lambda-convex}.

To check \eqref{decay-esstimate} one argues along the very same
lines as in the proof of~\cite[Thm.~2.4.14]{Ambrosio05}. Namely, for
every $u \in \Ene$ and all $t>0$ one sets $\Delta(t):= \phi(u(t)) -
\phi(\bar{u})$, noticing that
\[
\Delta'(t)= - \rsls^{p'}(u(t)) \qquad \forae\, t \in (0,+\infty),
\]
cf. with~\eqref{e:differential-equality}. Combining this
with~\eqref{e:key-estimate}, one obtains the differential inequality
\[
\Delta'(t) \leq -\lambda p \Delta(t) \qquad \forae\, t \in
(0,+\infty),
\]
whence the second inequality in~\eqref{decay-esstimate}. The first
one ensues from the first of~\eqref{e:key-estimate}. Then,
\eqref{e:convergence} is a trivial consequence
of~\eqref{decay-esstimate} via~\eqref{A2}. Clearly, \eqref{rest-pts}
and \eqref{e:convergence} yield that the attractor for $\Ene$  is
given by $\{\bar{u}\}$.
 \fin

\section{Applications to doubly nonlinear  equations in Banach
spaces}  \label{s:6} \noindent  Hereafter, we shall denote by

\[
 \text{$\Banach$  a (separable) reflexive Banach
space, with norm $\| \cdot \|$.}
\]
\subsection{Doubly nonlinear
evolutions  driven  by   nonconvex energies} \label{ss:6.1}
 \noindent Let $\phi: \Banach
\to (-\infty,+\infty]$ be  a proper functional complying
with~\eqref{A3}--\eqref{A5}
  in the ambient
space
\begin{equation}
\label{ambient-banach}
 \text{$ \U=\Banach, $ with $d(u,v)= \| v-u\|$ and  $\sigma $ the strong
 topology}
\end{equation}
(see the  following   Sec.~\ref{ss:6.2} for a different choice). In
this framework, we shall extend the discussion of
Section~\ref{ss:cvx-gflows} to the doubly nonlinear differential
inclusion
\begin{equation}
\label{ex:noncvx-dne} \rmJ_p(u'(t)) + \lmsbd \phi(u(t)) \ni 0 \qquad
\text{in $\Banach'$} \quad \forae\,t \in (0,T)\,,
\end{equation}
which features the \emph{limiting subdifferential} $\lmsbd \phi$ of
$\phi$ (cf. with~\eqref{def:lmbd}), a generalized gradient notion
related to the strong-weak closure of  the  Fr\'echet
subdifferential~\eqref{def:frechet} of $\phi$.
\paragraph{\bf Literature on gradient flows with limiting subdifferentials.}
Being $\phi$ nonconvex, the Fr\'echet subdifferential  is not, in
general, strongly-weakly closed  in $\Banach \times \Banach'$
 in the sense of graphs.  It is hence
  meaningful to consider its strong-weak closure (along sequences
with bounded energy) $\lmsbd \phi$, defined at some $u \in D(\phi)$
by
\begin{equation}
\label{def:lmbd} \xi \in \lmsbd \phi(u) \ \  \Leftrightarrow \quad
\begin{cases}
\displaystyle \exists\, u_n \in \Banach,\xi_n\in\Banach' \
\text{with }
 \xi_n\in
\partial \phi(u_n) \ \text{for all $n\in \N$,}\\
\displaystyle
  u_n\to v,\
  \xi_n \weakto \xi \ \text{as $n\to+\infty$}, \   \ \sup_n\phi(u_n)
  <+\infty\,.
  \end{cases}
\end{equation}
In analogy with~\eqref{e:argmin}, for $u \in D(\phi)$ we shall use
the notation
  \begin{equation}
  \label{e:not-min-sel}
\begin{cases}
\displaystyle
  \|\lmsbd^\circ \phi (u)\|_*=\inf\left\{\| \xi \|_* \, : \ \xi \in \lmsbd^\circ \phi(u)  \right\}
  \\
  \displaystyle
\lmsbd^\circ \phi (u)= \argmin\left\{\| \xi \|_* \, : \ \xi \in
\lmsbd^\circ \phi(u)  \right\}\,.
\end{cases}
\end{equation}
Notice that, in general,  the latter set may be empty.
%  the minimal section
%and the minimal norm of $\lmsbd \phi$, respectively).
 The limiting  subdifferential, first introduced
in~\cite{Krueger-Mordukhovich80, Mordukhovich84},   was proposed as
a replacement of the Fr\'echet subdifferential for gradient flow
equations in Hilbert spaces, driven by \emph{nonconvex} energy
functionals, in the paper~\cite{Rossi-Savare04}. Therein, existence
and approximation results for the gradient flow equation
\begin{equation}
\label{ex:noncvx-gflow} u'(t) + \lmsbd \phi(u(t)) \ni 0 \qquad
\text{in $\Hilbert$} \quad \forae\,t \in (0,T)\,,
\end{equation}
(corresponding to $p=2$ and to $\Banach=\Hilbert$, $\Hilbert$  a
 separable Hilbert space, in~\eqref{ex:noncvx-dne}) were obtained, and applications to various PDEs were
developed, under the standing assumptions~\eqref{A3}--\eqref{A5}, the hypothesis
that $\phi$ satisfies the \emph{continuity property }
\begin{equation}
\label{eqn:cont} u_n\rightarrow u, \;\;\;\sup_n\big(\|\lmsbd^\circ
\phi (u_n)\|_*, \phi(u_n)\big)<+\infty \;\;\Rightarrow
\phi(u_n)\rightarrow\phi(u),
\end{equation}
and that   the chain rule w.r.t. $\lmsbd \phi$ holds, which we already state in the general $(p,p')$-case
\begin{equation}
\label{e:chain-rule}
 \begin{gathered}
    \text{if}\ \
    u\in {\AC}^p([0,T];\Banach),\ \xi\in L^{p'}(0,T;\Banach'),\
    \xi(t)\in \lmsbd \phi(u(t))\quad  \forae\, t\in (0,T)\\
    \text{then}\  \  \phi\circ u \in \AC([0,T]),\quad
    \tfrac d{dt}\phi(u(t))=\langle\xi(t),u'(t)\rangle\quad \forae \, t \in (0,T).
  \end{gathered}
\end{equation}
The subsequent paper~\cite{Rossi-Segatti-Stefanelli08} addressed the
long-time behavior  of a slightly less general version of
equation~\eqref{ex:noncvx-gflow} (see~\eqref{ex:noncvx-gflow-strong}
below), in which  $\lmsbd \phi$ was replaced by a strengthened
variant, the \emph{strong limiting subdifferential} $\slmsbd \phi$,
defined at some $u \in D(\phi)$ by
%\begin{equation}
%\label{def:slmbd}
\[\xi \in \slmsbd \phi(u) \ \  \Leftrightarrow \quad
\begin{cases}
\displaystyle \exists\, u_n \in \Banach,\xi_n\in\Banach' \
\text{with } \xi_n\in
\partial \phi(u_n) \ \text{for all $n \in \N,$}
\\
\displaystyle
  u_n\to v,\
  \xi_n \to \xi \ \text{as $n \to \infty$}, \ \   \phi(v_n) \to
  \phi(v)\,.
  \end{cases}
  \]
%\end{equation}
%(for $\slmsbd \phi$ as well  we shall use the notation
%$\slmsbd^\circ \phi (\cdot)$ and $\|\slmsbd^\circ \phi
%(\cdot)\|_*$).
In fact, it was shown in~\cite[Lemma~1]{Rossi-Segatti-Stefanelli08}
that $\lmsbd \phi$ is the (sequential) strong-weak closure of
$\slmsbd \phi$ along sequences with bounded energy, namely for all
$u \in D(\phi)$
\begin{equation}
\label{e:s-w-closure}
 \xi \in  \lmsbd\phi(u)
\ \Longleftrightarrow \ \exists \, u_k\in \Banach,  \ \ \xi_k\in
\Banach': \
\begin{cases} \displaystyle
u_k \to u, \, \xi_k \weakto \xi, \,  \sup_k \phi(u_k)
<+\infty,\\
\displaystyle
  \xi_k
 \in \slmsbd \phi(u_k) \ \text{for all $k \in \N.$}
\end{cases}
\end{equation}
 Assuming~\eqref{A3}--\eqref{A5} and
\eqref{eqn:cont}--\eqref{e:chain-rule},  the existence of the global
attractor for the semiflow associated with
\begin{equation}
\label{ex:noncvx-gflow-strong} u'(t) + \slmsbd \phi(u(t)) \ni 0
\qquad \text{in $\Hilbert$} \quad \forae\,t \in (0,T)\,,
\end{equation}
 was proved in~\cite{Rossi-Segatti-Stefanelli08} under the further
condition that the set of the rest points
\[
\begin{gathered}
 \left\{u \in D(\phi)\, : \ 0 \in \slmsbd
\phi(u)  \right\} \  \text{is bounded in the phase
space~\eqref{phase-space}.} \end{gathered}
\]
\paragraph{\bf Existence of the global attractor for  \eqref{ex:noncvx-dne} via energy solutions.}
Following the outline of Section~\ref{ss:cvx-gflows}, we shall
analyze~\eqref{ex:noncvx-dne} from a metric point of view. Namely,
in Proposition~\ref{prop:link-between} below we shall prove that the
energy solutions of the metric $p$-gradient flow of $\phi$, in the
ambient space~\eqref{ambient-banach}, yield solutions
to~\eqref{ex:noncvx-dne}. Further, we shall  show that,
 if $\phi$ is in addition $(\lambda,q)$-convex, namely if
\eqref{p-conv-Ban} holds for some $\lambda \in \R$ and  $q \in
(1,\infty)$,
 then energy solutions in fact exhaust the set of solutions
to~\eqref{ex:noncvx-dne}. Hence, we shall deduce from
Theorem~\ref{thm:ene} a result on the long-time behavior of the
solutions of~\eqref{ex:noncvx-dne}, see
Theorem~\ref{thm:appli-gflows} later on.

Our starting point is the following key  proposition.

\begin{proposition}
\label{prop:key1} In the setting of \eqref{ambient-banach}, suppose
that $\phi: \Banach \to (-\infty,+\infty]$ complies
with~\eqref{A3}--\eqref{A5}.
\begin{enumerate}
\item Then,
\begin{equation}
\label{eq:key-ineq} \| \lmsbd^\circ \phi (u) \|_* \leq \rsls (u)
\qquad \text{for all $u \in D(\rsls)$.}
\end{equation}
\item  If, in addition, $\phi$ is $(\lambda,q)$-convex  for some $\lambda \in
\R$ and $q \in (1,\infty)$,  then for all $u \in D(\phi)$
\begin{subequations}
\label{e:lambda}
\begin{equation}
\label{e:lambda-a}
\partial \phi(u) =\slmsbd \phi(u) = \lmsbd \phi(u), \qquad
\rsls (u)= \ls(u) \,,
\end{equation}
\begin{equation}
\label{e:lambda-b}
 \| \partial^\circ \phi (u) \|_*=  \| \lmsbd^\circ \phi (u) \|_*=
 \rsls (u)= \ls(u)\,.
\end{equation}
Furthermore, the local slope $\ls$ is a strong upper gradient.
\end{subequations}
\end{enumerate}
\end{proposition}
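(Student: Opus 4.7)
The plan is to produce an element $\xi \in \lmsbd \phi(u)$ with $\|\xi\|_* \leq \rsls(u)$ by a recovery-plus-Ekeland construction. First, I would invoke the very definition~\eqref{slope1} of the strong relaxed slope to select a recovery sequence $u_n \to u$ in norm with $\phi(u_n) \to \phi(u)$ and $\ls(u_n) \to \rsls(u)$ (after diagonal extraction). The key step is then to approximate each $u_n$ by a point $\tilde u_n$ at which the Fréchet subdifferential is non-empty, with $\|\partial^\circ \phi(\tilde u_n)\|_* \leq \ls(u_n) + \eps_n$ for some vanishing $\eps_n$. This will be carried out via Ekeland's variational principle applied to the perturbed functional $v \mapsto \phi(v) + (\ls(u_n) + \eps_n)\|v - u_n\|$, which by the very definition of the local slope admits $u_n$ as an approximate minimizer on a small ball; the perturbation parameters will be tuned so that $\tilde u_n$ lies in a small neighbourhood of $u_n$ both in norm and in energy. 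Reflexivity of $\Banach$ then yields a weakly convergent subsequence $\xi_n \weakto \xi$ (with $\xi_n \in \partial \phi(\tilde u_n)$ and $\|\xi_n\|_* \leq \ls(u_n) + \eps_n$), and the definition~\eqref{def:lmbd} of $\lmsbd \phi$ (noting $\tilde u_n \to u$ in norm and $\sup_n \phi(\tilde u_n) < \infty$) gives $\xi \in \lmsbd \phi(u)$, with $\|\xi\|_* \leq \liminf_n \|\xi_n\|_* \leq \rsls(u)$. The main obstacle is the Ekeland step: extracting a Fréchet subgradient of $\phi$ at $\tilde u_n$ from the minimality of a non-smooth perturbed functional requires a careful fuzzy sum-rule argument, for which a renorming of $\Banach$ into an equivalent Fréchet-smooth norm will be convenient to differentiate the Ekeland perturbation.

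\textbf{Part (2), subdifferential equalities.} The key preliminary fact is that the $(\lambda,q)$-convexity of $\phi$ self-improves every Fréchet subgradient to a global linear-plus-$q$-power bound:
\[
\xi \in \partial \phi(u) \quad \Longrightarrow \quad \phi(v) \geq \phi(u) + \langle \xi, v-u\rangle + \frac{\lambda}{q}\|v-u\|^q \qquad \forall\, v \in \Banach.
\]
This is obtained by plugging the segment $t \mapsto (1-t)u + tv$ into the $(\lambda,q)$-convexity inequality, dividing by $t$, and letting $t \to 0^+$ while using~\eqref{def:frechet}. From this inequality I can derive the strong-weak sequential closedness of $\partial \phi$ along sequences with bounded energy: if $u_n \to u$, $\xi_n \weakto \xi$, $\xi_n \in \partial \phi(u_n)$ and $\sup_n \phi(u_n) < \infty$, then fixing $v$ and passing to the $\liminf$ in the displayed inequality (invoking lower semicontinuity of $\phi$) gives the same inequality for $(u,\xi)$, which implies $\xi \in \partial \phi(u)$ since $\tfrac{\lambda}{q}\|v-u\|^q = o(\|v-u\|)$ as $v \to u$. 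Choosing $v = u$ also yields $\phi(u_n) \to \phi(u)$, so the argument simultaneously shows $\lmsbd \phi(u) \subseteq \slmsbd \phi(u) \subseteq \partial \phi(u)$; the reverse inclusions being tautological, the three subdifferentials coincide on $D(\phi)$.

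\textbf{Part (2), slope equalities and strong upper gradient.} Finally, I would invoke Proposition~\ref{prop:l-p-geod} with exponent $q$ in place of $p$: in the Banach ambient~\eqref{ambient-banach}, geodesics reduce to line segments, so $(\lambda,q)$-geodesic convexity and $(\lambda,q)$-convexity coincide, and the proposition delivers the representation formula for $\ls$, the $d$-lower semicontinuity of $\ls$, and the fact that $\ls$ is a strong upper gradient. Since $\sigma$ here is the norm topology, $d$-lower semicontinuity coincides with $\sigma$-lower semicontinuity; combined with~\eqref{slope1} and a constant recovery sequence, this yields $\rsls(u) = \ls(u)$. For the chain of equalities~\eqref{e:lambda-b}, the inequality $\ls(u) \leq \|\partial^\circ \phi(u)\|_*$ is a direct consequence of the displayed global bound on Fréchet subgradients: rearranging gives $(\phi(u)-\phi(v))/\|v-u\| + (\lambda/q)\|v-u\|^{q-1} \leq \|\xi\|_*$ for every $\xi \in \partial \phi(u)$, and taking the supremum over $v \neq u$ together with Proposition~\ref{prop:l-p-geod} yields the claim. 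The reverse inequality $\|\partial^\circ \phi(u)\|_* = \|\lmsbd^\circ \phi(u)\|_* \leq \rsls(u) = \ls(u)$ then follows by combining part (1) with the identity $\partial \phi = \lmsbd \phi$ established above, which closes the chain.
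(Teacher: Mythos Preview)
Your proposal is correct, and for Part~(2) it actually spells out in full the argument that the paper only outsources to~\cite{Rossi-Mielke-Savare08}: the self-improvement of Fr\'echet subgradients to the global inequality $\phi(v)\ge\phi(u)+\langle\xi,v-u\rangle+\tfrac{\lambda}{q}\|v-u\|^q$, the strong--weak closedness of $\partial\phi$ (hence $\partial\phi=\slmsbd\phi=\lmsbd\phi$), and the closing of the slope chain via Proposition~\ref{prop:l-p-geod} combined with Part~(1). All of that is fine.

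For Part~(1), however, you take a genuinely different route from the paper. The paper does \emph{not} use Ekeland. Instead it invokes the Moreau--Yosida duality formula for the local slope \cite[Lemma~3.1.5]{Ambrosio05}: for each $u_k$ in the recovery sequence one picks $r_j^k\downarrow 0$ and minimizers
\[
z_j^k\in\argmin_{v\in\Banach}\Big\{\frac{\|v-u_k\|^2}{2r_j^k}+\phi(v)\Big\},
\]
so that $\ls^2(u_k)=\lim_j\|z_j^k-u_k\|^2/(r_j^k)^2$ and $z_j^k\to u_k$, $\phi(z_j^k)\to\phi(u_k)$. After a Fr\'echet-smooth renorming of $\Banach$ (or, in the appendix, via Mordukhovich's fuzzy sum rule), the Euler equation gives $w_j^k\in\rmJ_2((z_j^k-u_k)/r_j^k)\cap(-\partial\phi(z_j^k))$ with $\|w_j^k\|_*\to\ls(u_k)$; diagonalizing and passing to a weak limit yields the element of $\lmsbd\phi(u)$ with norm~$\le\rsls(u)$. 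Your Lipschitz penalization $v\mapsto\phi(v)+(\ls(u_n)+\eps_n)\|v-u_n\|$ reaches the same endpoint, but note that Ekeland is in fact superfluous in your scheme: the very definition of the local slope makes $u_n$ an \emph{exact} local minimizer of that functional, so you may apply the fuzzy sum rule directly at~$u_n$. The trade-off is that the paper's quadratic penalization interfaces cleanly with the metric theory of~\cite{Ambrosio05} (the resolvent/De Giorgi interpolant machinery), while your approach is shorter and avoids the slope duality lemma, at the cost of invoking the Asplund-space fuzzy sum rule at the outset.
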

\begin{proof}
Preliminarily, we recall that, being reflexive, $\Banach$ has a
renorm which is  Fr\'echet differentiable off the origin. Therefore,
up to switching to an equivalent norm, we may suppose that the map
\begin{equation}
\label{frechet-renorm} v \in  \Banach \mapsto \frac12 \| v \|^2
\quad \text{is Fr\'echet differentiable on $\Banach$}
\end{equation}
(however, in the Appendix we are going to present  a proof which
does not use~\eqref{frechet-renorm}).
\\
 \textbf{Proof
of~\eqref{eq:key-ineq}.}  It is not restrictive to suppose  that
$\rsls (u)<+\infty$: hence, (up to further extractions) we can
select a sequence $\{ u_k \} \subset \Banach$ such that
\begin{equation}
\label{e:by-definition}
  u_k \to u, \qquad \phi(u_k) \to \phi(u), \qquad \ls(u_k) \to
  \rsls(u)\,.
\end{equation}
Now,   we invoke~\cite[Lemma~3.1.5]{Ambrosio05}, which provides a
duality formula for the local slope $\ls$: for every $k \in \N$,
there exists a sequence $\{r_j^k\}_j$, with
 $r_j^k \down 0$ as $j \up \infty$,  and a selection
\begin{equation}
\label{e:minimizzazione} z_j^k \in \argmin_{v \in \Banach} \left\{
\frac{\|v -u_k\|^2}{2r_j^k} + \phi(v) \right\},
\end{equation}
such that
\begin{equation}
\label{convs} \ls^2(u_k) =\lim_{j \up \infty} \frac{\|z_j^k
-u_k\|^2}{(r_j^k)^2}, \quad \text{and} \quad z_j^k \to u_k \
\text{as $j \up \infty$.}
\end{equation}  Notice that
\eqref{e:minimizzazione} yields $ \phi(z_j^k) \leq \phi(u_k)$ for
all $j \in \N$, whereas, by the convergence of $\{ z_j^k \}_j$
in~\eqref{convs} and  the lower semicontinuity of $\phi$, we gather
$\liminf_{j} \phi(z_j^k) \geq \phi(u_k)$, so that
\begin{equation}
\label{e:conv-energies} \phi(z_j^k)\to \phi(u_k) \qquad \text{as $j
\to \infty$.}
 \end{equation} Furthermore, \eqref{frechet-renorm} ensures that  there holds
the following sum rule for the Fr\'echet subdifferential
\[
\frsbd \left( \frac{\| \cdot -u_k\|^2}{2r_j^k} + \phi\right) =
\rmJ_2 \left(\frac{\cdot-u_k}{r_j^k} \right) + \frsbd \phi\,.
\]
Hence, we may conclude that  for all $j \in \N$ $z_j^k$ fulfills the
Euler equation
%\begin{equation}
%\label{e:euler}
\[ \rmJ_2 \left(\frac{z_j^k-u_k}{r_j^k}  \right) + \frsbd\phi(z_j^k)
\ni 0\,,
\]
%\end{equation}
namely
\begin{equation}
\label{e:euler-relaxed}  \exists\, w_j^k \in \rmJ_2
\left(\frac{z_j^k-u_k}{r_j^k} \right) \cap \left(-
\frsbd\phi(z_j^k)\right)
 \,.
\end{equation}
Thus, in view of \eqref{convs} and the definition of $\rmJ_2$,
\begin{equation}
\label{e:crucial-info} \left(\ls(u_k)\right)^2 = \lim_{j \up
\infty}\|w_j^k \|_*^2\,.
\end{equation}
By a diagonalization procedure,  collecting  \eqref{convs},
\eqref{e:conv-energies}, and~\eqref{e:crucial-info},  we may extract
subsequences $\{w_{j_k}^k\}$, $\{z_{j_k}^k\}$
 ($\{w_{k}\}$, $\{z_{k}\}$ for short) such that for all $k \in \N$
 \begin{equation}
 \label{e:diagonalization}
\Big(|\|w_k \|_* -\ls(u_k)|+\| z_k -u_k \| + |\phi(z_k)
-\phi(u_k)|\Big) \leq \frac1k.
\end{equation}
 Thus, in view of
\eqref{e:by-definition} we find that $z_k \to u$, $\phi(z_k) \to
\phi(u)$ and that there exists $w \in \Banach^*$ such that, up the
extraction of a  further non-relabeled
 subsequence, $w_k \weakto w$ in $\Banach^*$. We readily
conclude from~\eqref{e:euler-relaxed} and from the definition of
$\lmsbd\phi$ that
\begin{equation}
\label{limiting-in-the-end} -w \in \lmsbd \phi(u).
\end{equation}
On the other hand,  with \eqref{e:diagonalization}
and~\eqref{e:by-definition} we find
\begin{equation}
\label{e:ineq-lim} \|w\|_{\Banach^*} \leq \liminf_{k \to \infty}
\|w_k \|_*= \liminf_{k \to \infty} \ls(u_k)=  \rsls(u)
\end{equation}
Combining~\eqref{limiting-in-the-end} with~\eqref{e:ineq-lim} we
arrive at~\eqref{eq:key-ineq}.
\\
\textbf{Proof of~\eqref{e:lambda}.} Relations~\eqref{e:lambda} have
been proved in~\cite[Prop.~5.6]{Rossi-Mielke-Savare08} for
$\lambda$-convex functionals. Mimicking the proof
of~\cite[Prop.~5.6]{Rossi-Mielke-Savare08} and    taking into
account Proposition~\ref{prop:l-p-geod}, it is easy to
extend~\eqref{e:lambda} to the $(\lambda,q)$-convex case. In the
same way, the last statement follows from the very same arguments as
in  the proof of~\cite[Prop.~5.11]{Rossi-Mielke-Savare08}.
\end{proof}
%%%%%%%%%%%%%%%%%%%%%%%%%%%%
\begin{proposition}
\label{prop:link-between} In the setting of \eqref{ambient-banach},
suppose that $\phi: \Banach \to (-\infty,+\infty]$ complies
with~\eqref{A3}--\eqref{A5}.
\begin{enumerate}
\item If  $\phi$ also  fulfills the chain rule \eqref{e:chain-rule}
with respect to $\lmsbd \phi$ and
\begin{equation}
\label{min-non-empty} \lmsbd^\circ \phi(u) \neq \emptyset \qquad
\text{for all $u \in D(\lmsbd \phi)$,}
\end{equation}
  then
\begin{equation}
\label{e:sug} \text{$\rsls$ is a strong upper gradient.}
\end{equation}
Further, any energy solution $u \in \AC^p ([0,T]; \Banach)$ of the
metric $p$-gradient flow of $\phi$ is also a solution of the doubly
nonlinear equation~\eqref{ex:noncvx-dne} and fulfills the minimal
section principle
\begin{equation}
\label{e:min-sec-princ} -\lmsbd^\circ \phi(u(t)) \subset \rmJ_p
(u'(t)) \qquad \forae\, t \in (0,T)\,.
\end{equation}
\item
 In addition,   if $\phi$ fulfills~\eqref{p-conv-Ban}  for some $\lambda \in \R$ and $q \in (1,\infty)$,
   a curve $u \in
\AC^p ([0,T]; \Banach)$ is an energy solution \emph{if and only if}
the map $t \mapsto \phi(u(t))$ is absolutely continuous on $(0,T)$
and
 $u$
fulfills~\eqref{e:min-sec-princ}.
\end{enumerate}
\end{proposition}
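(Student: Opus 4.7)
I split the proposition into its two parts, exploiting Proposition \ref{prop:key1} as the main input.

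\emph{Part 1: strong upper gradient and necessity of \eqref{ex:noncvx-dne}.} Fix $u \in \AC_\loc^p([0,+\infty);\Banach)$. Using the hypothesis \eqref{min-non-empty} together with a measurable selection argument (Aumann/Kuratowski--Ryll-Nardzewski applied to the weakly closed set-valued map $t \mapsto \lmsbd^\circ \phi(u(t))$), I produce a Borel map $t \mapsto \xi(t) \in \lmsbd^\circ \phi(u(t))$ with $\|\xi(t)\|_* = \|\lmsbd^\circ \phi(u(t))\|_*$. By \eqref{eq:key-ineq},
\[
\|\xi(t)\|_* \le \rsls(u(t)) \quad \text{for a.a. } t \in (0,+\infty).
\]
On any subinterval where $\int_s^t \rsls(u(r))|u'|(r)\,\dd r < +\infty$ (outside which \eqref{e:2.3} is trivially true), a localization argument (for instance restricting to $\{t : \rsls(u(t)) \le n\}$ and then passing $n \to \infty$) places $\xi$ in $L^{p'}_\loc$ and so the chain rule \eqref{e:chain-rule} gives $\phi \circ u \in \AC$ with $(\phi\circ u)'(t) = \langle \xi(t), u'(t)\rangle$ a.e. Then $|(\phi\circ u)'(t)| \le \|\xi(t)\|_* |u'|(t) \le \rsls(u(t))|u'|(t)$, which upon integration is precisely the strong upper gradient inequality for $\rsls$.

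To deduce \eqref{ex:noncvx-dne} and \eqref{e:min-sec-princ} for an energy solution $u$, I combine the chain-rule identity $(\phi\circ u)'(t) = \langle \xi(t), u'(t)\rangle$ with the maximal-slope identity \eqref{maximal-slope-weak-tris}. Applying the Cauchy--Schwarz-type bound in duality and Young's inequality yields the chain
\[
-\tfrac{1}{p}|u'|^p(t) - \tfrac{1}{p'}\rsls^{p'}(u(t)) = \langle \xi(t), u'(t)\rangle \ge -\|\xi(t)\|_* |u'|(t) \ge -\tfrac{1}{p}|u'|^p(t) - \tfrac{1}{p'}\|\xi(t)\|_*^{p'} \ge -\tfrac{1}{p}|u'|^p(t) - \tfrac{1}{p'}\rsls^{p'}(u(t)),
\]
so equality holds throughout almost everywhere. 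Equality in Young's inequality forces $\|\xi(t)\|_*^{p'} = |u'|^p(t)$, equality in the duality step forces $\langle -\xi(t),u'(t)\rangle = \|\xi(t)\|_* \|u'(t)\|$, and the equality $\|\xi(t)\|_* = \rsls(u(t))$ comes out as a by-product; by the definition \eqref{p-dual} of $\rmJ_p$, these three equalities are precisely $-\xi(t) \in \rmJ_p(u'(t))$. Since $\xi(t)$ was an arbitrary minimal-norm selection in $\lmsbd^\circ\phi(u(t))$, this proves both \eqref{ex:noncvx-dne} and \eqref{e:min-sec-princ}.

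\emph{Part 2: equivalence under $(\lambda,q)$-convexity.} Under the $(\lambda,q)$-convexity assumption, Proposition \ref{prop:key1}(2) gives $\lmsbd\phi = \partial\phi$, $\rsls = \ls$, $\|\lmsbd^\circ\phi(u)\|_* = \ls(u)$, and the local slope is already a strong upper gradient, so the chain rule \eqref{e:chain-rule} is at our disposal. The ``only if'' direction is a particular case of Part 1. For the converse, assume $u \in \AC^p$, $\phi\circ u \in \AC$, and $-\xi(t) \in \rmJ_p(u'(t))$ for some measurable selection $\xi(t) \in \lmsbd^\circ\phi(u(t))$. Unwinding \eqref{p-dual} gives $\langle \xi(t),u'(t)\rangle = -\|u'(t)\|^p$ and $\|\xi(t)\|_*^{p'} = \|u'(t)\|^p$; together with $\|\xi(t)\|_* = \rsls(u(t))$ (from Proposition \ref{prop:key1}(2)), this reads $\|\xi(t)\|_*|u'|(t) = \tfrac{1}{p}|u'|^p(t) + \tfrac{1}{p'}\rsls^{p'}(u(t))$. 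The chain rule then turns $(\phi\circ u)'(t) = \langle \xi(t),u'(t)\rangle$ into the pointwise maximal-slope identity \eqref{maximal-slope-weak-tris}, so that $u$ is an energy solution.

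\emph{Main obstacle.} The nontrivial technical step is the measurable selection of $\xi(t)$ together with the summability needed to invoke \eqref{e:chain-rule}: for an energy solution one has $\rsls\circ u \in L^{p'}_\loc$ for free, but when checking the strong upper gradient property on a generic $\AC^p$ curve, one only knows $\int \rsls(u)|u'|\,\dd r < +\infty$, and one must localize to sublevels of $\rsls\circ u$ (or invoke a refined local version of the chain rule as in \cite{Rossi-Mielke-Savare08}) to transfer the bound into $L^{p'}_\loc$ integrability of $\xi$.
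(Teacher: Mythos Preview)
Your proof is correct and follows essentially the same route as the paper's: both use the key inequality~\eqref{eq:key-ineq} from Proposition~\ref{prop:key1} together with the chain rule~\eqref{e:chain-rule} to obtain the strong upper gradient property, then combine a measurable selection $\xi(t)\in\lmsbd^\circ\phi(u(t))$ with the maximal-slope identity and the equality case in Young's inequality to force $-\xi(t)\in\rmJ_p(u'(t))$; the converse under $(\lambda,q)$-convexity is handled in both via the identity~\eqref{e:lambda-b}. If anything, your treatment of the strong upper gradient on a generic $\AC^p$ curve (via localization to sublevels of $\rsls\circ u$) is more careful than the paper's, which simply declares~\eqref{e:sug} a ``straightforward consequence'' of~\eqref{e:chain-rule} and~\eqref{eq:key-ineq}.
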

\begin{proof}
Property~\eqref{e:sug} is a straightforward consequence of the chain
rule~\eqref{e:chain-rule} and of inequality~\eqref{eq:key-ineq}. The
proof of the subsequent statement relies on the same argument
as~\cite[Prop.~1.4.1]{Ambrosio05}: we shall however sketch it for
the reader's convenience. It follows
from~\eqref{maximal-slope-weak-tris} and~\eqref{eq:key-ineq} that
any energy solution $u \in \AC^p ([0,T]; \Banach)$ fulfills (note
that $u$ is almost everywhere differentiable, hence
$|u'|(\cdot)\equiv \| u'(\cdot)\|$  a.e. in $(0,T)$)
\begin{equation}
\label{e:m-cr}
\begin{cases} \displaystyle
t \mapsto \|\lmsbd^\circ \phi(u(t)) \|_* \in L^{p'} (0,T),
\\
\displaystyle
 (\phi \circ
u)'(t) \leq  -\frac1p \|u'(t)\| - \frac1{p'} \|\lmsbd^\circ
\phi(u(t)) \|_*^{p'} \qquad \forae\, t \in (0,T)\
\end{cases}
\end{equation}
On the other hand, arguing in the same way as in the proof
of~\cite[Lemma~3.4]{Rossi-Savare04} and also exploiting
\eqref{min-non-empty} and  the first of~\eqref{e:m-cr}, we find that
there exists a selection $\xi_{\textrm{min}} \in L^{p'}
(0,T;\Banach^*)$ in the multi-valued map  $t \mapsto \lmsbd^\circ
\phi(u(t))$. The chain rule~\eqref{e:chain-rule} yields $(\phi \circ
u)' = \langle \xi_{\textrm{min}}, u' \rangle $ a.e. in $(0,T)$.
Combining this with the inequality in~\eqref{e:m-cr}, we
conclude~\eqref{e:min-sec-princ}.

  The converse implication may be proved in the
  $(\lambda,q)$-convex
  case by a completely analogous argument, relying on
  identity~\eqref{e:lambda-b}.
\end{proof}
%%%%%%%%%%%%%%%%%%%%%%%%%
\begin{remark}
\label{rmk:metric-sols} \upshape On behalf of the above result, we
are entitled to refer to the energy solutions of the metric
$p$-gradient flow of $\phi$ as the \emph{metric solutions}
of~\eqref{ex:noncvx-dne}.  It follows from the proof of Proposition~\ref{prop:link-between} that  a curve $u \in \AC^p ([0,T];
\Banach)$ is a   \emph{metric solution} of~\eqref{ex:noncvx-dne} if
and only if
%\begin{equation}
%\label{e:charact}
\[ -(\phi \circ u)'(t)  =  \| u'(t) \| \rsls(u(t)) = \| u'(t) \| \|
\lmsbd^\circ \phi (u(t)) \|_* \ \ \forae\, t \in (0,T)\,.
\]
%\end{equation}
\end{remark}
%\begin{solopermoi}
%usare energy solutions? Non e' misleading? (lo sono tutte se ho la
%chain rule!) e forse anche energy si potrebbe sostituire con
% curves of maximal slope.. boh
% \\
% Per quel che riguarda l'esistenza dell'attrattore per le energy
%solutions di~\eqref{ex:noncvx-dne}, bisogna richiedere che valga la
%limitatezza dell'insieme~\eqref{rest-point-bis} dei rest-points.
%Grazie alla~\eqref{eq:key-ineq}, la limitatezza
%di~\eqref{rest-point-bis}  \`e implicata dalla  limitatezza
%dell'insieme
%\[
%\{ u \in D(\phi)\, : \ 0 \in \lmsbd \phi(u) \}
%\] ma chiaramente questa \`e una condizione pi\`u forte della
%condizione del lavoro~\cite{Rossi-Segatti-Stefanelli08}.. che fare?
%\end{solopermoi}
\noindent The following result is a direct consequence of
Theorem~\ref{thm:ene} and the previous
Proposition~\ref{prop:link-between}.
\begin{theorem}
\label{thm:appli-gflows} In the setting of \eqref{ambient-banach},
suppose that $\phi: \Banach \to (-\infty,+\infty]$ complies
with~\eqref{A3}--\eqref{A5}, with the conditional
continuity~\eqref{cond-cont}, and with~\eqref{A12}. Then,
\begin{enumerate}
\item if $\phi$ also complies with~\eqref{min-non-empty} and
\begin{equation}
\label{e:bound-rest}
\begin{gathered}
\text{the set of the rest points} \  \  \left\{u \in D(\phi)\, : \ 0
\in \lmsbd \phi(u) \right\} \\ \text{is bounded in the phase
space~\eqref{phase-space},}
\end{gathered}
\end{equation}
 then
 the semiflow associated with the
\emph{metric solutions} of equation~\eqref{ex:noncvx-dne} (cf. with
Remark~\ref{rmk:metric-sols}) admits a global attractor;
\item if $\phi$ is
$(\lambda,q)$-convex  for some $\lambda \in \R$ and $q \in
(1,\infty)$   and complies with~\eqref{e:bound-rest},
  the semiflow generated by the \emph{whole} set of solutions
to~\eqref{ex:noncvx-dne} admits a global attractor.
\end{enumerate}
\end{theorem}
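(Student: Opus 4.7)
The plan is to verify the hypotheses of Theorem~\ref{thm:ene} in the Banach-space setting \eqref{ambient-banach} and then identify the resulting semiflow of energy solutions with the one generated by (a subset of) solutions to the doubly nonlinear inclusion \eqref{ex:noncvx-dne}. The topological assumptions \eqref{A1}, \eqref{A2}, \eqref{A2bis} are automatic once we take $d(u,v)=\|u-v\|$, $\sigma$ the strong topology, and $d_\sigma:=d$, while \eqref{A3}--\eqref{A5}, the conditional continuity \eqref{cond-cont}, and the boundedness of rest points \eqref{A12} are part of the standing hypotheses of the theorem. Hence the key point in both parts is to secure \eqref{sug} (that $\rsls$ is a strong upper gradient) and to translate the outcome of Theorem~\ref{thm:ene} back into the language of \eqref{ex:noncvx-dne}.

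For part~(1), under the chain rule \eqref{e:chain-rule} and \eqref{min-non-empty}, Proposition~\ref{prop:link-between}(1) yields both \eqref{e:sug} and the inclusion $\Ene \subset \{\text{solutions of }\eqref{ex:noncvx-dne}\}$ via the minimal section principle \eqref{e:min-sec-princ}; by definition these are precisely the metric solutions (cf.\ Remark~\ref{rmk:metric-sols}). All hypotheses of Theorem~\ref{thm:ene} are now in place, so $\Ene$ is a generalized semiflow on $(D(\phi),\du)$ admitting a global attractor. To see that \eqref{A12} does follow from \eqref{e:bound-rest}, I combine \eqref{rest-point-bis} with the key inequality \eqref{eq:key-ineq} of Proposition~\ref{prop:key1}(1): if $\rsls(\bar u)=0$ then $\|\lmsbd^{\circ}\phi(\bar u)\|_{*}=0$, so $0\in\lmsbd\phi(\bar u)$, and thus $\pi_1(Z(\mathcal S))$ is contained in the set bounded by \eqref{e:bound-rest}; the lift $\bar u\mapsto(\bar u,\phi(\bar u))$ is isometric by \eqref{dist-migliore}, hence $Z(\mathcal S)$ is bounded in $(\cx,\dcx)$.

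For part~(2), the $(\lambda,q)$-convexity of $\phi$ activates Proposition~\ref{prop:key1}(2): the identifications \eqref{e:lambda-a}--\eqref{e:lambda-b} give $\rsls=\ls$, and the last sentence of that proposition states that $\ls$ is a strong upper gradient, so \eqref{sug} holds \emph{unconditionally} (without needing \eqref{min-non-empty}). The decisive gain is Proposition~\ref{prop:link-between}(2): every solution $u\in\AC^p([0,T];\Banach)$ of \eqref{ex:noncvx-dne} along which $\phi\circ u$ is absolutely continuous is already an energy solution, and thanks to \eqref{e:lambda-a} the chain rule \eqref{e:chain-rule} is free, so \emph{every} solution to \eqref{ex:noncvx-dne} falls in $\Ene$. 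Consequently the semiflow of all solutions of \eqref{ex:noncvx-dne} coincides with $\Ene$, and \eqref{e:bound-rest} reduces (again via \eqref{e:lambda-b}) to the boundedness of $\{u:\rsls(u)=0\}$, so \eqref{A12} is automatic. Theorem~\ref{thm:ene} then delivers the global attractor.

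The only genuinely delicate step is the one performed upstream in Proposition~\ref{prop:key1}(1), namely the inequality $\|\lmsbd^{\circ}\phi(u)\|_{*}\le\rsls(u)$: this is what both guarantees \eqref{e:sug} (together with the chain rule) and converts the analytic rest-point bound \eqref{e:bound-rest} into the abstract bound \eqref{A12}. Once \eqref{eq:key-ineq} and the identifications \eqref{e:lambda} are granted, the present theorem is essentially a transcription of Theorem~\ref{thm:ene} into the doubly nonlinear Banach language, with the equivalence in Proposition~\ref{prop:link-between}(2) providing the bridge for the ``whole set of solutions'' statement in~(2).
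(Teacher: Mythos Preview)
Your overall strategy matches the paper's: the statement is meant to follow directly from Theorem~\ref{thm:ene} once Proposition~\ref{prop:link-between} bridges the metric and the subdifferential formulations, and you spell out correctly how \eqref{e:bound-rest} implies \eqref{A12} via \eqref{eq:key-ineq} and \eqref{rest-point-bis}. Two points deserve comment.

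\textbf{Part (1): the chain rule.} You are right that Proposition~\ref{prop:link-between}(1) needs the chain rule \eqref{e:chain-rule} in order to secure \eqref{sug}, and the theorem as stated does not list it among the hypotheses. This is an omission in the statement rather than a flaw in your reasoning; without \eqref{e:chain-rule} there is no route in the paper to \eqref{sug} in the nonconvex case.

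\textbf{Part (2): the ``whole set of solutions''.} Here your invocation of Proposition~\ref{prop:link-between}(2) is not quite what that proposition says. It characterises energy solutions as curves with $\phi\circ u$ absolutely continuous \emph{and} satisfying the minimal section principle \eqref{e:min-sec-princ}, not merely the inclusion \eqref{ex:noncvx-dne}. A priori a solution of \eqref{ex:noncvx-dne} could select a non-minimal $\xi\in\lmsbd\phi(u)=\partial\phi(u)$. The missing step is short but genuine: for any solution of \eqref{ex:noncvx-dne} one has $-\xi(t)\in\rmJ_p(u'(t))$, hence $\|\xi(t)\|_*^{p'}=\|u'(t)\|^{p}$, and the chain rule (valid in the $(\lambda,q)$-convex case) gives $-(\phi\circ u)'=\langle -\xi,u'\rangle=\|u'\|^{p}$. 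Since $\|\xi\|_*\ge\|\partial^\circ\phi(u)\|_*=\ls(u)=\rsls(u)$ by \eqref{e:lambda-b}, one obtains $-(\phi\circ u)'\ge \tfrac1p|u'|^p+\tfrac1{p'}\rsls^{p'}(u)$, i.e.\ the energy inequality \eqref{e:1.1.3-bis}; combined with \eqref{sug} this yields that $u$ is an energy solution, and a posteriori the minimal section holds. With this one extra line your argument for part (2) is complete.
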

\subsection{Outlook to quasivariational doubly nonlinear equations}
\label{ss:6.2} \noindent Finally, we show how our results can be
applied to the study of the long-time behavior of a class of doubly
nonlinear equations of  the form
\begin{equation}
\label{e:quasivar}
\partial \Psi(u(t),u'(t)) + \lmsbd \phi(u(t)) \ni 0  \qquad
\text{in $\Banach'$} \quad \forae\,t \in (0,T)\,,
\end{equation}
where the dissipation functional $\Psi: \Banach \times \Banach
\to [0,+\infty)$ also depends on the state variable $u$ and which
are often referred to as  \emph{quasivariational},
 cf.~\cite{Rossi-Mielke-Savare08} and the references
therein.

In particular, following~\cite{Rossi-Mielke-Savare08}, where the
metric approach was applied to prove  the existence of solutions to
the Cauchy problem for~\eqref{e:quasivar}, we shall focus on the
case the functional $\Psi$ is given by
\begin{equation}
\label{e:psi} \Psi(u,v) = \frac{\eta_u (v)^p}{p} \qquad \text{for
all $u,\, v \in \Banach$, with $1<p<\infty$ and}
\end{equation}
\begin{equation}
\label{def-eta}
\begin{gathered}
 \{ \eta_u\}_{u \in \Banach} \quad \text{a family of
norms on $\Banach$, such that}
\\
\exists\, \mathsf{K}>0 \ \ \forall\, u,\,v \in \Banach\, : \ \
\mathsf{K}^{-1} \| v \| \leq \eta_u(v) \leq  \mathsf{K}^{-1} \| v \|
\end{gathered}
\end{equation}
and the dependence $u \mapsto \eta_u$
 is continuous  in the sense of \textsc{Mosco}-convergence (see, e.g., \cite[Sec.~3.3,~p.~295]{attouch}), namely
 \begin{subequations}
\begin{align}
   \label{eq:Mosco1}
  u_n\to u,\quad v_n\weakto v\quad\text{in }\Banach
  \quad&\Rightarrow\quad
  \liminf_{n\to\infty}\eta_{u_n}(v_n)\ge \eta_u(v),
\\
  \label{eq:Mosco2}
  u_n\to u,\quad v\in \Banach\quad&\Rightarrow\quad
  \exists\, v_n\to v:\quad
  \lim_{n\to\infty}\eta_{u_n}(v_n)=\eta_u(v).
\end{align}
\end{subequations}
For all $u \in \Banach$, we shall denote by $\eta_{u*}$ the related
dual norm on $\Banach'$.

 Within this framework, the metric approach
to~\eqref{e:quasivar} may be developed by endowing the ambient space
\begin{equation}
\label{e:ambient9}
\begin{gathered}
 \U=\Banach \ \  \text{with the Finsler distance
induced by $\{ \eta_u \}_{u \in \Banach}$, i.e.}
\\
d_\eta(v,w):= \inf \left\{ \int_0^1
    \eta_{u(t)}({u'(t)}) \dd t \, : \ u \in \mathrm{AC}([0,1]; \Banach),  \,
    \, u(0)=v, \, u(1)=w \right\}
\end{gathered}
\end{equation}
for all $ v,\,w \in \Banach.$ Then, we have the analogue of
Proposition~\ref{prop:link-between}
(see~\cite[Prop.~8.2]{Rossi-Mielke-Savare08} for the proof).
\begin{proposition}
\label{p:quasiv} In the setting
of~\eqref{def-eta}--\eqref{e:ambient9}, suppose that $\phi: \Banach
\to (-\infty,+\infty]$ complies with~\eqref{A3}--\eqref{A5}, that
 $\phi$  fulfills the chain rule \eqref{e:chain-rule}
with respect to $\lmsbd \phi$, and that \eqref{min-non-empty} holds.

Then,  any energy solution $u \in \AC^p ([0,T]; \Banach)$ of the
metric $p$-gradient flow of $\phi$ is also a solution of the doubly
nonlinear equation~\eqref{e:quasivar} with $\Psi$ given
by~\eqref{e:psi},  and fulfills the minimal section principle
\begin{equation}
\label{e:min-sec-princ-quasi}   \partial \Psi({u(t)},{u'(t)})
\supset
  \argmin\Big\{\eta_{u(t)*}({-\xi}): \xi\in \lmsbd(u(t))\Big\}
 \qquad \forae\, t \in (0,T)\,.
\end{equation}
Conversely,   if $\phi$ fulfills~\eqref{p-conv-Ban} for some
$\lambda \in \R$ and $q \in (1,\infty)$,   a curve $u \in \AC^p
([0,T]; \Banach)$ is an energy solution
 \emph{if and only if}   the map $t \mapsto \phi(u(t))$ is
absolutely continuous on $(0,T)$, and
 $u$
fulfills~\eqref{e:min-sec-princ-quasi}.
\end{proposition}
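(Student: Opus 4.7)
The argument mirrors that of Proposition~\ref{prop:link-between}, but is carried out throughout in the Finsler metric space $(\Banach, d_\eta)$ in place of the normed space. Four ingredients are needed.

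First, I would identify the metric derivative along $d_\eta$ with the Finsler length element: any $u \in \AC^p([0,T];(\Banach,d_\eta))$ is, by the uniform equivalence \eqref{def-eta}, also absolutely continuous as a Banach-valued map, and its $d_\eta$-metric derivative satisfies $|u'|(t) = \eta_{u(t)}(u'(t))$ for a.e.\ $t$. This identification rests on the Mosco continuity \eqref{eq:Mosco1}--\eqref{eq:Mosco2} and is essentially that of~\cite{Rossi-Mielke-Savare08}.

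Second, I would prove the Finsler analogue of \eqref{eq:key-ineq}:
\begin{equation*}
\min\{\eta_{u*}(-\xi) : \xi \in \lmsbd \phi(u)\} \le \rsls(u) \qquad \forall u \in D(\rsls).
\end{equation*}
The scheme of Proposition~\ref{prop:key1} applies verbatim, provided that the Moreau--Yosida-type regularisation in \eqref{e:minimizzazione} is performed with $\tfrac{1}{2r_j^k}\eta_{u_k}(\,\cdot\,-u_k)^2$ in place of the squared norm. The resulting Euler selection then lies in $-\frsbd \phi(z_j^k)$ and is controlled in the moving dual norm $\eta_{u_k*}$ by the incremental slope. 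Passing to the limit as in Proposition~\ref{prop:key1}, \eqref{eq:Mosco1} ensures $\eta_{u*}(-w) \le \liminf_k \eta_{u_k*}(-w_k)$, while $-w \in \lmsbd\phi(u)$ follows from $\phi(z_k)\to \phi(u)$ and the closure property in~\eqref{def:lmbd}.

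Third, for the forward implication I would combine three items: the energy identity \eqref{e:differential-equality} for the metric $p$-gradient flow, which in view of step one reads
\begin{equation*}
-(\phi\circ u)'(t) \;=\; \tfrac1p\, \eta_{u(t)}(u'(t))^p + \tfrac1{p'}\, \rsls^{p'}(u(t)) \qquad \forae\, t;
\end{equation*}
the existence, using step two and \eqref{min-non-empty}, of a measurable selection $\xi(t) \in \lmsbd \phi(u(t))$ of minimal $\eta_{u(t)*}$-norm, such that the chain rule \eqref{e:chain-rule} gives $(\phi\circ u)'(t) = \langle \xi(t), u'(t)\rangle$ a.e.; and the Finsler Young inequality
\begin{equation*}
-\langle \xi(t), u'(t)\rangle \;\le\; \tfrac1p \eta_{u(t)}(u'(t))^p + \tfrac1{p'} \eta_{u(t)*}(-\xi(t))^{p'}.
\end{equation*}
Together with step two these force equality throughout, which (i) identifies $-\xi(t)$ as a minimiser of $\eta_{u(t)*}({-}\,\cdot\,)$ on $\lmsbd \phi(u(t))$, and (ii) via the equality case in Young and the characterisation of $\partial \Psi(u,\cdot) = \partial(\tfrac1p \eta_u^p)$ places $-\xi(t) \in \partial \Psi(u(t), u'(t))$. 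Together these amount to \eqref{e:min-sec-princ-quasi}.

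Fourth, for the converse under \eqref{p-conv-Ban}, I would mirror the $(\lambda,q)$-convex part of Proposition~\ref{prop:link-between}: the Finsler counterpart of \eqref{e:lambda} turns the inequality of step two into an equality, so that plugging \eqref{e:min-sec-princ-quasi} into the chain rule \eqref{e:chain-rule} and applying the equality case of the Finsler Young inequality reconstructs the metric energy identity, hence the energy-solution property.

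The main obstacle is step two: because the dependence $u \mapsto \eta_u$ means that the minimal selection of $\lmsbd\phi(u)$ is measured in a \emph{moving} dual norm, one cannot invoke Proposition~\ref{prop:key1} as a black box. The Mosco convergence \eqref{eq:Mosco1}--\eqref{eq:Mosco2} is exactly what is required to pass to the limit in the Euler equation of the localised variational problem, but it must be paired carefully with the weak closure of $\lmsbd\phi$ along sequences with bounded energy.
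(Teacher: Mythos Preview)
Your proposal is correct and follows precisely the strategy the paper itself indicates: the paper does not give an independent proof of this proposition but simply refers to \cite[Prop.~8.2]{Rossi-Mielke-Savare08}, remarking that it is ``the analogue of Proposition~\ref{prop:link-between}'' in the Finsler setting. Your four-step outline---identifying $|u'|(t)$ with $\eta_{u(t)}(u'(t))$, proving the Finsler version of \eqref{eq:key-ineq} via the localised variational scheme and Mosco convergence, and then running the chain rule plus Young-inequality argument in both directions---is exactly the adaptation that reference carries out, so there is nothing to add.
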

\noindent Hence,   we derive from  our general Theorem~\ref{thm:ene}
the quasivariational counterpart to Theorem~\ref{thm:appli-gflows}.
To avoid overburdening this paper, we prefer to omit the statement
that, under the assumptions of Proposition~\ref{p:quasiv} and the
boundedness of the set of the rest points~\eqref{e:bound-rest}, the
generalized semiflow associated with the metric solutions
to~\eqref{e:quasivar} admits a global attractor.
\begin{example}
\upshape \label{ex:gen-allen-cahn}
 Our results  apply to the following generalized
Allen-Cahn equation
\begin{equation}
\label{appl}
 \rho(u) \, |u_t|^{p-2}u_t
 -\mathrm{div}(\beta(\nabla u))
+\mathrm{W}'(u) = 0 \quad \text{in $\Omega \times (0,T),$}
\end{equation}
 with $\Omega \subset \R^d$ a bounded domain with sufficiently
 smooth boundary, $\rho: \R \to (0,+\infty)$ a continuous function,
bounded from below and from above by positive constants, $\beta:
\R^d \to \R^d$ the gradient of some smooth function $j$ on $\R^d$,
and $\mathrm{W}: \R \to \R$ a differentiable function. For
simplicity, we supplement  \eqref{appl} with homogeneous Dirichlet
boundary conditions for $u$.

The boundary value problem for~\eqref{appl}  can be recast in the
general form~\eqref{e:quasivar} in the Banach space
\[
\Banach= L^p (\Omega),
\]
 with the choices
\begin{equation}
\label{eta-ex} \eta_{u}(v):= \left(\int_\Omega \rho(u(x)) |v(x)|^p
\dd x\right)^{1/p} \quad \text{for all $u,\,v \in L^p(\Omega)$,}
\end{equation} and
\begin{equation}
\label{phi-ex} \phi(u) := \begin{cases} \displaystyle \int_{\Omega}
\left( j(\nabla u(x)) + \mathrm{W}(u(x)) \right) \dd x  & \text{if
$u \in H_0^1 (\Omega)$, $j(\nabla u) + \mathrm{W}(u) \in L^1
(\Omega)$,}
\\
\displaystyle +\infty  & \text{otherwise.}
\end{cases}
\end{equation}
We refer to~\cite[Sec.~8.2]{Rossi-Mielke-Savare08} for the proof
of the fact that  the family of norms $\{ \eta_u\}$~\eqref{eta-ex}
fulfills~\eqref{def-eta}--\eqref{eq:Mosco2}, and  for the precise
statement of the assumptions on the nonlinearities $j$ and
$\mathrm{W}$, under which the functional $\phi$~\eqref{phi-ex}
complies with the assumptions of Proposition~\ref{p:quasiv}.
 Let us just mention that,  in particular, the classical double-well
potential $\mathrm{W}(r) = \frac{(r^2-1)^2}4$  fits into the frame
of such assumptions.
\end{example}
%%%%%%%%%%%%%%%%%%
%%%%%%%%%%%%%%%%%%
\section{Application to Curves of Maximal Slope in Wasserstein spaces}
\label{s:7} In this section, we aim to apply  our general
Theorem~\ref{thm:ene} to the  long-time analysis of the class of
diffusion equations in $\R^d$, $d \geq 1$, mentioned in the
Introduction (cf. with~\eqref{e:drift-intro-p-local-nuovo}). We
shall systematically follow
   the \emph{metric
approach} to evolutions in Wasserstein spaces developed
in~\cite{Ambrosio05}. In order to make the paper as self-contained
as possible, we recall some preliminary definitions and results on
gradient flows in Wasserstein spaces in Sec.~\ref{ss:7.1} below,
referring to~\cite{Ambrosio05}  for more details and all the proofs.
%%%%%%%%%%%%%%%%%%
%%%%%%%%%%%%%%%%%%
\subsection{Setup in Wasserstein spaces}
\label{ss:7.1}
Hereafter,
we shall denote by $\mathscr{L}^d$ the $d$-dimensional Lebesgue measure and by
$\pi^i: \R^d \times \R^d \to \R^d$, $i=1,2$, the projection on the
$i$-component.

In the following, we shall work in the space of probability
measures in $\R^d$ with finite $p$-moment, i.e. we shall take $U$ to
be
\begin{equation}
\label{def-u} U=\mathscr{P}_{p}(\R^d)= \left\{  \mu \in
\mathscr{P}(\R^d)\, : \ \int_{\R^d} |x|^p \dd\mu(x) <+\infty
\right\}\,,
\end{equation}
 endowed with the $p$-Wasserstein distance
\begin{equation}
\label{def-W_p} W_p (\mu_1,\mu_2) = \min \left\{\left( \int_{\R^d
\times \R^d} |x-y|^p \dd\gamma(x,y)\right)^{1/p}\, : \ \gamma \in
\Gamma(\mu_1,\mu_2)\right\}
\end{equation}
for all $\mu_1,\, \mu_2 \in \mathscr{P}_{p}(\R^d)$, where $
\Gamma(\mu_1,\mu_2)$ is the set of probability measures $\gamma \in
\mathscr{P}(\R^d \times \R^d)$ fulfilling the push-forward relations
$\pi^1_{\#} \gamma= \mu_1$ and $\pi^2_{\#} \gamma= \mu_2$ (namely,
for all Borel subset $B \subset \R^d $ and $i=1,2$ $\mu_i (B)=
\gamma ((\pi^{i})^{-1}(B) )$ ). We shall denote by $
\Gamma_{\textrm{o}}(\mu_1,\mu_2)$ the class of the measures $\gamma
\in \Gamma(\mu_1,\mu_2)$ attaining the minimum in the definition of
$W_p$.
 It follows from~\cite[Prop.~7.1.5]{Ambrosio05} that
the metric space $(\mathscr{P}_{p}(\R^d), W_p)$ is complete, so
that~\eqref{A1} is fulfilled. In this setting,
\begin{equation}
\label{def-sigma} \text{$\sigma$ is the
 topology of narrow convergence.}
 \end{equation}
 We recall that a sequence $(\mu_n)
\subset \mathscr{P}(\R^d)$ narrowly converges to some $ \mu \in
\mathscr{P}(\R^d)$ if for all $f \in \rmC_{\textrm{b}}^0 (\R^d)$
(the space of continuous and bounded functions on $\R^d$) there
holds
\[
\lim_{n \to \infty} \int_{\R^d} f(x) \dd \mu_n (x) = \int_{\R^d}
f(x) \dd \mu(x)\,.
\]
The narrow topology complies with~\eqref{A2} thanks
to~\cite[Lemma~7.1.4]{Ambrosio05}, while property~\eqref{A2bis}
ensues from~\cite[Rem.~5.1.1]{Ambrosio05}.

A remarkable property of absolutely continuous curves with values in
$\mathscr{P}_{p}(\R^d)$ is that they can be characterized as
solutions of the \emph{continuity equation}. More precisely
(see~\cite[Thm.~8.3.1]{Ambrosio05}), with any $\mu \in
\mathrm{AC}^p([0,T]; \mathscr{P}_{p}(\R^d))$  we can associate
  a Borel vector field $v :(x,t) \in  \R^d \times (0,T)
\mapsto v_t(x) \in \R^d$
 such that $\forae\, t \in (0,T) $
 \begin{equation}
 \label{field1}
v_t \in L^p (\mu_t; \R^d), \ \  \text{and}\  \  |\mu'|(t)= \|v_t
\|_{ L^p (\mu_t; \R^d)} = \left(\int_{\R^d} |v_t(x)|^p
\dd\mu_t(x)\right)^{1/p} \,,
\end{equation}
and  $\mu_t,\, v_t$
 fulfill the continuity equation
\begin{equation}
 \label{field2}
\partial_t \mu_t + \dive (v_t \mu_t) = 0 \qquad \text{in $\R^d \times (0,T)$}
\end{equation}
in the sense of distributions. In fact, the velocity field $v_t$
turns out to be unique in some suitable sense,
see~\cite[Chap.~8]{Ambrosio05}.

Finally, we recall that it is possible to introduce a
subdifferential notion intrinsic to the Wasserstein framework. In
fact, for the sake of simplicity   we shall not
recall~\cite[Def.~10.3.1]{Ambrosio05} in its general form, but in a
particular case.
%Nonetheless, we shall  keep to the notation
%$\boldpartial \phi$ (which was used in the general
%~\cite[Def.~10.3.1]{Ambrosio05}) to distinguish this subdifferential
%notion from the one for Banach spaces.
\begin{definition}
\label{def:ext-frechet-subdifferential} Let
$\phi:\mathscr{P}_{p}(\R^d) \to (-\infty,+\infty] $ be a proper and
lower semicontinuous functional and let $\mu \in D(\phi)$. Given a
Borel vector field $\xi : \R^d \to \R^d$, we say that $\xi$
 belongs to the \emph{(extended) Fr\'echet subdifferential} of
 $\phi$ at $\mu$, and write $\xi \in \boldpartial \phi(\mu)$, if
 \[
\begin{gathered}
\xi \in L^{p'}(\mu; \R^d), \qquad \text{and, as
$\nu \to \mu$ in $\mathscr{P}_{p}(\R^d) $,}
\\
\phi(\nu) -\phi(\mu) \geq \inf_{\gamma \in \Gamma_{\mathrm{o}}
(\mu,\nu)} \int_{\R^d \times \R^d} \langle \xi(x), y-x
\rangle\dd\gamma(x,y) + \mathrm{o}\left(W_p (\mu,\nu) \right)\,.
\end{gathered}
 \]
 \end{definition}
 \noindent
 In agreement with   notation~\eqref{e:not-min-sel}, we define
 \[
 \begin{cases}
 \displaystyle
\| \boldpartial^\circ \phi(\mu) \|_{L^{p'}(\mu; \R^d)}:=\inf\{\| \xi
\|_{L^{p'}(\mu; \R^d)}\, :   \ \xi \in  \boldpartial \phi(\mu) \},
\\
\displaystyle \boldpartial^\circ \phi(\mu):= \argmin\{\| \xi
\|_{L^{p'}(\mu; \R^d)}\, :   \ \xi \in  \boldpartial \phi(\mu) \}\,.
\end{cases}
 \]

\paragraph{\bf Gradient flows in Wasserstein spaces.}
Hereafter, we shall focus on proper and lower semicontinuous
functionals $\phi: \mathscr{P}_{p}(\R^d) \to (-\infty,+\infty]$,
fulfilling some coercivity condition which we do not specify, for it
is implied by our general lower semicontinuity/compactness
conditions~\eqref{A3}--\eqref{A5}, and such that $\phi$ is
\emph{regular}, i.e. for all $(\mu_n) \subset \mathscr{P}_p (\R^d)$,
with $\varphi_n=\phi(\mu_n)$ and $\xi_n \in
\boldpartial\phi(\mu_n)$, there holds
\begin{equation}
\label{def:regular}
\begin{cases}
\displaystyle \mu_n \to \mu \ \text{in $\mathscr{P}_{p}(\R^d)$,}
\\
\displaystyle \varphi_n \to \varphi,
\\
\displaystyle \sup_n \| \xi_n \|_{L^{p'}(\mu_n;\R^d)} <+\infty,
\\
\displaystyle (i \times \xi_n)_{\#} \mu_n \to (i \times \xi)_{\#}
\mu   \ \ \text{narrowly in $\mathscr{P}(\R^d \times \R^d)$,}
\end{cases}
\ \ \Longrightarrow \ \ \begin{cases} \displaystyle  \xi \in
\boldpartial \phi (\mu),
\\
\displaystyle \varphi=\phi(\mu)\,,
\end{cases}
\end{equation}
($i: \R^d \to \R^d$ denoting the identity map). Notice
that~\eqref{def:regular} is  in the same spirit as the conditional
continuity~\eqref{cond-cont}.

 Using this (extended) Fr\'echet  subdifferential notion, in~\cite[Chap.~11]{Ambrosio05}
a formulation of gradient flows intrinsic to the Wasserstein
framework has been proposed: according
to~\cite[Def.~11.1.1]{Ambrosio05}, we say that a curve $\mu \in
\mathrm{AC}^p([0,T];\mathscr{P}_{p}(\R^d))$ is a solution of the
$p$-gradient flow equation driven by $\phi$ if its velocity field
$v_t$~\eqref{field1}--\eqref{field2}
 satisfies the inclusion
\begin{equation}
\label{eq:wass-gflow} \mathrm{J}_p (v_t) \in -\boldpartial
\phi(\mu_t) \qquad \forae\, t \in (0,T)\,,
\end{equation}
 (where
$\mathrm{J}_p : L^p (\mu_t; \R^d) \to L^{p'} (\mu_t; \R^d)$ denotes
the duality map)  and complies  with the \emph{minimal section
principle}
\begin{equation}
\label{e:min-sel-prin} \mathrm{J}_p (v_t) = -\boldpartial
\phi^\circ(\mu_t) \qquad \forae\, t \in (0,T)\,.
\end{equation}

Under the assumptions that $\phi$ is
 lower semicontinuous, coercive, and regular  in the sense of
 \eqref{def:regular}, \cite[Thm.~11.1.3]{Ambrosio05} provides the
 crucial link between curves of maximal slope and gradient flows in
 Wasserstein spaces: it states that
\[
\begin{gathered}
\text{
 $\mu \in \mathrm{AC}^p([0,T];\mathscr{P}_{p}(\R^d))$ is a $p$-curve of maximal slope
 w.r.t. the
local slope $\ls$}
\\
\text{if and only if $\mu_t$ is a solution of the gradient flow
equation~\eqref{e:min-sel-prin}}
\\
\text{and the map $t \mapsto \phi(\mu_t)$ is a.e. equal to a
function of bounded variation.}
\end{gathered}
 \]
Relying on this result and on our Theorem~\ref{thm:ene}, we shall
prove the existence of the global attractor for the solutions of the
gradient flow equation~\eqref{eq:wass-gflow}, driven by a functional
$\phi$ which is the sum of the internal, potential, and interaction
energy,
cf. with~\eqref{e:funz-sum} below. %%%%%%%%%%%%%%%
%%%%%%%%%%%%%%%%%%%%%%%%%%%%%%%%%%%%%%%%%
%%%%%%%%%%%%%%%
%%%%%%%%%%%%%%%%%%%%%%%%%%%%%%%%%%%%%%%%%
\subsection{The sum of the internal, potential, and interaction
energy} \label{ss:7.2} \noindent As
in~\cite[Sec.~10.4.7]{Ambrosio05} (cf.
also~\cite{Carrillo-McCann-Villani03, Carrilloal}), we consider the
following functions
\begin{equation}
\label{v1} \tag{\textrm{V}1} \begin{gathered} V: \R^d \to
(-\infty,+\infty] \ \ \text{proper, lower semicontinuous, such that}
\\
\text{its proper domain $D(V)$ has a non-empty interior $O \subset
\R^d$,}
\end{gathered}
\end{equation}
\begin{equation}
\label{f1} \tag{\textrm{F}1} \begin{gathered} F: [0,+\infty) \to \R
\ \ \text{convex, differentiable, with  $F(0)=0$,}
\\
\text{
 and satisfying
the \emph{doubling condition}}
\\
\exists\,C_F >0  \ \  \forall\, z,w \in [0,+\infty) \, : \ \ F(z+w)
\leq C_F \left( 1+ F(z) + F(w) \right)\,,
\end{gathered}
\end{equation}
\begin{equation}
\label{w1} \tag{\textrm{W}1} \begin{gathered} W: \R^d \to
[0,+\infty) \ \ \text{convex, G\^{a}teaux differentiable, even,}
\\
\text{
 and
satisfying the \emph{doubling condition}}
\\
\exists\,C_W >0  \ \  \forall\, x, y \in \R^d \, : \ \ W(x+y) \leq
C_W \left( 1+ W(x) + W(y) \right)\,,
\end{gathered}
\end{equation}
which induce the following functionals on $\mathscr{P}_{p}(\R^d)$:
\begin{align}
& \label{funcV} \mathcal{V}(\mu):= \int_{\R^d} V(x) \dd \mu(x)
\qquad \text{(potential energy),}
\\
& \label{funcF} \mathcal{F} (\mu):= \begin{cases} \displaystyle
\int_{\R^d} F(\rho(x)) \dd x  & \text{if $\mu= \rho \mathscr{L}^d$,}
\\
\displaystyle +\infty & \text{otherwise,}
\end{cases}
\qquad \text{(internal energy),}
\\
& \label{funcW} \mathcal{W}(\mu):= \frac{1}2  \int_{\R^d \times
\R^d} W(x,y) \dd (\mu \otimes\mu)(x,y) \qquad \text{(interaction energy).}
\end{align}
Hence, for given constants $c_1 \in (0,+\infty)$ and $ c_2, \, c_3
\in [0,+\infty)$ we define $\phi: \mathscr{P}_{p}(\R^d) \to
(-\infty,+\infty]$ by
\begin{equation}
\label{e:funz-sum} \phi(\mu) := c_1 \mathcal{V}(\mu) + c_2
\mathcal{F}(\mu) + c_3 \mathcal{W}(\mu)\,.
\end{equation}

We further assume that the function $V$ is $p$-coercive, i.e.
\begin{equation}
\label{v3} \tag{\textrm{V}2} \limsup_{|x| \to +\infty}
\frac{V(x)}{|x|^p} = +\infty\,,
\end{equation}
and that (cf. with condition~\cite[(2)]{Agueh2003})
\begin{equation}
\label{v2} \tag{\textrm{V}3} \text{$V$ is $(\lambda,p)$-convex on
$\R^d$ for some $\lambda \in \R$.}
\end{equation}
Notice that \eqref{v2} yields that $V$ is G\^{a}teaux-differentiable
in $O$. As for $F$, we require that
\begin{equation}
\label{f2} \tag{\textrm{F}2}
\begin{gathered}
\text{$F$ has a superlinear growth at infinity, i.e. $\limsup_{s \up
+\infty} \frac{F(s)}s = +\infty$, and}
\\
 \liminf_{s \downarrow 0} \frac{F(s)}{s^\alpha}
>-\infty \ \text{for some $\alpha>\frac{d}{d+p}$}\,,
\end{gathered}
\end{equation}
and finally that
\begin{equation}
\label{f3} \tag{\textrm{F}3} \text{the map $s \mapsto
s^{d}F(s^{-d})$ is convex and non-increasing in $(0,+\infty)$.}
\end{equation}
We associate with $F$ the function $L_F :[0,+\infty) \to
[0,+\infty)$ given by
%\begin{equation}
%\label{def-LF}
\[
 L_F(r) = rF'(r) - F(r) \qquad \text{for all $r \geq
0$.}
\]
%\end{equation}
%%%%%%%%%%%%%%%%%%%%%%
\begin{remark}
\label{rem:comment-ass} \upshape
 We point out that~\eqref{f2} yields
\begin{equation}
\label{partenegativa} \exists \, C_1, C_2 \geq 0 \ \forall\, r \geq
0\,: \quad  F(r) \geq -  C_1   - C_2 r^{\alpha}\,,
\end{equation}
($F^-$ being  the negative part of $F$), and
 refer to~\cite[Chap.~9]{Ambrosio05} for further
details on the above conditions.
 We shall just mention
(see~\cite[Rem.~9.3.10]{Ambrosio05}) that examples of functionals
complying with~(\ref{f1}, \ref{f2}, \ref{f3}) are
\begin{equation}
\label{e:7.12}
\begin{gathered}
\text{the entropy functional} \ \ F(s)=s \log(s),
\\
\text{the power functional} \  \ F(s) =\frac1{m-1}s^m\,, \ \
\text{for $m >1$.}
\end{gathered}
\end{equation}
(in fact, the case $1-1/d \leq m <1$ can also be treated, see
\cite[Rem.~9.3.10]{Ambrosio05}).
\end{remark}
\begin{remark}[$(\lambda,p)$-geodesic convexity of $\phi$]
\label{rem:impo} \upshape
 It
follows from~\cite[Props.~9.3.5,~9.3.9]{Ambrosio05} that, thanks
to~\eqref{w1} and~\eqref{f3}, the functionals $\mathcal{W}$ and
$\mathcal{F}$ are convex along geodesics in $\mathscr{P}_p (\R^d)$.
On the other hand, arguing as in the proof
of~\cite[Prop.~9.3.2]{Ambrosio05}, one verifies that the potential
energy $\mathcal{V}$ is $(\lambda,p)$-geodesically convex in
$\mathscr{P}_p (\R^d)$. We thus conclude that
%\begin{equation}
%\label{e:phi-cvx}
\[
\text{$\phi$  from ~\eqref{e:funz-sum} is $(\lambda,p)$-geodesically
convex in $\mathscr{P}_p (\R^d)$.}
\]
%\end{equation}
\end{remark}

The following proposition collects the main properties of $\phi$.
 %since its proof is based on results scattered
%throughout~\cite[Chaps.~5--10]{Ambrosio05},
 We shall just sketch its proof
for the reader's convenience.
\begin{proposition}
\label{prop:wass} In the setting
of~\eqref{def-u}--\eqref{def-sigma}, assume~\eqref{v1}--\eqref{f3}.
Then, the functional  $\phi : \mathscr{P}_{p}(\R^d) \to
(-\infty,+\infty]$  defined by~\eqref{e:funz-sum}
fulfills~\eqref{A3}--\eqref{A5} and \eqref{ass-cont}--\eqref{sug};
$\phi$ is regular  in the sense of~\eqref{def:regular},
 there holds
\begin{equation}
\label{equal-slopes} \ls(\mu) = \rsls(\mu)  \qquad \text{for all
$\mu \in D(\phi)$,}
\end{equation}
and the (extended) Fr\'echet subdifferential $\boldpartial \phi$ of
$ \phi(\mu)$ admits the following characterization at every $\mu \in
D(\phi)$:
 \begin{equation}
 \label{e:charact}
\begin{gathered}
 \xi \in \boldpartial \phi(\mu) \quad \text{if and only if} \quad \xi \in L^{p'}(\mu; \R^d) \ \
   \text{and for all $\nu \in \mathscr{P}_p(\R^d)$}
\\
\phi(\nu) -\phi(\mu) \geq \inf_{\gamma \in \Gamma_{\mathrm{o}}
(\mu,\nu)} \int_{\R^d \times \R^d} \langle \xi(x), y-x
\rangle\dd\gamma(x,y) + \frac{\lambda}{p}W_p^p (\mu,\nu)\,.
\end{gathered}
\end{equation}

 Furthermore, if $\mu
\in \mathrm{AC}_{\loc}^p([0,+\infty);\mathscr{P}_{p}(\R^d))$ is an
energy solution (in the sense of Definition~\emph{\ref{def:ly-en}}),
then there exists   $\rho : t \in [0,+\infty) \mapsto \rho_t \in
L^1(\R^d)$
 such that
%\begin{equation}
%\label{proper-rho}
\[
\begin{gathered}
 \int_{\R^d} \rho_t(x) \dd x  =1,
\ \  \int_{\R^d} |x|^p \rho_t(x) \dd x<+\infty, \ \ \mu_t=\rho_t
\mathscr{L}^d \ \ \text{for all $t \in [0,+\infty)$},
\\
L_F(\rho) \in L_{\loc}^1 (0,+\infty; W_{\loc}^{1,1}(O)),
\\
\text{the map} \ t \mapsto \left\|c_1\nabla V + c_2\frac{\nabla L_F
(\rho_t)}{\rho_t} + c_3 \nabla W \ast \rho_t\right\|_{L^{p'}
(\mu_t;\R^d)} \in L_{\loc}^p (0,+\infty)\,,
\end{gathered}
\]
%\end{equation}
(where $\ast$ denotes the convolution product), satisfying the
drift-diffusion equation with nonlocal term
\begin{equation}
\label{gflow-sum-funz}
\partial_t \rho_t - \dive \left(\rho_t j_{p'} \left( c_1\nabla V +
c_2\frac{\nabla L_F(\rho_t)}{\rho_t} + c_3 \nabla W \ast \rho_t
\right) \right)=0
\end{equation}
in $\mathscr{D}'(\R^d \times (0,+\infty))$ ($j_{p'}(r):= |r|^{p' -2} r$),  and the energy identity for all $0 \leq s
\leq t $
\begin{equation}
\label{e:enid-phi} \int_s^t \left\|c_1\nabla V + c_2 \frac{\nabla
L_F (\rho_r)}{\rho_r} + c_3\nabla W \ast \rho_r\right\|_{L^{p'}
(\mu_r;\R^d)}^p \dd r + \phi(\mu_t) = \phi(\mu_s)\,.
\end{equation}
\end{proposition}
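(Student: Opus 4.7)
The plan is to reduce the proposition to a case-by-case verification of the abstract hypotheses of Theorem~\ref{thm:ene}, systematically exploiting the $(\lambda,p)$-geodesic convexity of $\phi$ pointed out in Remark~\ref{rem:impo}, and then to invoke the Wasserstein subdifferential calculus of~\cite{Ambrosio05} to convert the abstract energy-solution notion into the PDE \eqref{gflow-sum-funz}.

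First, I would check \eqref{A3}--\eqref{A5}. Narrow lower semicontinuity of $\mathcal{V}$, $\mathcal{F}$, $\mathcal{W}$ under \eqref{v1}, \eqref{f1}--\eqref{f2} and \eqref{w1} is classical and may be quoted from \cite{Ambrosio05}. For \eqref{A5} the $p$-coercivity \eqref{v3} ensures that sublevels of $\mathcal{V}$ have uniformly $p$-integrable tails, hence are narrowly sequentially compact by Prokhorov's theorem; since \eqref{partenegativa}, combined with a standard interpolation exploiting $\alpha > d/(d+p)$, yields $\mathcal{F}(\mu) \geq -C_1 - C_2 \int_{\R^d} |x|^p \dd \mu$, and $\mathcal{W}\geq 0$ by \eqref{w1}, sublevels of $\phi$ embed into sublevels of $\mathcal{V}$ up to constants, which closes \eqref{A5}.

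Next, I would establish \eqref{ass-cont}, \eqref{sug} and \eqref{equal-slopes} using $(\lambda,p)$-geodesic convexity. Proposition~\ref{prop:l-p-geod} already gives that $\ls$ is $W_p$-lower semicontinuous, admits the representation \eqref{repr-loc-slope}, and is a strong upper gradient. To transfer this information to the narrow topology, I note that $\sup_n \phi(\mu_n) < \infty$ forces $\sup_n \mathcal{V}(\mu_n) < \infty$, which via \eqref{v3} yields uniform $p$-integrability, so that narrow convergence upgrades to $W_p$-convergence along bounded-energy sequences. The argument of Remark~\ref{rem:yields} then delivers the conditional continuity \eqref{cond-cont}, and the infimum in \eqref{slope1} may be computed along $W_p$-converging sequences, giving $\rsls = \ls$. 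The regularity property \eqref{def:regular} follows from the same narrow-to-$W_p$ upgrade by passing to the limit in the integral characterization \eqref{e:charact} of $\boldpartial\phi$ proved next.

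For \eqref{e:charact} and the PDE identification, I would assemble the known subdifferential calculus of the three summands from \cite[Chap.~10]{Ambrosio05}: each of $\mathcal{V}$, $\mathcal{F}$, $\mathcal{W}$ is geodesically convex with modulus $\lambda$, $0$, $0$ respectively, the variational inequality defining the extended Fr\'echet subdifferential holds for each, and the corresponding minimal selections are $\nabla V$, $\nabla L_F(\rho)/\rho$ (requiring the Sobolev regularity of $L_F(\rho)$ ensured by \eqref{f3}), and $\nabla W \ast \rho$. Summing produces \eqref{e:charact} together with the explicit form of $\boldpartial^\circ \phi$. Given an energy solution $\mu$, Proposition~\ref{comp-sol2} and \eqref{equal-slopes} make $\mu$ a $p$-curve of maximal slope for $\phi$ with respect to $\ls$, so by \cite[Thm.~11.1.3]{Ambrosio05} its Wasserstein velocity field $v_t$ from \eqref{field1}--\eqref{field2} satisfies \eqref{eq:wass-gflow} and the minimal section principle \eqref{e:min-sel-prin}; inserting the explicit form of $\boldpartial^\circ \phi$ into the continuity equation \eqref{field2} produces \eqref{gflow-sum-funz}, while the energy identity \eqref{e:enid-phi} is a direct transcription of \eqref{abstra-enide}. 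The main obstacle I anticipate is the sum rule for $\boldpartial\phi$: a priori one has only the inclusion $c_1\boldpartial\mathcal{V}+c_2\boldpartial\mathcal{F}+c_3\boldpartial\mathcal{W}\subset \boldpartial\phi$, and verifying that the indicated minimal selection really realizes the minimum-norm element of $\boldpartial\phi$ requires the $(\lambda,p)$-geodesic convexity of the whole $\phi$, which is precisely what forces equality.
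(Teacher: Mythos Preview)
Your plan is essentially correct and parallels the paper's proof in structure; both lean on the $(\lambda,p)$-geodesic convexity noted in Remark~\ref{rem:impo} and on the Wasserstein calculus of \cite{Ambrosio05}. There are two points where your route genuinely diverges from the paper's and where you should be a little more careful.

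\emph{Conditional continuity and \eqref{equal-slopes}.} You argue both via the implication ``narrow convergence $+$ bounded energy $\Rightarrow$ $W_p$-convergence'', then invoke the $W_p$-lower semicontinuity of $\ls$ from Proposition~\ref{prop:l-p-geod} and the mechanism of Remark~\ref{rem:yields}. The paper instead cites \cite[Prop.~10.4.14]{Ambrosio05} directly for \eqref{equal-slopes} and derives \eqref{cond-cont} from the minimal-selection characterization (their \eqref{e:minimal-selection}--\eqref{eq:subdiff}) combined with the regularity \eqref{def:regular}. Your route is cleaner in spirit, but the upgrade step hinges on reading \eqref{v3} as genuine super-$p$-coercivity (so that $\sup_n\mathcal V(\mu_n)<\infty$ yields uniform $p$-integrability of the moments, not merely $(p-\varepsilon)$-integrability). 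The paper's own compactness argument only makes explicit the weaker conclusion $\sup_n\int|x|^p\,\dd\mu_n<\infty$ and $W_{p-\varepsilon}$-compactness; to close your argument you should spell out that $\sup_n\mathcal V(\mu_n)<\infty$ together with $V(x)/|x|^p\to+\infty$ forces the $p$-moment tails to vanish uniformly. Also, your intermediate claim ``$\sup_n\phi(\mu_n)<\infty$ forces $\sup_n\mathcal V(\mu_n)<\infty$'' is not immediate since $\mathcal F$ can be negative: you need the same interpolation you used for \eqref{A5} to first bound the $p$-moments and then recover the bound on $\mathcal V$.

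\emph{The characterization \eqref{e:charact} and the sum rule.} You correctly flag the sum rule for $\boldpartial\phi$ as the delicate point. The paper does not assemble it piece by piece but cites \cite[Thm.~10.3.6]{Ambrosio05} for \eqref{e:charact} and \cite[Thms.~10.3.11,~10.4.13]{Ambrosio05} for the identification of $\boldpartial^\circ\phi(\mu)$ with the explicit field $c_1\nabla V+c_2\nabla L_F(\rho)/\rho+c_3\nabla W\ast\rho$, both of which already handle the sum functional directly under \eqref{v1}--\eqref{f3}. Your assembly-from-pieces argument would work, but you should be aware that the equality $\|\boldpartial^\circ\phi(\mu)\|=\ls(\mu)$ (needed to match the metric energy identity \eqref{abstra-enide} with \eqref{e:enid-phi}) is part of what those theorems provide and is not a formal consequence of the inclusion of subdifferentials.

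The derivation of \eqref{gflow-sum-funz} and \eqref{e:enid-phi} from \eqref{field1}--\eqref{field2}, \eqref{e:min-sel-prin} and \eqref{abstra-enide} is exactly how the paper proceeds.
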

\begin{proof}
It follows from~\cite[Lemma~5.1.7, Example~9.3.1,
Rmk.~9.3.8]{Ambrosio05}
 that the functional $\phi$ is sequentially  lower semicontinuous w.r.t. the
 narrow convergence.
 In order to prove~\eqref{A5} in the case $c_2>0$ (the proof for $c_2=0$ being analogous), let us consider a sequence $\{\mu_n=\rho_n
\mathscr{L}_d \} \subset \mathscr{P}_p(\R^d)$
 fulfilling $\sup_n \phi(\mu_n)<+\infty$.
 Using that $W$
takes nonnegative values and the coercivity
  \eqref{v3} of $V$, one finds that
\begin{equation}
\label{e2} \sup_n \left(\int_{\R^d} c_2 F(\rho_n (x)) \, {\rm d}x +
M_1 \int_{\R^d} |x|^{p} \rho_n (x) \, {\rm d}x\right) <+\infty\,,
\end{equation}
for some $M_1>0$.  On the other hand, being $(\alpha
p)/(1-\alpha)>d$ by~\eqref{f2}, one has
$$
K:= \int_{\R^d} (1+ |x|)^{-\frac{\alpha p}{1-\alpha}} \, {\rm d}x
<+\infty,
$$
and H\"{o}lder's inequality yields
$$
\begin{aligned}
 \int_{\R^d}\rho_n^{\alpha} (x) \, {\rm d}x&=
\int_{\R^d}\rho_n^{\alpha} (x)(1+|x|)^{\alpha p}(1+|x|)^{-\alpha p}
\, {\rm d}x
\\
& \leq \left( \int_{\R^d}\rho_n (x) (1+|x|)^{p}\, {\rm
d}x\right)^{\alpha}K^{1-\alpha}.
\end{aligned}
$$

Therefore, from \eqref{partenegativa} we infer that
\begin{equation}
\label{e3}
\begin{aligned}
\int_{\R^d}F(\rho_n (x)) \, {\rm d}x & \geq
 %\int_{\R^d}F^+(\rho_n (x)) \, {\rm d}x
 -C_1 -C_2 \int_{\R^d}\rho_n^{\alpha} (x) \, {\rm d}x
\\ & \geq %\int_{\R^d}F^+(\rho_n (x)) \, {\rm d}x
-C_1 -C_2 K^{1-\alpha}
2^{\alpha(p-1)} \left( 1+ \int_{\R^d}\rho_n (x) |x|^{p}\, {\rm
d}x\right)^{\alpha}
\\
& \geq%\int_{\R^d}F^+(\rho_n (x)) \, {\rm d}x
-C_1 -C_{M_1}\left( 1+
\int_{\R^d}\rho_n (x) |x|^{p}\, {\rm d}x\right)-C,
\end{aligned}
\end{equation}
the last passage following
 from a trivial application of the Young
inequality with a suitable constant $C_{M_1}>0$ depending on $M_1$
and  to be specified later. Combining \eqref{e2} and \eqref{e3}, and
choosing $C_{M_1}$ in such a way that $c_2 C_{M_1} \leq M_1/2$,
 we conclude that

\begin{equation}
\label{e4} \sup_n \left(%\int_{\R^d} \alpha_2 F^+(\rho_n (x)) \, {\rmd}x +
\frac{M_1}2 \int_{\R^d} |x|^{p} \rho_n (x) \, {\rm d}x\right)
<+\infty.
\end{equation}

Thus, the integral condition for tightness is satisfied and, by the
Prokhorov theorem,  $\{\mu_n\}$ admits a narrowly converging
subsequence. Furthermore, it follows from~\eqref{e4} that for all
$\eps >0$
 $\{\mu_n\}$ has uniformly integrable $(p-\eps)$-moments   so that,
by~\cite[Prop.~7.1.5]{Ambrosio05}, it is
 $W_{p-\eps}$-relatively compact.

It follows from Remark~\ref{rem:impo} and from
Proposition~\ref{prop:l-p-geod} that the local slope $\ls$ is a
strong  upper gradient. Furthermore, arguing as in the proof
of~\cite[Lemma~10.3.8]{Ambrosio05} one easily checks that, by
$(\lambda,p)$-geodesic convexity,  the functional $\phi$ is also
regular, and \eqref{e:charact}  can be shown by repeating the
calculations of~\cite[Thm.~10.3.6]{Ambrosio05}.
 Property~\eqref{equal-slopes} is
proved in~\cite[Prop.~10.4.14]{Ambrosio05}. In view
of~\cite[Thms.~10.3.11,~10.4.13]{Ambrosio05},  we have that for a
given measure $\mu \in \mathscr{P}_p(\R^d)$
\begin{equation}
\label{e:minimal-selection}
\begin{gathered}
 \text{$\ls(\mu)=\rsls(\mu) <+\infty$ if and
 only if $L_F(\rho) \in W_{\loc}^{1,1}(O)$, there exists}
\\
\text{a unique $w \in L^{p'} (\mu;\R^d) $ s.t. $\| w\|_{L^{p'}
(\mu;\R^d)}= \|\boldpartial^\circ \phi(\mu) \|_{L^{p'} (\mu;\R^d)}=
 \ls(\mu)$},
\end{gathered}
\end{equation}
and there holds
\begin{equation}
\label{eq:subdiff}
\begin{gathered}
 \rho w=c_1 \rho \nabla V +
 c_2 \nabla L_F(\rho) + c_3 \rho (\nabla W) \ast
 \rho \qquad \mu-\text{a.e. in } \R^d.
\end{gathered}
\end{equation}
Thus, the
conditional continuity~\eqref{cond-cont} follows from
combining~\eqref{e:minimal-selection} with the regularity
property~\eqref{def:regular}.

Finally, let $\mu \in \mathrm{AC}^p([0,T];\mathscr{P}_{p}(\R^d))$ be
a $p$-curve of maximal slope: collecting~\eqref{e:min-sel-prin},
 \eqref{e:minimal-selection},
and~\eqref{eq:subdiff}, we conclude that its velocity field
satisfies for a.a. $t \in (0,T)$
\[
-j_p(v_t) =c_1\nabla V + c_2\frac{\nabla L_F(\rho_t) }{\rho_t} +
c_3\nabla W \ast \rho_t \qquad \mu_t-\text{a.e. in } \R^d\,.
\]
 Joint with the continuity equation~\eqref{field2}, the latter
relation yields~\eqref{gflow-sum-funz}. Then, the energy
identity~\eqref{e:enid-phi} ensues from~\eqref{field1}
and~\eqref{abstra-enide}.
\end{proof}
\begin{remark}[An alternative convexity assumption]
 \label{rem:alternative}
\upshape Instead of the $(\lambda,p)$-convexity condition~\eqref{v2}
for $V$, one could impose
 the standard
$\lambda$-convexity on $V$,  with $\lambda$ having a different sign
depending on $p$, namely
\begin{equation}
\label{v2-primo} \tag{V3'} \text{$V$ $\lambda$-convex on $\R^d$,
with
 $\lambda \geq 0$ if $p \leq 2 $ or
$\lambda \leq 0$ if $p \geq 2 $.}
\end{equation}
It is proved in~\cite[Prop.~9.3.2]{Ambrosio05} that, under
\eqref{v2-primo}, the functional $\mathcal{V}$ is
$(\lambda,2)$-geodesically convex  on $\mathscr{P}(\R^d)$, and so is
$\phi$ given by~\eqref{e:funz-sum}. It is not difficult to verify
that the proof of Proposition~\ref{prop:wass} goes through upon
replacing~\eqref{v2} with~\eqref{v2-primo}.
\end{remark}
%\begin{rcomm}
%devi fare chiarezza su come chiami la lamda convessit\`a!!!
%\end{rcomm}
%%%%%%%%%%%%%%%%%%%%%
%%%%%%%%%%%%%%%%%%%
%\begin{remark}
%\label{rmk:adjustment} \upshape The proof of~\eqref{A5} shows that
%one might weaken the coercivity condition~\eqref{v3}
%provided~\eqref{f2} is suitably strengthened. Indeed
%\end{remark}
\subsection{Global attractor for drift-diffusion equations with nonlocal terms}
 \label{ss:7.3}
% It follows from Proposition~\ that we are solving the drift-diffusion
% equation....
\begin{remark}[Reduction to the $(\lambda,p)$-geodesically convex case, with $\lambda \leq 0$]
\upshape In view of Proposition~\ref{prop:wass}
 and  Theorem~\ref{thm:3}, if $\phi$  from~\eqref{e:funz-sum}   is
$(\lambda,p)$-geodesically convex with $\lambda>0$
 (which is implied by $V$ being $(\lambda,p)$-convex with
 $\lambda>0$), then the global attractor for the semiflow of the
 energy solutions of the gradient flow associated with $\phi$
reduces to the  unique minimum point of $\phi$, to which all
trajectories converge as $t \to +\infty$ with an exponential rate.

Therefore, henceforth we shall focus on  the case in which
%\begin{equation}
%\label{e:lambdaneg}
\[
\text{$V$ is $(\lambda,p)$-convex on $\R^d$, for some $\lambda \leq
0$.}
\]
%\end{equation}
%(alternatively, in the framework of~\eqref{v2-primo}, we shall
%consider the $\lambda$-convex case, with $\lambda \leq 0$ if $p \geq
%2$, $\lambda $
\end{remark}
The following result provides the last ingredient for the proof of
the existence of the global attractor for the drift-diffusion
equation~\eqref{gflow-sum-funz}.
\begin{lemma}
\label{lemma:rest-points}
 In the setting of~\eqref{def-u}--\eqref{def-sigma},
assume~\eqref{v1}--\eqref{f3}, and  that \eqref{v2} holds with
$\lambda \leq 0$. Then, the set of the rest points
\[
\begin{aligned}
 \Sigma(\phi) & =
 \left\{\bar{\mu} \in \mathscr{P}_p
(\R^d)\, : \ 0 \in \boldpartial \phi(\bar{\mu}) \right\}
\\ & =
 \left\{\bar{\mu} \in \mathscr{P}_p
(\R^d)\, : \ \begin{array}{l} \displaystyle
\bar{\mu}=\bar{\rho}\mathscr{L}_d, \
L_F(\bar{\rho}) \in W_{\loc}^{1,1}(O), \\
 \displaystyle c_1\nabla V + c_2\frac{\nabla L_F(\bar{\rho})}{\bar{\rho}} + c_3 \nabla W
\ast
 \bar{\rho}=0 \ \  \bar{\mu}-\text{a.e. in $\R^d$}
 \end{array}
 \right\}
 \end{aligned}
\]
is bounded in the phase space $(D(\phi), d_{\phi})$, $d_{\phi}$
being as in~\eqref{phase-space}.
\end{lemma}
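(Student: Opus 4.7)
The plan is to bound the $p$-th moment $M_p(\bar\mu) := \int_{\R^d}|x|^p\,\rmd\bar\mu(x)$ uniformly for $\bar\mu \in \Sigma(\phi)$, and then to deduce both a uniform $\phi$-bound and $d_\sigma$-boundedness. I would first fix a reference measure $\mu_0 \in D(\phi)$ with compact support contained in $O$ and bounded density (such a $\mu_0$ exists since $V$ is $(\lambda,p)$-convex, hence locally bounded above in $O$, which makes $\calV(\mu_0),\calF(\mu_0),\calW(\mu_0)$ all finite). For every $\bar\mu\in\Sigma(\phi)$, since $0\in\boldpartial\phi(\bar\mu)$, the characterization \eqref{e:charact} applied with $\xi\equiv 0$ and $\nu=\mu_0$ yields
\[
\phi(\bar\mu)\,\le\,\phi(\mu_0)-\tfrac{\lambda}{p}W_p^p(\bar\mu,\mu_0)
\,\le\,\phi(\mu_0)+\tfrac{|\lambda|\,2^{p-1}}{p}\bigl(M_p(\bar\mu)+M_p(\mu_0)\bigr),
\]
the last step by $|x-y|^p\le 2^{p-1}(|x|^p+|y|^p)$ together with the trivial bound $W_p^p(\bar\mu,\mu_0)\le\int|x-y|^p\rmd(\bar\mu\otimes\mu_0)$.

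Next, I would establish a matching lower bound on $\phi(\bar\mu)$ from below using the $p$-coercivity \eqref{v3}: for every $M>0$ there exists $C_M>0$ with $V(x)\ge M|x|^p-C_M$ on $\R^d$, hence $\calV(\bar\mu)\ge M\,M_p(\bar\mu)-C_M$. For the internal energy, the very estimates used in the proof of Proposition \ref{prop:wass} (relying on \eqref{partenegativa} and on the H\"older bound coming from $\alpha>d/(d+p)$) provide constants $A_1,A_2\ge 0$ such that $\calF(\bar\mu)\ge -A_1-A_2\,M_p(\bar\mu)$. Since $\calW(\bar\mu)\ge 0$ by \eqref{w1}, altogether
\[
\phi(\bar\mu)\,\ge\,(c_1M-c_2A_2)\,M_p(\bar\mu)\,-\,(c_1C_M+c_2A_1).
\]

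The crucial point is now to choose $M$ so large that $c_1M-c_2A_2-\tfrac{|\lambda|\,2^{p-1}}{p}>0$, which is possible since $c_1>0$ and the remaining constants are fixed. Combining the upper and lower bounds for $\phi(\bar\mu)$ then forces $M_p(\bar\mu)\le K$ for a constant $K$ independent of $\bar\mu\in\Sigma(\phi)$. From this uniform $p$-moment bound I would draw the two required conclusions: on one hand, reinserting into the lower bound gives that $\phi$ is bounded below on $\Sigma(\phi)$, and reinserting into the upper bound gives that $\phi$ is also bounded above, so the $|\phi(\cdot)-\phi(\cdot')|$ component of $d_\phi$ is bounded; on the other hand, $\{\bar\mu:M_p(\bar\mu)\le K\}$ is uniformly tight by Prokhorov's theorem, hence $\sigma$-sequentially precompact, hence contained in a $\sigma$-compact (thus $d_\sigma$-compact, thus $d_\sigma$-bounded) set, using that $d_\sigma$ is compatible with the narrow topology via \eqref{A2bis}. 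Both parts of $d_\phi$ being bounded on $\Sigma(\phi)$ gives the claim.

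The main obstacle I anticipate is the interplay between the three energies: the destabilizing contributions come both from the convexity defect $\lambda\le 0$ (giving a $W_p^p$-term of the wrong sign in the upper bound) and from the possibly negative internal energy $\calF$, while the only stabilizing ingredient is the potential $\calV$. The argument works precisely because the $p$-coercivity \eqref{v3} is not merely a super-polynomial bound but allows an \emph{arbitrarily large} multiplier $M$ in front of $|x|^p$, so that $c_1M$ can be made to dominate both $c_2A_2$ (from $\calF$) and $\tfrac{|\lambda|\,2^{p-1}}{p}$ (from the convexity defect) simultaneously.
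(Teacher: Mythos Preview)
Your proposal is correct and follows essentially the same strategy as the paper: both apply the subdifferential characterization~\eqref{e:charact} with $\xi=0$ against a fixed compactly supported reference measure, absorb the resulting $\tfrac{|\lambda|}{p}W_p^p$ term into the $p$-th moment, and close the estimate using the $p$-coercivity~\eqref{v3} of $V$ together with the lower bound on $\calF$ from the proof of Proposition~\ref{prop:wass}. The only cosmetic difference is that the paper invokes the optimal transport map to bound $W_p^p(\bar\mu,\tilde\mu)$, whereas you use the product coupling $\bar\mu\otimes\mu_0$; both yield the same $2^{p-1}(M_p(\bar\mu)+M_p(\mu_0))$ estimate, and your handling of the $d_\sigma$-boundedness conclusion is in fact more explicit than the paper's.
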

\begin{proof}
We suppose that $c_2 >0$, referring to the following
Example~\ref{ex:7.3.1} for  some ideas on   the case $c_2=0$. Due
to~\eqref{e:charact}, for every $\bar{\mu}= \bar{\rho} \mathscr{L}_d
\in \Sigma(\phi)$ there holds in particular
\begin{equation}
\label{e:competitor}
\begin{gathered}
\phi(\tilde{\mu}) - \phi(\bar{\mu}) \geq \frac{\lambda}{p} W_p^p
(\tilde{\mu},\bar{\mu})
\\
\text{for all $\tilde{\mu}=\tilde{\rho}  \mathscr{L}_d \in
\mathscr{P}_p(\R^d)$, with $\tilde{\rho} \in \mathrm{C}^\infty
(\R^d)$ compactly supported.}
\end{gathered}
\end{equation}
Let $\mathrm{t}: \R^d \to \R^d$ be the unique
(by~\cite[Thm.~6.2.4]{Ambrosio05}) optimal transport map between
$\bar{\mu}$ and $\tilde{\mu}$, so that
\[
\begin{aligned}
W_p^p (\tilde{\mu},\bar{\mu}) = \int_{\R^d} |\mathrm{t}(x) - x|^p
\dd \bar{\mu}(x) \leq 2^{p-1} \int_{\R^d} |\mathrm{t}(x)|^p \dd
\bar{\mu}(x) + 2^{p-1} \int_{\R^d} | x|^p \dd \bar{\mu}(x)\,.
\end{aligned}
\]
Hence, it follows from \eqref{e:competitor} (recalling that $\lambda
\leq 0$ and that $\tilde{\mu}$ is compactly supported) that
\begin{equation}
\label{e:boundedness-rest-points}
 \phi(\bar{\mu}) + \frac{\lambda 2^{p-1}}{p}\int_{\R^d} | x|^p \dd
 \bar{\mu}(x) \leq \phi(\tilde{\mu}) -\frac{\lambda 2^{p-1}}{p}  \int_{\R^d} |\mathrm{t}(x)|^p \dd
\bar{\mu}(x) \leq \phi(\tilde{\mu})+ C\,,
\end{equation}
Exploiting now \eqref{v3} and \eqref{f2} and repeating the
calculations  developed throughout~\eqref{e2}--\eqref{e4} in the
proof of Proposition~\ref{prop:wass}, it is not difficult to infer
from \eqref{e:boundedness-rest-points} that
\begin{equation}
\label{e:bdd}
\begin{aligned}
\exists\, \overline{C}>0 \ \ \forall\, \bar{\mu} \in \Sigma(\phi) \,
: \ \ \begin{cases} \displaystyle & \phi(\bar{\mu}) \leq
\overline{C}\,,
\\
\displaystyle & W_p^p  (\delta_0,\bar{\mu})\leq \int_{\R^d} |x|^p
\dd \bar{\mu}(x) \leq \overline{C}\,,
\end{cases}
\end{aligned}
\end{equation}
$\delta_0$ denoting the Dirac mass centered at $0$, which concludes
the proof.
\end{proof}
\noindent On behalf of   Theorem~\ref{thm:ene},
Proposition~\ref{prop:wass}, and Lemma~\ref{lemma:rest-points}, we
thus conclude
\begin{theorem}
\label{th:attrat-wass} In the setting
of~\eqref{def-u}--\eqref{def-sigma}, assume~\eqref{v1}--\eqref{f3},
and  that \eqref{v2} holds with $\lambda \leq 0$.

 Then, the
generalized semiflow generated by the energy solutions of the
gradient flow driven by $\phi$  from~\eqref{e:funz-sum}
  admits a global attractor.
\end{theorem}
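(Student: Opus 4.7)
The strategy is to verify the hypotheses of the abstract Theorem~\ref{thm:ene} for the Wasserstein setting laid out in Section~\ref{ss:7.1}, with $U=\mathscr{P}_p(\R^d)$, $d=W_p$, and $\sigma$ the narrow topology. Conditions \eqref{A1}, \eqref{A2}, and \eqref{A2bis} are already available: completeness of $(\mathscr{P}_p(\R^d),W_p)$ follows from \cite[Prop.~7.1.5]{Ambrosio05}, and compatibility plus existence of a distance $d_\sigma$ weaker than $\sigma$ are recorded via \cite[Lemma~7.1.4]{Ambrosio05} and \cite[Rem.~5.1.1]{Ambrosio05}. The plan thus reduces to citing Proposition~\ref{prop:wass} and Lemma~\ref{lemma:rest-points}.

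First I would observe that Proposition~\ref{prop:wass} packages together all the remaining analytic hypotheses needed by Theorem~\ref{thm:ene} for the specific functional $\phi$ in \eqref{e:funz-sum}: sequential narrow lower semicontinuity gives \eqref{A3}; the tightness argument, driven by the coercivity \eqref{v3} of $V$, the doubling and growth conditions on $F$ (see~\eqref{f1}--\eqref{f2}), and the nonnegativity of $W$, delivers the $\sigma$-sequential compactness of sublevels, i.e.~\eqref{A5}; the $(\lambda,p)$-geodesic convexity of $\phi$ (Remark~\ref{rem:impo}) combined with Proposition~\ref{prop:l-p-geod} yields that $\rsls$ is a strong upper gradient, which is~\eqref{sug}; finally, the regularity of $\phi$ in the sense of~\eqref{def:regular}, together with the identity~\eqref{equal-slopes} and the minimal-selection characterization~\eqref{e:minimal-selection}, gives the conditional continuity~\eqref{ass-cont}.

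Next, I would verify \eqref{A12}. By \eqref{rest-point-bis}, the rest points of the generalized semiflow $\mathcal{S}$ are precisely the pairs $(\bar\mu,\phi(\bar\mu))$ with $\rsls(\bar\mu)=0$. Thanks to \eqref{equal-slopes}, \eqref{e:minimal-selection}, and \eqref{eq:subdiff}, the condition $\rsls(\bar\mu)=0$ is equivalent to $\bar\mu=\bar\rho\mathscr{L}^d$ with $L_F(\bar\rho)\in W^{1,1}_{\loc}(O)$ and $c_1\nabla V+c_2\nabla L_F(\bar\rho)/\bar\rho+c_3\nabla W\ast\bar\rho=0$ $\bar\mu$-a.e., i.e.~$\bar\mu\in\Sigma(\phi)$. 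Lemma~\ref{lemma:rest-points} then bounds both $\phi(\bar\mu)$ and the $p$-th moment of $\bar\mu$ uniformly on $\Sigma(\phi)$. Since $W_p(\delta_0,\bar\mu)^p\le\int_{\R^d}|x|^p\,\ddd\bar\mu(x)$ and $d_\sigma$ is globally weaker than $W_p$, the uniform moment bound controls the $d_\sigma$-component of $d_\mathcal{X}$, while the bound on $\phi(\bar\mu)$ controls the real component; this yields \eqref{A12}.

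With \eqref{A1}--\eqref{A5}, \eqref{A12}, \eqref{ass-cont}, and \eqref{sug} all in place, Theorem~\ref{thm:ene} applies and produces the global attractor for the generalized semiflow $\Ene$ of energy solutions driven by $\phi$. I do not anticipate a serious obstacle beyond the clean identification of rest points with elements of $\Sigma(\phi)$; this is purely a matter of chaining the subdifferential formula~\eqref{eq:subdiff} to the slope vanishing condition, and no genuinely new estimate is required at this final step.
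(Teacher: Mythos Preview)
Your proposal is correct and follows exactly the paper's approach: the paper's own proof is the single sentence that the result follows from Theorem~\ref{thm:ene}, Proposition~\ref{prop:wass}, and Lemma~\ref{lemma:rest-points}. You have simply spelled out in more detail how each hypothesis of Theorem~\ref{thm:ene} is supplied by those two auxiliary results, including the identification of $Z(\mathcal{S})$ with $\Sigma(\phi)$ via \eqref{rest-point-bis}, \eqref{equal-slopes}, and \eqref{e:minimal-selection}--\eqref{eq:subdiff}, which the paper leaves implicit.
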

%The \emph{proof} follows from   P taking into account that, by
%virtue of~\eqref{e:minimal-selection}--\eqref{eq:subdiff}, the set
%of the rest
% points  of the semiflow is  given by~\eqref{rest-wass}.
\begin{example}
\label{ex:7.3.1} \upshape
 In the case $c_2=c_3=0$,
the set of the rest points $\Sigma(\phi)$ consists of the  measures
$\bar{\mu} \in \mathscr{P}_p (\R^d)$ satisfying
\begin{equation}
\label{res-v} \bar{\mu}(\R^d \setminus S)=0, \quad \text{with}
\qquad S = \left\{x \in \R^d\, : \ \ \nabla V(x) =0 \right\}\,.
\end{equation}
The boundedness of $\Sigma (\phi)$ follows from~\eqref{v1},
\eqref{v3}, and   \eqref{v2} (with $\lambda \leq 0)$.
 Indeed, the latter condition and \eqref{res-v} yield that for
every fixed $\bar{y} \in O$ there holds
\[
V(\bar{y}) \geq V(x) + \frac{\lambda}{p} |\bar{y} - x|^p \qquad
\text{for\,}\bar{\mu}-\text{a.a.}\, x \in \R^d\,,
\]
whence, also using~\eqref{v3}, we find  that   for all $M_2>0$ there
exists $C_{M_2}>0$ with
\[
\begin{aligned}
V(\bar{y}) - \frac{\lambda 2^{p-1}}{p} |\bar{y}|^p \geq V(x) +
\frac{\lambda 2^{p-1}}{p} |x|^p \geq \frac{V(x)}2 + \frac{1}{2}
\left(M_2 + \frac{\lambda 2^p}{p} \right)|x|^p -\frac{C_{M_2}}{2}
\end{aligned}
\]
for $\bar{\mu}$-a.a. $x \in \R^d$. Choosing $M_2 > -\lambda 2^p/p  $
and taking into account that $V$ is bounded from below, one
immediately deduces~\eqref{e:bdd}.
 An analogous
argument can be developed in the case $V$ is $\lambda$-convex in the
standard sense.
\end{example}
%%%%%%%%%%%%%%%%%%%
\begin{example}
\label{ex:7.2} \upshape
 In the case $c_2 \neq 0$ and  $c_3=0$,  $V$
fulfills~\eqref{v1}, \eqref{v3},
 and~\eqref{v2} (with $\lambda \leq 0)$, and $F$ is either the entropy or the power
functional (cf. with~\eqref{e:7.12}), then $\Sigma(\phi)$ reduces to
a singleton. In fact,  when $F(s)=s \log(s)$ the stationary equation
defining the  set of the rest points of $\phi$  becomes
\[
c_1\nabla V + c_2\frac{\nabla \bar{\rho}}{\bar{\rho}} =0 \qquad
\text{$\bar{\mu}=\bar{\rho} \mathscr{L}^d $-a.e. in $\R^d$,}
\]
leading to
\[
\bar{\mu}= \frac{1}Z e^{-V} \mathscr{L}^d\,,
\]
(where $Z= \int_{\R^d} e^{-V(x)} \dd x$).
 Similar calculations may be developed in the case of the
power functional.
\end{example}
%\begin{example}
%\label{ex:7.3}
%
%\end{example}

%%%%%%%%%%%%%%%%%%
%%%%%%%%%%%%%%%%%%
%%%%%%%%%%%%%%%%%%
%%%%%%%%%%%%%%%%%%
\section{Application to phase transition evolutions driven by mean curvature}
\label{s:8} Throughout this section, we shall assume  that
\begin{equation}
\label{e:ambient8} \text{$\Omega$ is a $\rmC^1$, connected,  and
open set,}
\end{equation}
 and  take as ambient space
\begin{equation}
\label{e:ambient7} \U=H^{-1} (\Omega), \ \ \text{$\sigma$ being the
norm topology}
\end{equation}
\subsection{The Stefan-Gibbs-Thomson problem}
\label{ss:8.1} It is straightforward to check that, in the setting
of~\eqref{e:ambient7},    the functional $\phiste:H^{-1}(\Omega) \to
[0,+\infty]$
\[
\phiste(u):=
\begin{cases}
\displaystyle \inf_{\nchi \in \mathrm{BV}(\Omega)} \left\{
\int_{\Omega} \Big(\frac{1}{2}|u -\nchi |^{2} + I_{\{-1,1\}}(\nchi
) \Big) \, dx + \int_{\Omega} |D\nchi| \right\}  & \text{if $u \in
L^2 (\Omega)$,}
\\
\displaystyle +\infty & \text{otherwise}
\end{cases}
\]
  complies
with~\eqref{A3}--\eqref{A5}.
%where $\mathrm{P}: \mathrm{BV}(\Omega) \to [0,+\infty]$ is the
%perimeter functional
%\[
%\mathrm{P}(\nchi) =\int_{\Omega} |D\nchi| +
%\int_{\Omega}I_{\{-1,1\}}(\nchi(x)) \dd x
%\]
Given $u \in D(\phiste)=L^2 (\Omega)$, we   denote
 by $\mathcal{M}(u)$ the non-empty set of the $\nchi$'s in $\mathrm{BV}(\Omega; \{
 -1,1\})$ attaining the minimum in the  definition of $\phiste$, and by
$\partial \phiste $ its Fr\'echet subdifferential with respect to
the topology of $H^{-1}(\Omega)$   (which we identify with its
dual).   Therefore, the limiting subdifferential
  $\lmsbd \phiste $ is given at every  $u \in L^2 (\Omega)$ by
\[
\lmsbd \phiste(u)= \left\{ \xi \in H^{-1} (\Omega)\, : \
\begin{array}{l} \displaystyle
\exists\, u_n \in H^{-1}(\Omega), \   \xi_n\in  H^{-1}(\Omega)  \
\text{with }
\\
 \displaystyle
 \xi_n\in
\partial \phiste(u_n) \ \text{for all $n\in \N$,}\\
 \displaystyle
  u_n\to u \ \text{in $H^{-1}(\Omega)$,} \
  \xi_n \weakto \xi
\ \text{in $H^{-1}(\Omega)$,}
  \\
   \displaystyle
   \sup_n\phiste(u_n)
  <+\infty
  \end{array}
  \right\}\,.
\]
 It was proved   (cf. with~\cite[Thm.~2.5]{Rossi-Savare04}) that,
for every  initial datum $u_0 \in L^2 (\Omega)$,
  every $u
\in \mathrm{GMM} (\phiste, u_0)$  (which is a non-empty set thanks
to Theorem~\ref{thm:conve}) fulfills the Cauchy problem for the
gradient flow equation
\[ u'(t) + \lmsbd \phiste (u(t)) \ni 0 \qquad
\text{in $H^{-1}(\Omega)$} \quad \forae\, t \in (0,T)\,.
\]

The following result sheds some light on the properties of $\lmsbd \phiste$. %%%%%%%%%%%%%%%%%%%%%
\begin{proposition}
\label{p:crucial-ste}
 The functional $\phiste$ fulfills
the continuity property~\eqref{cond-cont} and
\begin{equation}
\label{e:crucial-luck}
\begin{gathered}
 \text{for all $u \in L^2 (\Omega)$} \ \text{there
exists a unique  $\bar{\nchi}_u \in \mathrm{BV} (\Omega)$ such that}
 \\   \bar{\nchi}_u \in \mathcal{M}(u) \ \text{and} \
 u-\bar{\nchi}_u \in H_0^1 (\Omega) \,.
 \end{gathered}
\end{equation}
 Further, let $u
\in L^2 (\Omega)$ fulfill $ |\partial^+ \phiste|(u)<+\infty $. Then,
$\lmsbd \phiste(u)$ is non-empty  and, more precisely,
\begin{equation}
\label{rapr-subdif} \lmsbd \phiste(u) = \{ A (u -\bar{\nchi}_u) \},
\end{equation}
($A$ being  the Laplace operator with homogeneous Dirichlet boundary
conditions) with $\teta:= u -\bar{\nchi}_u$ fulfilling the weak form
of the Gibbs-Thomson law, i.e. for all $\zzeta\in
C^2(\overline\Omega;\R^d)$ with $\zzeta\cdot
       \mathbf{n}=0$ on $\partial\Omega$ there holds
   \begin{equation}
     \label{eq:24bis}
     %      \begin{aligned}
     %        &
     \int_\Omega\big(\dive \zzeta-\mathbf{\nu}^T \,D\zzeta\,
       \mathbf{\nu}\big) \,d|D\bar{\nchi}_u| =\int_\Omega \dive
       (\vartheta\zzeta)\bar{\nchi}_u \dd x,
%     \end{aligned}
   \end{equation}
   where the Radon-Nikod\'ym derivative
   $\displaystyle\mathbf{\nu}=\frac{d(D\bar{\nchi}_u)}{d|D\bar{\nchi}_u|}$ is the measure-theoretic
   inner normal to the boundary of the phase $\{ x \in \Omega\, : \bar{\nchi}_u(x) =1 \} $.
 Finally, there holds
\begin{equation}
\label{final-ineq} |\partial^+ \phiste|(u) \geq \|A (u
-\bar{\nchi}_u) \|_{H^{-1}(\Omega)} = \| \nabla (u -\bar{\nchi}_u)
\|_{L^2 (\Omega)}\,.
\end{equation}
\end{proposition}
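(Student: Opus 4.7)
\emph{Plan.} The plan is to address the four assertions in sequence: existence and uniqueness of $\bar\nchi_u$, the identification $\lmsbd\phiste(u) = \{A(u-\bar\nchi_u)\}$ together with the weak Gibbs--Thomson law, and finally the continuity property and slope bound as consequences.

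For uniqueness in \eqref{e:crucial-luck}, if $\nchi_1, \nchi_2 \in \mathcal{M}(u)$ both satisfy $u - \nchi_i \in H_0^1(\Omega)$, then the difference $w := \nchi_1 - \nchi_2 \in H_0^1(\Omega)$ takes values in the finite set $\{-2, 0, 2\}$, so its weak gradient vanishes a.e.\ and $w$ is locally constant; the connectedness of $\Omega$ from \eqref{e:ambient8} together with the zero trace force $w \equiv 0$. Existence of a minimizer in $\mathrm{BV}(\Omega;\{-1,1\})$ is the classical direct method, via $L^1$-compactness of BV sublevels and lower semicontinuity of the total variation. The $H_0^1$-regularity of $u - \bar\nchi_u$ is the delicate point: I would derive it by perturbing $u$ to $u + t\varphi$ with $\varphi \in C_c^\infty(\Omega)$, using the suboptimality of $\bar\nchi_u$ as a competitor in $\phiste(u+t\varphi)$ together with the optimality of $\bar\nchi_{u+t\varphi}$; the resulting two-sided Moreau-type estimate, in the limit $t \to 0$, identifies the distributional Laplacian of $u - \bar\nchi_u$ as an element of $H^{-1}(\Omega)$, with vanishing Dirichlet trace encoded in the choice of interior test functions.

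For \eqref{rapr-subdif} and the weak Gibbs--Thomson law \eqref{eq:24bis}, I would proceed in two steps. First, to establish $A(u - \bar\nchi_u) \in \lmsbd\phiste(u)$, approximate $u$ by resolvent minimizers $u_\tau$ of \eqref{incre-sgt}: the Euler equation yields $\xi_\tau := \tau^{-1}A^{-1}(u - u_\tau) \in \partial\phiste(u_\tau)$, and the finite-slope hypothesis combined with the definition \eqref{def:lmbd} of $\lmsbd$ provides weak $H^{-1}$-convergence of $\xi_\tau$ along a subsequence to some limit, which is identified with $A(u - \bar\nchi_u)$ via the characterization of resolvent minimizers and the uniqueness statement of \eqref{e:crucial-luck}. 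Second, to produce \eqref{eq:24bis}, I would use inner variations: for $\zzeta \in C^2(\overline\Omega;\R^d)$ tangent to $\partial\Omega$, the flow $\Phi_s$ of $\zzeta$ maps $\Omega$ into itself and keeps $\bar\nchi_u \circ \Phi_{-s} \in \mathrm{BV}(\Omega;\{-1,1\})$ admissible; differentiating at $s = 0$ the energy inequality obtained from $\bar\nchi_u \circ \Phi_{-s}$ as a competitor, the linearization of the $L^2$-term produces the factor $\vartheta = u - \bar\nchi_u$ while the first variation of the BV perimeter produces the normal--tangential quadratic form, yielding \eqref{eq:24bis}.

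The continuity \eqref{cond-cont} then follows by compactness: from the energy bound I extract $\bar\nchi_{u_n}$ bounded in BV, hence relatively compact in $L^1(\Omega)$, and $u_n$ bounded in $L^2(\Omega)$, while the slope bound together with \eqref{final-ineq} gives $u_n - \bar\nchi_{u_n}$ bounded in $H_0^1(\Omega)$, which is relatively compact in $L^2$ by Rellich's theorem; passing to the strong $L^2$-limit in every term of $\phiste$ yields $\phiste(u_n) \to \phiste(u)$. Finally, \eqref{final-ineq} follows from \eqref{eq:key-ineq} specialized to the Hilbert setting, combined with the identification $\lmsbd\phiste(u) = \{A(u - \bar\nchi_u)\}$ from \eqref{rapr-subdif}; the equality $\|A(u-\bar\nchi_u)\|_{H^{-1}(\Omega)} = \|\nabla(u-\bar\nchi_u)\|_{L^2(\Omega)}$ for $u - \bar\nchi_u \in H_0^1(\Omega)$ is immediate from integration by parts. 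The main obstacle will be claim \eqref{e:crucial-luck}: the non-convex constraint $\nchi \in \{-1,1\}$ rules out classical convex-duality arguments, so the $H_0^1$-regularity of $u - \bar\nchi_u$ for generic $u \in L^2(\Omega)$ must be extracted via the careful boundary/perturbation analysis sketched above.
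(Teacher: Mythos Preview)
Your plan has a genuine circularity. You derive the conditional continuity \eqref{cond-cont} from \eqref{final-ineq}, and you derive \eqref{final-ineq} from \eqref{eq:key-ineq} together with the \emph{full} identification \eqref{rapr-subdif}. But for \eqref{rapr-subdif} you only sketch the inclusion $A(u-\bar\nchi_u)\in\lmsbd\phiste(u)$ via resolvents; you never establish that $\lmsbd\phiste(u)$ is a singleton. The general bound \eqref{eq:key-ineq} only controls the \emph{minimal} norm in $\lmsbd\phiste(u)$, so without the singleton property it does not yield \eqref{final-ineq}. Proving the reverse inclusion of \eqref{rapr-subdif} amounts to showing that for any approximating sequence $u_n\to u$ with $\xi_n=A(u_n-\bar\nchi_{u_n})\rightharpoonup\xi$, the limit $\nchi$ of $\bar\nchi_{u_n}$ again lies in $\mathcal M(u)$; this is precisely the kind of energy-convergence statement that \eqref{cond-cont} encodes, so your chain closes on itself.

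The paper breaks this loop by computing the \emph{local} slope directly, without passing through the subdifferential. Fixing $w\in L^2(\Omega)$ and using the frozen competitor $\bar\nchi_u$ in $\phiste(u+hw)$ gives
\[
\limsup_{h\downarrow 0}\frac{\big(\phiste(u)-\phiste(u+hw)\big)^+}{h\|w\|_{H^{-1}}}
\ \ge\ \frac{\int_\Omega w\,(u-\bar\nchi_u)}{\|w\|_{H^{-1}}},
\]
and optimizing over $w$ yields $|\partial\phiste|(u)\ge\|u-\bar\nchi_u\|_{H_0^1(\Omega)}$; together with the easy upper bound one gets the identity $|\partial\phiste|(u)=\|u-\bar\nchi_u\|_{H_0^1(\Omega)}$. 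Since the hypothesis in \eqref{cond-cont} bounds the \emph{local} slope, this identity immediately gives the $H_0^1$ bound on $u_n-\bar\nchi_{u_n}$ that your compactness argument needs, and conditional continuity follows first. Only afterwards does one deduce the full \eqref{rapr-subdif} and then \eqref{final-ineq} via \eqref{eq:key-ineq}. Your perturbation idea for \eqref{e:crucial-luck} is in the same spirit as this computation, but you should deploy it to bound the slope rather than to jump to the subdifferential.
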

\begin{remark}
\upshape In view of the conditional continuity~\eqref{cond-cont} and
of Proposition~\ref{comp-sol}, for any generalized solution
$(u,\varphi)$ of the metric gradient flow of $\phiste$ there holds
$\varphi= \phi \circ u$. Hence we shall  simply
 refer to any generalized solution $(u,\varphi)$  as $u$.

\end{remark}
 \begin{proof} Property~\eqref{e:crucial-luck} (which is
somehow underlying some of the arguments in~\cite{Luckhaus90}) is
explicitly proved in~\cite[Sec.~5.2]{Rossi-Savare04}.
 It is  shown
in~\cite[Prop.~5.3]{Rossi-Savare04} that,  if $\frsbd \phiste (u)
\neq \emptyset$, then   $\frsbd \phiste (u)= \{ A(u-\bar{\nchi}_u)\}
$. It is straightforward to prove that there also holds

\begin{equation}
\label{cuore}
 |\partial
\phiste|(u) \leq \| A(u-\bar{\nchi}_u) \|_{H^{-1} (\Omega)}.
\end{equation}

 On the
other hand, we fix $w \in L^2 (\Omega)$ and notice that
\[
\begin{aligned}
\Lambda (u,w) : & = \limsup_{h \downarrow 0} \frac{\left(\phiste(u)
- \phiste(u+hw) \right)^+}{h \| w\|_{H^{-1}(\Omega)}} \\
&\geq \limsup_{h \downarrow 0} \frac{\displaystyle\left(\frac12\| u
-\bar{\nchi}_u \|_{L^2 (\Omega)}^2 +
\int_{\Omega}|\nabla\bar{\nchi}_u| - \frac12 \| u+hw
-\bar{\nchi}_u\|_{L^2 (\Omega)}^2 -
\int_{\Omega}|\nabla\bar{\nchi}_u| \right)^+}{h \|
w\|_{H^{-1}(\Omega)}}
\\ &  = \limsup_{h \downarrow 0} \frac{\displaystyle\left(-\frac{h^2}2 \|w
\|_{L^2 (\Omega)}^2 + h \int_{\Omega} w (u-\bar{\nchi}_u)
\right)^+}{h \| w\|_{H^{-1}(\Omega)}} =
\frac{\displaystyle\int_{\Omega} w (u-\bar{\nchi}_u)}{\|
w\|_{H^{-1}(\Omega)}}\,.
\end{aligned}
\]
Being $|\partial \phiste|(u)  \geq \sup_{w \in L^2 (\Omega)}\Lambda
(u,w) $,  by a density argument we easily conclude that $|\partial
\phiste|(u) \geq \|u-\bar{\nchi}_u \|_{H_0^{1} (\Omega)}$, so that,
 also in view of~\eqref{cuore},  with a slight abuse of notation  we may write
\begin{equation}
\label{e:intermediate-fr} |\partial \phiste|(u)= \| \frsbd
\phiste(u) \|_{H^{-1}(\Omega)} = \| u -\bar{\nchi}_u
\|_{H_0^{1}(\Omega)}\,.
\end{equation}
Using~\eqref{e:intermediate-fr} and arguing as
in~\cite[Prop.~5.3]{Rossi-Savare04} it is not difficult to check the
conditional continuity~\eqref{cond-cont}, and
that~\eqref{rapr-subdif} holds. The latter yields~\eqref{final-ineq}
through the general inequality~\eqref{eq:key-ineq}. Finally, the
weak Gibbs-Thomson law \eqref{eq:24bis} has been proved
in~\cite[Lemma~5.10]{Rossi-Savare04}.
\end{proof}
\noindent
\begin{corollary}
\label{prop:nuova} Under the above assumptions,  for every $u \in
\mathrm{GMM} (\phiste, u_0)$ (which is in particular a generalized
solution of the metric gradient flow of $\phiste$), and every
$\bar{\chi}_u \in \mathcal{M}(u)$ the pair $(\teta= u -\bar{\chi}_u,
\bar{\chi}_u)$ is a solution of the weak
formulation~\textrm{(\ref{eq:enbal}, \ref{eq:24})} of the
Stefan-Gibbs-Thomson Problem, fulfilling the Lyapunov
inequality~\eqref{e:lyap-ineq}.
\end{corollary}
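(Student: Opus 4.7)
The plan is to synthesize the abstract machinery built earlier in the paper with the concrete properties of $\phiste$ obtained in Proposition \ref{p:crucial-ste}. First, since $\phiste$ complies with \eqref{A3}--\eqref{A5}, Theorem \ref{thm:conve} tells me that every $u\in \mathrm{GMM}(\phiste,u_0)$ is the first component of a generalized solution $(u,\varphi)$, with $u\in \AC^2_{\loc}([0,+\infty);H^{-1}(\Omega))$. Proposition \ref{p:crucial-ste} provides the conditional continuity \eqref{cond-cont}, so Proposition \ref{comp-sol} promotes $\varphi=\phiste\circ u$ a.e.\ and yields the energy inequality \eqref{en-ineq-with-cont}
\[
 \frac12\int_s^t|u'|^2(r)\dd r+\frac12\int_s^t \rsls^2(u(r))\dd r+\phiste(u(t))\le \phiste(u(s)),
\]
valid for all $t\ge 0$ and a.a.\ $s\in(0,t)$. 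In particular, $\rsls(u(r))<+\infty$, hence $|\partial^+\phiste|(u(r))<+\infty$, for a.a.\ $r$.

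The weak Gibbs-Thomson law \eqref{eq:24} would then be obtained pointwise in time: by \eqref{e:crucial-luck} and \eqref{eq:24bis} of Proposition \ref{p:crucial-ste}, at almost every $r$ there is a unique $\bar\nchi_{u(r)}\in\mathcal{M}(u(r))$ with $u(r)-\bar\nchi_{u(r)}\in H^1_0(\Omega)$, satisfying \eqref{eq:24bis}; choosing $\bar\chi_u(\cdot,r):=\bar\nchi_{u(r)}$ gives \eqref{eq:24}. The regularity $\teta=u-\bar\chi_u\in L^2_{\loc}(0,+\infty;H^1_0(\Omega))$ follows from \eqref{final-ineq}, which bounds $\|\nabla\teta(r)\|^2_{L^2}\le \rsls^2(u(r))$ and thereby transfers the $L^2_{\loc}$ control on the strong relaxed slope to $\nabla\teta$; the $L^\infty_{\loc}$ bound on $\bar\chi_u$ in $\mathrm{BV}(\Omega)$ comes from the monotone decay $\phiste(u(r))\le \phiste(u_0)$.

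The main step, and the principal obstacle, is extracting the energy balance $u_t+A\teta=0$ in $H^{-1}(\Omega)$. I would tackle this by passing to the limit in the Euler equations for the Minimizing Movements scheme \eqref{incre-sgt}, which read $(U^n_{\ttau_k}-U^{n-1}_{\ttau_k})/\tau^n+A(U^n_{\ttau_k}-\bar\nchi_{U^n_{\ttau_k}})=0$ in $H^{-1}(\Omega)$: the uniform energy bounds yield $L^2$ estimates on the discrete time derivatives and on $\nabla(U^n_{\ttau_k}-\bar\nchi_{U^n_{\ttau_k}})$, and the strong-weak closure built into the definition of $\lmsbd\phiste$, combined with the conditional continuity \eqref{cond-cont} and the unique representation \eqref{rapr-subdif}, allows identification of the limit as $A(u-\bar\chi_u)\in \lmsbd\phiste(u)$ a.e. (This is the route taken in \cite{Rossi-Savare04}; alternatively, one could try to verify that $\rsls$ is a strong upper gradient for $\phiste$ and deduce the PDE from an abstract minimal-section argument as in Proposition \ref{prop:link-between}, but this requires a nontrivial chain rule for the nonconvex $\phiste$.) Once the PDE is in hand, the Lyapunov inequality \eqref{e:lyap-ineq} falls out of the energy inequality: using $|u'|^2=\|A\teta\|^2_{H^{-1}}=\|\nabla\teta\|^2_{L^2}$ from the PDE and $\rsls^2(u)\ge \|\nabla\teta\|^2$ from \eqref{final-ineq}, the two dissipation integrals combine into $\int_s^t\|\nabla\teta\|^2_{L^2}\dd r$, and unfolding $\phiste(u(\cdot))$ via the minimizing pair $\bar\chi_{u(\cdot)}$ produces exactly \eqref{e:lyap-ineq}.
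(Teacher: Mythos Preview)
Your proposal is correct and follows essentially the same route as the paper. The paper does not give an explicit proof for this corollary; it is meant to be read as a direct synthesis of the cited result from \cite{Rossi-Savare04} (that every $u\in\mathrm{GMM}(\phiste,u_0)$ solves $u'+\lmsbd\phiste(u)\ni 0$), the subdifferential representation \eqref{rapr-subdif} and the Gibbs--Thomson relation \eqref{eq:24bis} from Proposition~\ref{p:crucial-ste}, and the slope inequality \eqref{final-ineq}. Your derivation of the Lyapunov inequality --- combining $|u'|^2=\|\nabla\vartheta\|_{L^2}^2$ from the PDE with $\rsls^2(u)\ge\|\nabla\vartheta\|_{L^2}^2$ from \eqref{final-ineq} --- is precisely what the paper alludes to in Remark~\ref{rem:lyapx}, stated there a bit more tersely.
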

\begin{remark}[Lyapunov inequality]
\label{rem:lyapx}
 \upshape  Indeed,
  through
inequality~\eqref{final-ineq},  the energy inequality~\eqref{1} for
generalized solutions translates into the Lyapunov
inequality~\eqref{e:lyap-ineq}. We may observe that, since in this
case $|\partial^+ \phiste | $ is not an upper gradient (from another
viewpoint, the chain rule~\eqref{e:chain-rule} w.r.t. $\lmsbd
\phiste$ does not hold), inequality~\eqref{e:lyap-ineq} yields the
most exhaustive
 \emph{energetic information}   on solutions  of   the
Stefan-Gibbs-Thomson problem.
\end{remark}
\paragraph{\bf Existence of the global attractor.}
We  deduce  from Theorem~\ref{teor:1} for generalized solutions the
following result, which in particular yields information  on the
long-time behavior of  the \emph{Minimizing Movements  solutions}
 of the Stefan problem with
the (weak) Gibbs-Thomson law. %(i.e., the solutions arising from the
%time-incremental minimization scheme~\eqref{incre-sgt}).
\begin{theorem}
\label{e:coro-rest-points} Under the above assumptions,  the
semiflow of the generalized solutions of the metric gradient flow
driven by the functional $\phiste$ from~\eqref{e:funz-sgt} admits
 a global attractor in the phase
space $D(\phiste)=L^2 (\Omega)$, endowed with the distance $
d_{\phiste}$ from~\eqref{e:phase-space}.
\end{theorem}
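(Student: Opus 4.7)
The plan is to apply Theorem~\ref{teor:1} in the ambient space $\U = H^{-1}(\Omega)$ with $\sigma$ equal to the norm topology and $d_\sigma = \|\cdot\|_{H^{-1}}$, and then transfer the resulting global attractor in $(\cx,\dcx)$ to the reduced phase space $(D(\phiste), d_{\phiste})$. Assumptions \eqref{A1}--\eqref{A2bis} are immediate. For \eqref{A3} and \eqref{A5}, I would note that if $\phiste(u) \leq C$ then $\|u - \chi\|_{L^2} \leq \sqrt{2C}$ for some $\chi \in \mathrm{BV}(\Omega;\{-1,1\})$, so $\|u\|_{L^2} \leq \sqrt{2C} + |\Omega|^{1/2}$; the compact embedding $L^2(\Omega) \hookrightarrow H^{-1}(\Omega)$ then gives $H^{-1}$-compactness of sublevels, and lower semicontinuity follows by standard $L^2$-weak arguments. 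Proposition~\ref{p:crucial-ste} moreover furnishes the conditional continuity \eqref{cond-cont}.

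The main step is the boundedness \eqref{A12} of the rest points. By Lemma~\ref{comparison}, any $(\bar u, \bar\varphi) \in Z(\mathcal{S})$ satisfies $\bar\varphi = \phiste(\bar u)$ together with $|\partial^+\phiste|(\bar u) = 0$. Combined with the lower bound \eqref{final-ineq}, this forces $\bar u - \bar\chi_{\bar u} = 0$ in $H^1_0(\Omega)$, whence $\bar u = \bar\chi_{\bar u}$ almost everywhere, so $\bar u \in \mathrm{BV}(\Omega;\{-1,1\})$ and $\bar u \in \mathcal{M}(\bar u)$, i.e.\ $\bar u$ is itself a minimizer in the infimum defining $\phiste(\bar u)$. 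Testing that infimum against the admissible constant competitor $\chi \equiv 1$, for which $\int_\Omega |D\chi| = 0$, yields
\[
\phiste(\bar u) = \int_\Omega |D\bar u| \leq \tfrac{1}{2}\|\bar u - 1\|_{L^2}^2 \leq 2|\Omega|.
\]
Since $\bar u \in \{-1,1\}$ a.e.\ also gives $\|\bar u\|_{H^{-1}} \leq C_\Omega |\Omega|^{1/2}$, I conclude that $Z(\mathcal{S})$ is bounded in $(\cx, \dcx)$. I expect this constant-competitor comparison to be the main obstacle: without the identity $\bar u \in \mathcal{M}(\bar u)$ (which really rests on conditional continuity plus \eqref{final-ineq}) one could fear zero-mean-curvature configurations with arbitrarily large perimeter, and the verification of \eqref{A12} would break down.

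With all abstract hypotheses verified, Theorem~\ref{teor:1} delivers a global attractor $A \subset (\cx, \dcx)$ for the semiflow of generalized solutions. By Proposition~\ref{comp-sol} every generalized solution satisfies $\varphi = \phi \circ u$, so $A \subset \mathcal{U}(D(\phiste))$, and by \eqref{dist-migliore} the lifting $\mathcal{U}$ is an isometry between $(D(\phiste), d_{\phiste})$ and its image. I would then mimic the transfer argument used in the proof of part (2) of Theorem~\ref{thm:ene}---which, after inspection, uses only Proposition~\ref{comp-sol} together with \eqref{dist-migliore} and does not invoke the strong-upper-gradient property \eqref{sug}---to conclude that $\pi_1(A)$ is compact, positively invariant, and attracts every $d_{\phiste}$-bounded subset of $D(\phiste)$, giving the desired global attractor.
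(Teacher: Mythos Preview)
Your proposal is essentially correct and matches the paper's proof. The core step---showing that every rest point satisfies $\bar u=\bar\chi_{\bar u}\in\mathrm{BV}(\Omega;\{-1,1\})$ via \eqref{final-ineq}, and then bounding $\phiste(\bar u)=\int_\Omega|D\bar u|$ by testing the infimum against the constant competitor $\chi\equiv 1$---is exactly what the paper does.

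A couple of minor comments. First, the global attractor must be \emph{invariant}, not just positively invariant; the projection argument in the proof of Theorem~\ref{thm:ene}(2) does establish full invariance, so you should say so. Second, your claim that this projection argument avoids \eqref{sug} is a bit optimistic: the step ``$\mathcal{T}(t)\cx\subset\mathcal{U}(D(\phi))$ for all $t>0$'' uses $\varphi(t)=\phi(u(t))$ for \emph{every} $t>0$, whereas Proposition~\ref{comp-sol} alone only gives this almost everywhere; the upgrade to all $t$ typically goes through the continuity of $\phi\circ u$, which is where \eqref{sug} enters (via Proposition~\ref{comp-sol2}). That said, the paper's own proof simply invokes Theorem~\ref{teor:1} and stops, relying on the Remark after Proposition~\ref{p:crucial-ste} to identify $(u,\varphi)$ with $u$; so your explicit transfer is extra care rather than a required ingredient, and the slight looseness about \eqref{sug} is not a genuine gap in what you need here.
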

\noindent
\begin{proof} Since $\phiste$ complies with~\eqref{A3}--\eqref{A5},
in order to apply Theorem~\ref{teor:1} it remains to check that the
set of the rest points is bounded in the space $(L^2 (\Omega),
d_{\phiste})$. Indeed,
  it  follows
from the conditional continuity~\eqref{cond-cont} (cf.
with~\eqref{rest-point-bis}) and from  inequality~\eqref{final-ineq}
that for every  rest point $\bar{u}$ of the semiflow generated by
$\phiste$ there holds
\[
\bar{u} = \bar{\nchi}_{\bar{u}} \in \mathcal{M} (\bar{u}).
\]
Thus, $|\bar{u}|=1$ a.e.  in $\Omega$, and
\[
\phiste(\bar{u}) = \int_{\Omega}|\nabla \bar{\nchi}_{\bar{u}} | \leq
\frac12 \int_{\Omega} |\bar{u} -1|^2  \leq 2|\Omega|\,,
\]
where the second inequality ensues from choosing  $\nchi \equiv 1$
in the minimization which defines $\phiste$. This concludes the
proof.
\end{proof}
%%%%%%%%%%%%%%%%%%%
%%%%%%%%%%%%%%%%%%%%%%%
\subsection{Some partial results for the Mullins-Sekerka problem}
\label{ss:8.2} The functional $\phims: H^{-1}(\Omega) \to
[0,+\infty]$
\[
 \phims(\nchi):= \begin{cases} \displaystyle
\int_{\Omega} I_{\{-1,1\}}(\nchi)  \dd x
 + \int_{\Omega}
|D\nchi| & \text{if $\nchi \in \mathrm{BV}(\Omega;\{-1,1\})$,}
\\
\displaystyle +\infty & \text{if $\nchi \in  H^{-1}(\Omega)
\setminus \mathrm{BV}(\Omega;\{-1,1\})$}
\end{cases}
\]
complies with \eqref{A3}--\eqref{A5} (cf. with \cite[Lemma
3.1]{Luckhaus95}). On account of Theorem~\ref{teor:1}, the next step
 towards    the existence of the global attractor for the
generalized solutions $(\nchi,\varphi)$ driven by $\phims$ would be
the proof of the boundedness of the rest points set in the phase
space~\eqref{e:phase-space}. In this direction, we present   the
following result on properties of the local and weak relaxed slopes
of $\phims$, which we believe to have  an independent interest.

% from the viewpoint of the study of the Mullins-Sekerka
%problem (\ref{eq:24}, \ref{enbal-mu-sek}) with techniques from
%analysis in metric spaces.

% Exploiting the latter properties, it was shown
%in \cite{Luckhaus95} that for all $\nchi_0 \in D(\phims)$, every
%^$\nchi \in \mathrm{GMM}(\phims, \nchi_0)$ is a solution to the
%Cauchy problem for the Mullins-Sekerka system ( The following result and shall play an
%important role in the proof that the Minimizing Movement solutions
%of system (\ref{eq:24}, \ref{enbal-mu-sek}) admit a global
%attractor, see Corollary \ref{cor:ms} below.
\begin{proposition}
\label{prop:ms} In the present setting,
 for every
\begin{equation}
\label{e:test-zeta}
 \zzeta\in C^2(\overline\Omega;\R^d) \ \  \text{with} \ \
\text{$\zzeta\cdot  \mathbf{n}=0$ on $\partial\Omega$}
\end{equation}
 there exists a
constant $C_\zeta \geq 0$, depending on $\| \zzeta \|_{W^{1,\infty}
(\Omega;\R^d)} $, such that
 the following
inequalities hold for all  $\nchi \in D(\phims)$ and $\varphi \in
\R$ with $\varphi \geq \phims(\nchi)$:
\begin{align}
\label{ineq-slope1} & |\partial \phims|(\nchi) \geq C_\zeta
\int_\Omega\big(\dive \zzeta-\mathbf{\nu}^T \,D\zzeta\,
       \mathbf{\nu}\big) \dd|D\nchi|\,;
\\
& \label{ineq-slope2} \!\!|\partial^- \phims|(\nchi,\varphi) \geq
C_\zeta \int_\Omega\big(\dive \zzeta-\mathbf{\nu}^T \,D\zzeta\,
       \mathbf{\nu}\big) \dd|D\nchi|\,.
\end{align}
\end{proposition}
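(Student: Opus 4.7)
The strategy for both (8.13) and (8.14) is to perturb $\nchi$ (or the elements of an approximating sequence) by the flow of $\zzeta$ and to balance the resulting perimeter change, computed via the classical first variation formula, against the $H^{-1}$-proximity of the perturbation to its starting point. Let $T_t \colon \overline\Omega \to \overline\Omega$ denote the global flow generated by $\zzeta$; this preserves $\Omega$ thanks to the tangency condition $\zzeta\cdot \mathbf{n}=0$ on $\partial\Omega$. For $\nchi \in \mathrm{BV}(\Omega;\{-1,1\})$ and small $t$, set $\nchi_t := \nchi \circ T_{-t}$, so that $\{\nchi_t=1\}=T_t(\{\nchi=1\})$ and $\nchi_t\in \mathrm{BV}(\Omega;\{-1,1\})$.

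The tangential divergence theorem (i.e.\ the first variation of area for sets of finite perimeter) yields the expansion
\begin{equation*}
\phims(\nchi_t) - \phims(\nchi) = t \int_\Omega \bigl(\dive\zzeta - \mathbf{\nu}^T D\zzeta\,\mathbf{\nu}\bigr) \dd|D\nchi| + o(t)\qquad \text{as } t\to 0,
\end{equation*}
where $\mathbf{\nu}=dD\nchi/d|D\nchi|$. Simultaneously, a change of variables in
\begin{equation*}
\int_\Omega (\nchi_t-\nchi)\varphi \dd x = \int_\Omega \nchi\bigl[(\varphi\circ T_t)\,|\det DT_t| - \varphi\bigr] \dd x \qquad (\varphi \in H^1_0(\Omega)),
\end{equation*}
combined with the first-order estimates $\|\varphi\circ T_t - \varphi\|_{L^2}\le t\|\zzeta\|_{\infty}\|\nabla\varphi\|_{L^2}$ and $\||\det DT_t|-1\|_\infty \le c\,t\,\|\zzeta\|_{W^{1,\infty}}$, will yield the uniform $H^{-1}$-bound $\|\nchi_t-\nchi\|_{H^{-1}(\Omega)} \le c\,t\,\|\zzeta\|_{W^{1,\infty}(\Omega;\R^d)}$ with $c=c(\Omega,d)$ independent of $\nchi$.

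Denoting the curvature integral by $I_\nchi(\zzeta)$, I observe that since $C_\zzeta\ge 0$ and the local slope is nonnegative (8.13) is trivial whenever $I_\nchi(\zzeta)\le 0$. When $I_\nchi(\zzeta)>0$, replacing $t$ by $-t$ in the above expansion makes $v_t:=\nchi_{-t}$ a strictly decreasing competitor for $\nchi$; inserting it into the definition of the local slope and letting $t\downarrow 0$ gives
\begin{equation*}
|\partial \phims|(\nchi)\ge \limsup_{t\downarrow 0} \frac{(\phims(\nchi)-\phims(v_t))^+}{\|v_t-\nchi\|_{H^{-1}}} \ge \frac{I_\nchi(\zzeta)}{c(\Omega,d)\,\|\zzeta\|_{W^{1,\infty}}},
\end{equation*}
which is precisely (8.13) with $C_\zzeta := 1/(c(\Omega,d)\|\zzeta\|_{W^{1,\infty}(\Omega;\R^d)})$. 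The key point is that this constant does not depend on $\nchi$.

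For (8.14), given $(\nchi,\varphi)\in\mathcal{X}$ I would pick any admissible approximating sequence $\{\nchi_n\}\subset D(\phims)$ with $\nchi_n\to\nchi$ in $H^{-1}(\Omega)$ and $\phims(\nchi_n)\to\varphi$, apply (8.13) to every $\nchi_n$ with the same $\zzeta$ and hence the same $C_\zzeta$, and take the liminf. After passing to the infimum over all such sequences, the claim reduces to the lower-semicontinuity inequality
\begin{equation*}
\liminf_{n\to\infty} I_{\nchi_n}(\zzeta) \ge I_\nchi(\zzeta).
\end{equation*}
The bound $\sup_n\phims(\nchi_n)<\infty$ together with $H^{-1}$-convergence forces, by $\mathrm{BV}$-compactness, $\nchi_n\to\nchi$ in $L^1(\Omega)$, whence $D\nchi_n\weaksto D\nchi$ as vector Radon measures; this gives convergence of the linear term $\int \dive\zzeta\,d|D\nchi_n|$ in the favourable direction, but \emph{not} automatically of the quadratic form $\int \mathbf{\nu}_n^T D\zzeta\,\mathbf{\nu}_n\dd|D\nchi_n|$. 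The latter passage is where I expect the only real difficulty to sit: the integrand $\nu\mapsto \dive\zzeta(x)-\nu^T D\zzeta(x)\nu$ is neither convex nor positively one-homogeneous in $\nu$, so Reshetnyak's theorem does not apply as stated. I would handle it by lifting to the integer-rectifiable varifolds $V_n$ canonically associated with the $\nchi_n$, extracting a varifold limit $V^*$ (possible by mass boundedness), identifying $I_{\nchi_n}(\zzeta)$ with the first variation $\delta V_n(\zzeta)$ so that $I_{\nchi_n}(\zzeta)\to \delta V^*(\zzeta)$ by continuity, and then exploiting the rectifiable-varifold structure inherited from the $\mathrm{BV}$ limit $\nchi$ (in the spirit of Luckhaus--Sturzenhecker and R\"oger) to compare $\delta V^*(\zzeta)$ with $I_\nchi(\zzeta)$. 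Granted this geometric measure-theoretic step, (8.14) follows immediately.
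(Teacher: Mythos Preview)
Your treatment of \eqref{ineq-slope1} is essentially the paper's: perturb $\nchi$ by the flow of $\zzeta$, invoke the first variation of area, and control the $H^{-1}$-distance of the perturbation. The only difference is in this last step: you sketch the bound $\|\varphi\circ T_t-\varphi\|_{L^2}\le c\,t\,\|\zzeta\|_\infty\|\nabla\varphi\|_{L^2}$ by direct integration along the flow, whereas the paper obtains the same $H^{-1}$ estimate via maximal functions (Lemmas~\ref{lemma:c-1} and~\ref{lemma:c-2}). Both routes are sound; your version is arguably cleaner, but the paper's is more explicit about constants.

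For \eqref{ineq-slope2} the two approaches part ways. The paper does \emph{not} go through varifolds: it simply selects a sequence $\{\nchi_k\}$ realising $|\partial^-\phims|(\nchi,\varphi)$ with $\phims(\nchi_k)\to\varphi$, extracts weak$^*$ convergence $D\nchi_k\weaksto D\nchi$, and then invokes Reshetnyak's lower semicontinuity theorem \cite[Thm.~2.38]{Ambrosio-Fusco-Pallara00} directly to obtain
\[
\liminf_{k}\int_\Omega\bigl(\dive\zzeta-\nu_k^T D\zzeta\,\nu_k\bigr)\dd|D\nchi_k|\ \ge\ \int_\Omega\bigl(\dive\zzeta-\nu^T D\zzeta\,\nu\bigr)\dd|D\nchi|,
\]
after which \eqref{ineq-slope2} follows from \eqref{ineq-slope1} applied to each $\nchi_k$. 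So the paper treats the passage to the limit as a one-line citation, not as the ``real difficulty'' you anticipate.

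Your objection that the $1$-homogeneous extension of $(x,\nu)\mapsto\dive\zzeta(x)-\nu^T D\zzeta(x)\nu$ is not convex in $\nu$ is a reasonable observation; the paper does not discuss this point. What you should know, however, is that your proposed varifold repair has its own gap: varifold convergence yields $\delta V_n(\zzeta)\to\delta V^*(\zzeta)$, but comparing $\delta V^*(\zzeta)$ with $I_\nchi(\zzeta)$ is not automatic, since first variation is \emph{linear} in the varifold and can have either sign---the possible excess mass of $V^*$ over the varifold of $\nchi$ may contribute with the wrong sign. So the varifold route does not obviously close the argument either, and is in any case much heavier than what the paper records.
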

\begin{proof}
We start by proving~\eqref{ineq-slope1} for a fixed  field $\zeta$
as in~\eqref{e:test-zeta}.
  Arguing as in the proof
of~\cite[Lemma~5.10]{Rossi-Savare04}, for $s \in \R$ we introduce
the flow $\Flow_s:\Omega\to\Omega$
   associated with $\zzeta$ via the ODE system
\begin{equation}
     \label{eq:62-rsss}
     \left\{
       \begin{aligned}
         \tfrac d{ds}\Flow_s(x)&=\zzeta(\Flow_s(x))\,,\\
         \Flow_0(x)&=x
       \end{aligned}
     \right. \qquad \text{for all $(s,x) \in \R \times \Omega$.}
   \end{equation}
Since $\Omega$ is regular (cf. with~\eqref{e:ambient8}) and
$\zzeta\cdot \nn=0$ on $\partial \Omega$,  $ \Flow_s(\Omega)=\Omega$
for all $s \in \R$. Further,
 the map $x \in \Omega \mapsto \Flow_s(x)$ is
a $\mathrm{C}^2$ diffeomorphism, with inverse $\Flow_{-s}: \Omega
\to \Omega $. Setting
 \[
  \Diff_s(x):=D_x\Flow_s(x),\quad J_s(x):=\det \Diff_s(x)
  \quad \text{for all $x \in \Omega$,}
  \]
it is immediate to check that
  \begin{equation}
  \label{eq:64}
 % \left\{
 %   \begin{aligned}
 %     \tfrac d{ds}\Diff_s(x)&=D\zzeta(\Flow_s(x))\Diff_s(x),\\
 %     D_0(x)&=I,
 %   \end{aligned}
 % \right.
 % \qquad
  \left\{
    \begin{aligned}
      \tfrac d{ds}J_s(x)&=\mathrm{div}\left(\zzeta(\Flow_s(x))\right)J_s(x),\\
      J_0(x)&=1
    \end{aligned}
  \right. \qquad \text{for all $(s,x) \in \R \times \Omega$.}
\end{equation}
It follows from~\eqref{eq:62-rsss} that (recall that $i:\R^d \to
\R^d$ denotes the identity map)
\begin{equation}
\label{est1} \| \Flow_s - i\|_{L^\infty (\Omega;\R^d)} \leq
|s|\|\zzeta\|_{L^\infty(\Omega)}\,.
\end{equation}
In the same way, \eqref{eq:64} and the Gronwall lemma yield for all
$s\in [-1,1]$
\begin{equation}
\label{est2}
\begin{aligned}
&
 \| J_s \|_{L^\infty (\Omega)} \leq \exp(
|s|\|\mathrm{div}(\zeta)\|_{L^\infty (\Omega)}), \\
&
 \| J_s - 1\|_{L^\infty (\Omega)} \leq \|
\mathrm{div}(\zeta)\|_{L^\infty (\Omega)}  \exp(
|s|\|\mathrm{div}(\zeta)\|_{L^\infty (\Omega)})\,.
\end{aligned}
\end{equation}
 Then,  for every fixed  $\nchi \in D(\phims)$, we
consider its perturbation $\nchi_s : \Omega \to \{ -1,1\}$,   for $s
\in \R$, defined by
  \[
     \nchi_s(x):=\nchi(\Flow_{-s}(x)) \qquad \forall\, x \in
     \Omega\,.
   \]
   Now, $\nchi_s$ still belongs to $BV(\Omega;\{-1,1\})$ and it can
   be verified that
    $|D\nchi_s|$    coincides with the $(m-1)$-dimensional
   Hausdorff measure restricted to $S_s=\Flow_s(S)$  ($S$ being the essential boundary separating the phases).   Therefore,
   the first variation formula for the area functional
   (see, e.g.,~\cite[Thm. 7.31]{Ambrosio-Fusco-Pallara00})
   yields
   \[
     \frac{d}{ds}\Big(\int_\Omega |D\nchi_s|\Big)_{s=0}=
     \int_\Omega       \big(\dive \zzeta-\nnu^T \,D\zzeta\,
     \nnu\big) \dd|D\nchi|.
   \]
Hence, we have the following chain of inequalities
\[
\begin{aligned}
|\partial \phims|(\nchi) & =\limsup_{\| v -\nchi\|_{H^{-1}(\Omega)}
\to 0} \frac{\left(\int_\Omega |D\nchi|-\int_\Omega |Dv|
\right)^+}{\| v -\nchi\|_{H^{-1}(\Omega)}}
 \\ & \geq \limsup_{s  \to 0}\left( \frac{\left(\int_\Omega
|D\nchi|-\int_\Omega |D\nchi_s| \right)^+}{s} \frac{s}{\| \nchi
-\nchi_s\|_{H^{-1}(\Omega)}}\right) \\  & \geq \limsup_{s  \to
0}\frac{s}{\| \nchi -\nchi_s\|_{H^{-1}(\Omega)}} \left(
 \int_\Omega       \big(-\dive \zzeta+\nnu^T \,D\zzeta\,
     \nnu\big) \dd|D\nchi|\right) \,.
\end{aligned}
\]
To conclude  for~\eqref{ineq-slope1},  we are going to show that
\begin{equation}
\label{e:final-step} \limsup_{s  \to 0}\frac{s}{\| \nchi
-\nchi_s\|_{H^{-1}(\Omega)}} \geq C_\zzeta\,,
\end{equation}
with $C_\zzeta >0$ if $\zzeta$ is not identically zero, depending on
$\| \zzeta \|_{W^{1,\infty} (\Omega;\R^d)} $. Indeed, to
 evaluate $\| \nchi -\nchi_s\|_{H^{-1}(\Omega)}$  we fix $v
\in H_0^1 (\Omega)$, and for later convenience extend it to a
function $\tilde{v} \in H^1 ( \R^d)$. There holds
\begin{equation}
\label{e:stanca}
\begin{aligned}
\pairing{H^{-1}(\Omega)}{H_0^{1}(\Omega)}{\nchi -\nchi_s}{v} &  =
\int_{\Omega} \left(\nchi(x) - \nchi(\Flow_{-s}(x)) \right) v(x) \dd
x\\ & = \int_{\Omega} \nchi(x)\left( v(x)  - v(\Flow_{s}(x))
|J_s(x)| \right) \dd x \\ &
\begin{aligned} =  I_{1} & +  I_{2}:= \int_{\Omega}
\nchi(x)\left( v(x)  - v(\Flow_{s}(x)) \right) \dd x
\\ & + \int_{\Omega}  \nchi(x) v(\Flow_{s}(x))(J_0(x)-|J_s(x)|)\dd x\,,
\end{aligned}
\end{aligned}
\end{equation}  where the second equality follows from the change of variable
formula.  % and the third one by trivial calculations and
%the fact that $J_0(x)=1$ for all $x \in \Omega$.
 Then, supposing $|s| \leq 1$ and setting
$\lambda(\zeta):= \|\zzeta\|_{L^\infty(\Omega)}$,
  we
estimate $I_{1}$  by means  the maximal function
$M_{\lambda(\zeta)}$ of $v$ (see Section~\ref{ss:appendix-c}). In
view of Lemma~\ref{lemma:c-1}, there holds
\begin{equation}
\label{e:serve1} \|M_{\lambda(\zeta)} \nabla v\|_{L^2
(\Omega;\R^d)}^2 \leq  C(d,2) \int_{B_{\bar{r}+ \lambda(\zeta)}(0)}
|\nabla \tilde{v}(x)|^2 \dd x =  C(d,2) \| \nabla v
\|_{L^2(\Omega)}^2
 \end{equation}
where $\bar{r}>0$ is such  that $\Omega \subset B_{\bar{r}}(0) $. In
the same way, changing variables  we have
\begin{equation}
\label{e:serve2}
\begin{aligned}
 \int_{\Omega} \left|M_{\lambda(\zeta)} (\nabla
v(\Flow_{s}(x)))\right|^2 \dd x &  = \int_{\Omega}
\left|M_{\lambda(\zeta)} (\nabla v(y))\right|^2 |J_{-s}(y)|\dd y\\ &
\leq \exp( |s|\|\mathrm{div}(\zeta)\|_{L^\infty (\Omega)})  C(d,2)
\| \nabla v \|_{L^2(\Omega)}^2\,,
\end{aligned}
\end{equation}
the latter inequality from the first of~\eqref{est2}.
 Using~\eqref{est1},
Lemma~\ref{lemma:c-2} and \eqref{e:serve1}--\eqref{e:serve2},
 we  thus have
\begin{equation}
\label{estima1}
\begin{aligned}
 |I_{1} | &  \leq C(d) \int_{\Omega}
|\Flow_{s}(x) -x| \left ( M_{\lambda(\zeta)} (\nabla v(x)) +
M_{\lambda(\zeta)} (\nabla v(\Flow_{s}(x)))  \right) \dd x
\\ &
\leq  C(d,2)  C(d,2)^{1/2} \| \Flow_s - i\|_{L^\infty
(\Omega;\R^d)}\| \nabla v \|_{L^2(\Omega)} \left( 1+ \exp( \frac12
|s|\|\mathrm{div}(\zeta)\|_{L^\infty (\Omega)}) \right)\\ & \leq C_d
 \|\zzeta\|_{L^\infty(\Omega)} \left( 1+ \exp( \frac12
|s|\|\mathrm{div}(\zeta)\|_{L^\infty (\Omega)}) \right)
 |s| \|v \|_{H_0^1 (\Omega)}\,,
\end{aligned}
\end{equation}
the constant $C_d>0$ only depending on the dimension.  As for $I_2$
we have
\begin{equation}
\label{estima2} |{I}_{2} | \leq \|v \|_{L^2 (\Omega)} \|J_s -1
\|_{L^\infty(\Omega)} \leq   \| \mathrm{div}(\zeta)\|_{L^\infty
(\Omega)}  \exp( |s|\|\mathrm{div}(\zeta)\|_{L^\infty (\Omega)})\|v
\|_{L^2 (\Omega)}\,.
\end{equation}
Collecting \eqref{e:stanca} and \eqref{estima1}--\eqref{estima2}, we
immediately infer \eqref{e:final-step}.

In order to prove~\eqref{ineq-slope2} for all $\nchi \in D(\phims)$
and $\varphi \geq \phims(\nchi)$,
 we argue in the following way:
according to the definition of $|\partial^- \phims|$
 we fix a sequence $\{\nchi_k \} \subset D(\phims)$ converging to
 $\chi$ in $H^{-1}(\Omega)$ and such that
 \begin{equation}
 \label{aa}
\left||\partial \phims|(\nchi_k) -  |\partial^-
\phims|(\nchi,\varphi)\right|\leq \frac1k, \qquad \left|\int_\Omega
|D\nchi_k| - \varphi \right| \leq \frac1k\,.
\end{equation}
It follows from the latter inequality
and~\cite[Thm.~1.5.9]{Ambrosio-Fusco-Pallara00} that  $\{
D\nchi_k\}$ has a  weakly$^*$ converging subsequence   in the sense
of measures. Therefore,
   $D\nchi_k\weakto^* D\nchi$ and, thanks to
Reshetnyak Theorem
  \cite[Thm. 2.38]{Ambrosio-Fusco-Pallara00}, there holds
  \[
\liminf_{k \to \infty} \left( \int_\Omega\big(\dive
\zzeta-\mathbf{\nu}^T \,D\zzeta\,
       \mathbf{\nu}\big) \dd|D\nchi_k| \right) \geq \int_\Omega\big(\dive \zzeta-\mathbf{\nu}^T \,D\zzeta\,
       \mathbf{\nu}\big) \dd|D\nchi|
  \]
Thus, using the first of~\eqref{aa}
 we can pass to the limit
  in \eqref{ineq-slope1} as $k\to+\infty$ and
  conclude~\eqref{ineq-slope2}.
\end{proof}
\begin{remark}
\label{rmk:final} \upshape As a consequence of
Proposition~\ref{prop:ms}, for every rest point
$(\bar{\nchi},\bar{\varphi}) $ of the semiflow of the generalized
solutions driven by the functional $\phims$ there holds
\begin{equation}
\label{e:bb}
\begin{gathered}
\int_\Omega\big(\dive \zzeta-\mathbf{\bar{\nu}}^T \,D\zzeta\,
       \mathbf{\bar{\nu}}\big) \dd|D\bar{\nchi}|=0
       \quad
       \text{for all }\zzeta\in C^2(\overline\Omega;\R^d) \ \ \text{with
$\zzeta\cdot  \mathbf{n}=0$ on $\partial\Omega$}
\end{gathered}
\end{equation}
($\displaystyle\mathbf{\bar{\nu}}=\frac{d(D\bar{\nchi})}{d|D\bar{\nchi}|}$
being the measure-theoretic
   inner normal to the boundary of the phase $\{\bar{\nchi} \equiv 1\}$).
    This
 suggests that a way to prove that the generalized semiflow associated with $\phims$ complies with
  condition~\eqref{A12}
 could be to  deduce  from condition~\eqref{e:bb} some universal bound for
 every rest point $\bar{\chi}$. However, this  presently  remains an open
 problem.
\end{remark}
%%%%%%%%%%%%%%%%%%%%%%%%%%%%%%%%%
\appendix
%%%%%%%%%%%%%%%%%%%%%
\section{Generalized semiflows} \label{s:a2}
\noindent \setcounter{equation}{0}
 In the following, we  shall denote by
 $(\cx,\dcx)$  a (not necessarily complete) metric space. We
recall that the {Hausdorff semidistance} or {\it excess}
$e_{\cx}(A,B)$ of two non-empty subsets $A,  B \subset \cx$ is given
by $e_{\cx}(A,B):= \sup_{a \in A} \inf_{b \in B} \dcx(a,b)$. For all
$\eps>0$, we also denote by $ B(0,\eps)$ the ball $ B(0,\eps) :=\{x
\in X \ : \ \dcx(x,0) < \eps\}$, and by $N_\eps (A):= A + B(0,\eps)$
the $\eps$-neighborhood of  a subset $A$.
 %%%%%%%%%%%
%%%%%%%%%%%%%%%%%%%%%%%%%%%%%%%%
%%%%%%%%%%%%%%%%%%%%%%%%%%%%%%%%%%
\begin{definition}[Generalized semiflow]
\label{def:generalized-semiflow}
 A \emph{generalized semiflow}
$\sfl$ on $\cx$ is a family of maps $g:[0,+\infty) \to \cx$
(referred to as \emph{solutions}), satisfying:
\begin{align}
&   \text{\emph{(Existence)}}\nonumber\\
&\qquad\text{For any $g_0 \in \cx$ there exists at
least one $g \in \sfl$ with $g(0)=g_0.$}\tag{H1}\label{H1}\\
&{}\nonumber\\
&\text{\emph{(Translates of solutions are solutions)}}\nonumber\\
&\qquad\text{For any $g \in \sfl$ and $\tau \geq 0$, the map $g^\tau
(t):=g(t+\tau),$ $t
\in [0,+\infty),$}\nonumber\\
&\qquad\text{belongs to $\sfl$.}\tag{H2}\label{H2}\\
&{}\nonumber\\
&\text{\emph{(Concatenation)}}\nonumber\\
&\qquad\text{For any $ g $, $ h \in \sfl$ and $t \geq 0$  with
$h(0)=g(t)$, then  $z \in \sfl$, $ z$
being}\nonumber\\
&\qquad\text{the map defined by $ z(\tau):=  g(\tau)$ if $0 \leq
\tau
\leq t,$ and $ h(\tau-t)$ if $ t <\tau$.}\tag{H3}\label{H3}\\
&{}\nonumber\\
&\text{\emph{(Upper semicontinuity with respect to initial data)}}\nonumber\\
&\qquad\text{If $\{g_n \} \subset \sfl$ and $g_n (0) \to g_0, $ then
there
exist a subsequence $\{g_{n_k}\}$ of }\nonumber\\
&\qquad\text{$\{g_n \}$ and $g \in \sfl$ such that $ g(0)=g_0$ and
$g_{n_k}(t) \to g(t)$ for all $t \geq 0.$}\tag{H4}\label{H4}
\end{align}

Furthermore, a generalized semiflow $\sfl$ may fulfill the following
\emph{measurability} and \emph{continuity}  properties:
\begin{itemize}
\item[(C0)] Each $g \in \sfl$ is strongly measurable on
$(0,+\infty),$ i.e. there exists a sequence $\{f_j\}$ of measurable
countably valued maps converging  to $g$ a.e. on $(0,+\infty).$
\item[(C1)] Each $g \in \sfl$ is continuous on $(0,+\infty).$
 \item[(C2)]
For any $\{g_n \} \subset \sfl$ with  $g_n (0) \to g_0, $ there
exist a subsequence $\{g_{n_k}\}$ of $\{g_n \}$ and $g \in \sfl$
such that $g(0)=g_0$ and $ g_{n_k} \to g$ uniformly in compact
subsets of $(0,+\infty).$
 \item[(C3)]Each $ g  \in \sfl $ is continuous on $ [0,+\infty).$
\item[(C4)] For any $\{g_n \} \subset \sfl$ with  $g_n (0) \to g_0, $ there exists
a subsequence $\{g_{n_k}\}$ of $\{g_n \}$ and $g \in \sfl$ such that
$g(0)=g_0$ and $ g_{n_k} \to g$ uniformly in compact subsets of
$[0,+\infty).$
\end{itemize}
\end{definition}

\paragraph{\bf %Orbits,
$\boldsymbol \omega$-limits and attractors.} Given a generalized
semiflow $\sfl$ on $\cx$, we introduce for every $t \geq 0 $ the
operator ${T}(t): 2^\cx \to 2^\cx$ defined by
 \begin{equation}
 \label{eq:operat-T}
  {T}(t)E:=\{ g(t) \ : \
g \in \sfl \ \  \text{with} \ \ g(0) \in E\}, \quad E \subset \cx.
 \end{equation}
 The family of operators $\{{T}(t)\}_{t \geq 0}$ defines a semigroup on $2^\cx$, i.e., it
  fulfills  the following
 property
 \[
   { T}(t+s) B = { T}(t){ T}(s) B \quad \forall s,t \geq 0 \quad\forall  B \subset
   \cx.
 \]
Given  a solution $ g \in \sfl$, we introduce %the \emph{positive
%orbit of $g$} as the set $\gamma^+ (g):= \{ g(t) \ : \ t \geq
%0\}$, while
its \emph{$\omega$-limit} $\omega(g)$ %is defined
by
$$
\omega(g):= \{ x \in \cx \ : \ \exists \{t_n\}, \ t_n \to +\infty, \
\text{such that} \ \ g(t_n) \to x \}.
$$
We say that $w: \R \to \cx$ is  a \emph{complete orbit} if, for any
$s \in \R$, the translate
 map $w^s \in \sfl$ (cf. \eqref{H2}).
%Moreover, we may consider the \emph{positive orbit of a subset $E
%\subset \cx$}, i.e. the set $ \gamma^+ (E):=\cup_{t \geq 0} T(t)E=
%\cup \{\gamma^+ (g) \ : \ g \in \sfl, \ g(0) \in E\}, $ and, for
%every $\tau \geq 0$, we define $ \gamma^\tau (E):=\cup_{t \geq
%\tau} T(t)E= \gamma^+ (T(\tau)E).$
Finally, the \emph{$\omega$-limit} of $E$ is defined as
\begin{align}
\omega(E):=\big\{
 &x \in \cx \ : \ \exists \{g_n\} \subset \sfl \
 \text{such that $\{g_n (0)\} \subset E$,} \nonumber\\
  &\text{$\{g_n (0)\}$ is bounded,
and} \ \  \exists t_n \to +\infty \ \text{with $ g_n (t_n) \to x$}
\big\}.\nonumber
\end{align}
Given subsets $F, E \subset \cx$, we say that $F $ \emph{attracts}
$E$ if
$$e_{\cx}(T(t)E,F) \to 0 \ \ \text{as} \ \  t \to
+\infty.$$
 Moreover, we say that $F $ is \emph{positively
invariant} if $T(t)F\subset F$ for every $t \geq 0$, that $F$ is
\emph{quasi-invariant} if for any $v \in F$ there exists a complete
orbit $w$ with $w(0)=v$ and $w(t) \in F$ for all $t \in \R$, and
finally that $F$ is \emph{invariant} if $T(t)F = F$ for every $t
\geq 0$ (equivalently, if it is both positively and
quasi-invariant).
%%%%%%%%%%%%%%%%%%
%%%%%%%%%%%%%%%
\begin{definition}[Global Attractor]
\label{def:w-attract}
 Let $ \sfl$ be  a {generalized semiflow}.
  We say that a non-empty set $\att$
is a \emph{global attractor} for $ \sfl $ if it is compact,
invariant, and attracts all bounded sets of $ \cx$.
\end{definition}

\begin{definition}[Point dissipative, asymptotically compact]
Let $ \sfl $ be a generalized semiflow. We say that $
  \sfl $ is \emph{point dissipative} if there exists a bounded
set $B_0 \subset \cx$ such that for any $g \in \sfl$ there exists
$\tau \geq 0$ such that $g(t) \in B_0 $ for all $t \geq \tau$.
Moreover, $\sfl$ is \emph{asymptotically compact} if for any
sequence $\{g_n\} \subset \geneset$ such that $\{g_n (0)\}$ is
bounded
 and for any sequence $t_n \to + \infty $, the
sequence $\{g_n (t_n)\}$ admits a convergent subsequence.
\end{definition}
%%%%%%%%%%%

\paragraph{\bf Lyapunov function.} We say that a complete orbit $g \in \sfl$ is \emph{stationary}
if there exists $x \in \cx$ such that $g(t)=x$ for all $t \in \R$.
Such $x$ is then called a \emph{rest point}.  Note that the set of
rest points of  $\sfl $, denoted by $Z(\sfl)$, is closed in view of
\eqref{H4}.
\begin{definition}[Lyapunov function] $V: \cx \to \R$ is
 a \emph{Lyapunov function} for $\sfl$ if: $V$ is
continuous, $V(g(t)) \leq V(g(s))$ for all $g \in \sfl$ and $0 \leq
s \leq t$ (i.e., $V$ decreases along solutions), and, whenever the
map $t \mapsto V(g(t))$ is constant for some complete orbit $ g$,
then $ g $ is a stationary orbit.
\end{definition}

We say that a global attractor $\att$ for $\sfl$ is \emph{Lyapunov
stable} if for any $\eps>0$ there exists $\delta>0$ such that for
any $E \subset \cx$ with $e_{\cx}(E,\att) \leq\delta, $ then
$e_{\cx}(T(t)E,\att) \leq\eps$ for all $t \geq 0.$

 Finally, we recall the following two results, providing
necessary and sufficient conditions for a semiflow to admit a global
attractor, cf. \cite[Thms.~3.3,~5.1,~6.1]{Ball97}.
\begin{theorem}
\label{thm:ball1} A generalized semiflow $\sfl$ has a global
attractor  if and only if it is point dissipative and asymptotically
compact. Moreover, the attractor $\att$ is unique, it is the maximal
compact invariant subset of $\cx$, and it  can be characterized as
\[
\att= \cup \{\omega(B) \ : \ \text{$B \subset \cx$,
bounded}\}=\omega(\cx).
\]
Besides,  if $\sfl $ complies with \emph{(C1)} and \emph{(C4)}, then
$\att$ is Lyapunov stable.
\end{theorem}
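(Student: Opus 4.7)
\textbf{Proof proposal for Theorem~\ref{thm:ball1}.}
The necessity of point dissipativity and asymptotic compactness is straightforward. If a global attractor $\att$ exists, then for any bounded $B$ the convergence $e_{\cx}(T(t)B,\att)\to 0$ shows that the (bounded) $1$-neighborhood $N_1(\att)$ is an absorbing set, so $\sfl$ is point dissipative. Moreover, given $\{g_n\}\subset \sfl$ with $\{g_n(0)\}$ bounded and $t_n\to+\infty$, the attraction property yields $\dcx(g_n(t_n),\att)\to 0$; since $\att$ is compact, a subsequence of $\{g_n(t_n)\}$ converges, giving asymptotic compactness.

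For sufficiency, the plan is to construct the attractor as an $\omega$-limit and verify its properties via the four axioms \eqref{H1}--\eqref{H4}. First I would show the following lemma: if $\sfl$ is asymptotically compact, then for any non-empty bounded $B\subset \cx$, the set $\omega(B)$ is non-empty, compact, invariant, and attracts $B$. Non-emptiness and precompactness follow directly from asymptotic compactness applied to $\{g_n\}\subset \sfl$ with $g_n(0)\in B$ and $t_n\to\infty$. Closedness is immediate from a diagonal argument using \eqref{H4}. Positive invariance $T(t)\omega(B)\subset\omega(B)$ uses \eqref{H3} to concatenate with trajectories starting from $\omega$-limit points, while quasi-invariance $\omega(B)\subset T(t)\omega(B)$ follows from \eqref{H4} combined with the translation property \eqref{H2}. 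The attraction property is standard: if $e_{\cx}(T(t_n)B,\omega(B))\not\to 0$, we extract $g_n\in\sfl$ with $g_n(0)\in B$ and $\dcx(g_n(t_n),\omega(B))\geq\delta>0$; asymptotic compactness would yield a subsequential limit in $\omega(B)$, a contradiction.

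Given point dissipativity, fix a bounded set $B_0$ absorbing every trajectory. Set $\att:=\omega(B_0)$. The lemma above gives that $\att$ is non-empty, compact, and invariant. To show it attracts every bounded $B$, one uses that every trajectory starting in $B$ eventually enters $B_0$ (after some time depending a priori on the trajectory); the key technical step, where asymptotic compactness is again crucial, is to promote this pointwise absorption into uniform absorption of the set $T(\tau)B$ for sufficiently large $\tau$, after which $T(t)B\subset T(t-\tau)B_0$ attracts $\att$ as $t\to\infty$. Uniqueness and the identification $\att=\omega(\cx)=\bigcup\{\omega(B):B\subset\cx\text{ bounded}\}$ follow because any compact invariant set is attracted by $\att$ hence contained in it, while $\omega(B)\subset \omega(\cx)$ trivially and $\omega(\cx)\subset \att$ by the attracting property applied to bounded slices.

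The Lyapunov stability assertion under \eqref{H2},\,(C1),\,(C4) is the only step requiring the enhanced continuity. The plan is to argue by contradiction: if stability failed, there would exist $\eps_0>0$, sequences $\delta_n\downarrow 0$, $E_n$ with $e_{\cx}(E_n,\att)\leq \delta_n$, points $v_n\in E_n$, times $t_n\geq 0$, and solutions $g_n\in\sfl$ with $g_n(0)=v_n$ such that $\dcx(g_n(t_n),\att)\geq\eps_0$. Since $\att$ is compact we may assume $v_n\to v_\infty\in\att$. The main obstacle, which I expect to be the delicate point, is controlling the behavior on the full half-line $[0,+\infty)$: axiom \eqref{H4} alone only gives pointwise convergence and thus could allow instantaneous jumps away from $\att$. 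Hypothesis (C4) upgrades this to uniform convergence on compact subsets of $[0,+\infty)$ including $t=0$, so after extraction $g_n\to g$ uniformly on compacts with $g\in\sfl$ and $g(0)=v_\infty\in\att$. By invariance $g(t)\in\att$ for all $t\geq 0$. If $\{t_n\}$ is bounded, uniform convergence on $[0,\sup t_n]$ directly contradicts $\dcx(g_n(t_n),\att)\geq\eps_0$; if $t_n\to\infty$, one decomposes $g_n(t_n)=(T(t_n-\tau_n)\circ g_n)(\tau_n)$ for a suitable $\tau_n$ entering the absorbing set and combines (C1) with asymptotic compactness to again reach a contradiction.
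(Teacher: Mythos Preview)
The paper does not actually prove this theorem. It is stated in the Appendix as a recalled result from \textsc{Ball}'s theory of generalized semiflows, with the reference ``cf.\ \cite[Thms.~3.3,~5.1,~6.1]{Ball97}''. There is therefore no ``paper's own proof'' to compare against.

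That said, your sketch is faithful to Ball's original argument. A couple of points where Ball's proof is more explicit than your outline: (i) in \cite{Ball97} the implication ``point dissipative $+$ asymptotically compact $\Rightarrow$ global attractor'' is factored through the intermediate notion of \emph{eventual boundedness} (for every bounded $B$ there is $\tau$ with $\bigcup_{t\geq \tau} T(t)B$ bounded). Ball first proves existence of the attractor under point dissipativity, eventual boundedness and asymptotic compactness, and then shows separately that point dissipativity and asymptotic compactness together imply eventual boundedness. Your ``key technical step'' of promoting pointwise absorption to uniform absorption is exactly this, but you should be aware it is nontrivial and uses asymptotic compactness in an essential way (a direct argument from point dissipativity alone fails). (ii) For Lyapunov stability, your contradiction argument is along the right lines; in Ball's version one uses (C4) to get uniform convergence on compacts of $[0,+\infty)$ and then exploits that $g([0,\infty))\subset \att$ together with the fact that $\att$ attracts a neighborhood of itself, which handles both the bounded and unbounded $\{t_n\}$ cases at once. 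Your splitting into cases is fine but slightly less economical.
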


\begin{theorem}
\label{thm:ball2} Assume  that $\sfl $ is asymptotically compact,
admits a Lyapunov function $V$, and that the sets of its rest points
$\rest$ is bounded. Then, $\sfl$ is also point dissipative, and thus
admits a global attractor $\att.$ Moreover,
\[
 \omega(u) \subset \rest \quad \text{for all
trajectories $ u \in \sfl$.}
\]
 \end{theorem}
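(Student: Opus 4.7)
\textbf{Proof plan for Theorem~\ref{thm:ball2}.}
The strategy is to first establish that every $\omega$-limit of a trajectory lies in $Z(\sfl)$, then use the boundedness of $Z(\sfl)$ together with asymptotic compactness to get point dissipativity, and finally invoke Theorem~\ref{thm:ball1}. Fix $u \in \sfl$ and observe that the Lyapunov property forces $t \mapsto V(u(t))$ to be non-increasing. If $\omega(u) = \emptyset$ there is nothing to show; otherwise, boundedness from below of $V$ along the trajectory (which will follow from asymptotic compactness, as $\{u(t_n)\}$ is relatively compact for every $t_n \to \infty$) guarantees that $V(u(t))$ decreases to some finite limit $\ell \in \R$. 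Hence for any $x \in \omega(u)$, taking $u(t_n) \to x$ and using continuity of $V$, one gets $V(x) = \ell$. So $V \equiv \ell$ on $\omega(u)$.

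The key step is then to show that through each $x \in \omega(u)$ there passes a complete orbit $w$ with $w(\R) \subset \omega(u)$, i.e.~that $\omega(u)$ is quasi-invariant. I would construct $w$ as follows: fix $x \in \omega(u)$ with $u(t_n) \to x$ and $t_n \uparrow \infty$. For each integer $k \geq 0$, consider the shifted solutions $u^{t_n - k}(\cdot) = u(\cdot + t_n - k) \in \sfl$ (for $n$ large enough that $t_n \geq k$); by asymptotic compactness, $\{u^{t_n-k}(0)\} = \{u(t_n-k)\}$ has an accumulation point. Applying (H4) and a diagonal extraction over $k$, I obtain a subsequence along which $u^{t_n-k}$ converges pointwise to some $w_k \in \sfl$ on $[0,+\infty)$ for every $k$. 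By the translation/concatenation axioms (H2)--(H3), the maps $w_k$ can be glued into a single complete orbit $w: \R \to \cx$ with $w(0) = x$. Each $w(s)$ is, by construction, a limit of $u(t_n + s)$ with $t_n + s \to \infty$, hence $w(s) \in \omega(u)$ for all $s \in \R$. In particular $V(w(s)) \equiv \ell$, so by the Lyapunov property $w$ is stationary and $x \in Z(\sfl)$. This proves $\omega(u) \subset Z(\sfl)$.

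For point dissipativity, let $B_0 := N_1(Z(\sfl))$, which is bounded because $Z(\sfl)$ is. Suppose by contradiction that some $g \in \sfl$ has $g(t_n) \notin B_0$ along $t_n \uparrow \infty$, i.e.~$\inf_{z \in Z(\sfl)} \dcx(g(t_n),z) \geq 1$. Asymptotic compactness yields a subsequence with $g(t_n) \to x$; then $x \in \omega(g) \subset Z(\sfl)$ by the previous step, contradicting the distance estimate. Hence every trajectory eventually enters $B_0$, i.e.~$\sfl$ is point dissipative. Together with asymptotic compactness, Theorem~\ref{thm:ball1} gives the global attractor $\att$. The second assertion $\omega(u) \subset Z(\sfl)$ is exactly what was proved in the first step.

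The hard part is the construction of the backward-in-time complete orbit $w$ through $x \in \omega(u)$: one must perform the diagonal extraction over both the shift parameter $k$ and the approximating times $t_n$, while verifying at each stage that (H4) can be applied (which requires the relative compactness of $\{u(t_n - k)\}$ in $(\cx,\dcx)$ for each fixed $k$, itself a consequence of asymptotic compactness since $t_n - k \to \infty$). All other steps, including the monotone convergence of $V(u(t))$ and the deduction of point dissipativity, are essentially formal once the quasi-invariance of $\omega(u)$ is secured.
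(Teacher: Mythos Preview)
Your proof is correct and follows the standard route. Note, however, that the paper does not actually prove Theorem~\ref{thm:ball2}: it is stated in the Appendix as a result \emph{recalled} from \textsc{Ball}~\cite[Thms.~3.3,~5.1,~6.1]{Ball97}, without proof. Your argument---first showing $\omega(u)\subset Z(\sfl)$ via the construction of a complete orbit through each $x\in\omega(u)$ along which $V$ is constant, then deducing point dissipativity from the boundedness of $Z(\sfl)$ and asymptotic compactness, and finally invoking Theorem~\ref{thm:ball1}---is precisely the approach of~\cite{Ball97}. Two minor remarks: (i) asymptotic compactness applied to the constant sequence $g_n=u$ already forces $\omega(u)\neq\emptyset$, so the case distinction is unnecessary; (ii) in the gluing step you should make explicit the compatibility $w_k(s)=w_{k+1}(s+1)$ for $s\geq 0$, which follows from the pointwise limits $w_k(s)=\lim_n u(t_n-k+s)$, so that $w(s):=w_k(s+k)$ for $s\geq -k$ is well defined and $w^\tau=(w_k)^{\tau+k}\in\sfl$ for any $\tau$ and $k>-\tau$.
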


\section{The proof of Proposition~\ref{prop:key1} revisited}
\noindent Preliminarily, we state and prove the following
\begin{lemma}
\label{l:euler} Under the assumptions of
Proposition~\ref{prop:key1}, let $z_j^k$ fulfill
\begin{equation}
\label{e:minimiz-bis}
 z_j^k \in \argmin_{v \in \Banach}
\left\{ \frac{\|v -u_k\|^2}{2r_j^k} + \phi(v) \right\}\,.
\end{equation}
 Then,
\begin{equation}
\label{e:euler-repeated}
 \rmJ_2 \left(\frac{z_j^k-u_k}{r_j^k}
\right) + \slmsbd\phi(z_j^k) \ni 0.
\end{equation}
\end{lemma}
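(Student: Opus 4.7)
The plan is to exploit the \emph{global} minimality of $z_j^k$ together with convexity of the quadratic perturbation in order to exhibit an element of $\frsbd\phi(z_j^k)$ by hand, without invoking any subdifferential sum rule and hence without needing the Fréchet differentiable renorm~\eqref{frechet-renorm}. Once such an element is produced, it will sit in $\slmsbd\phi(z_j^k)$ for free via constant sequences.

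First, I would record the explicit description $\frsbd\psi(v) = \rmJ_2\bigl((v-u_k)/r_j^k\bigr)$ for the convex subdifferential of $\psi(v):=\|v-u_k\|^2/(2r_j^k)$. This is immediate from the standard identity $\frsbd(\|\cdot\|^2/2)=\rmJ_2$ together with the elementary homogeneity $\xi\in\rmJ_2(w)\Leftrightarrow \xi/r\in\rmJ_2(w/r)$ for $r>0$; by reflexivity of $\Banach$ this set is non-empty.

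Next, pick any $\eta\in\frsbd\psi(z_j^k)=\rmJ_2\bigl((z_j^k-u_k)/r_j^k\bigr)$. Since $z_j^k$ minimizes $\psi+\phi$ on all of $\Banach$, we have $\phi(v)-\phi(z_j^k)\ge\psi(z_j^k)-\psi(v)$ for every $v\in\Banach$. Combining this with the global convex subgradient inequality $\psi(v)\ge\psi(z_j^k)+\langle\eta,v-z_j^k\rangle$ yields
\[
\phi(v)-\phi(z_j^k)\ge\langle -\eta,\,v-z_j^k\rangle\qquad\text{for every }v\in\Banach.
\]
This global bound trivially implies the local Fréchet condition~\eqref{def:frechet}, hence $-\eta\in\frsbd\phi(z_j^k)$. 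Taking the constant sequences $z_n\equiv z_j^k$, $\xi_n\equiv -\eta$ in the definition of the strong limiting subdifferential, we conclude $-\eta\in\slmsbd\phi(z_j^k)$. Setting $w:=\eta$ gives $w\in\rmJ_2\bigl((z_j^k-u_k)/r_j^k\bigr)$ together with $w+(-\eta)=0$, which is precisely~\eqref{e:euler-repeated}.

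The only conceptual pitfall to avoid is the temptation to appeal to a subdifferential sum rule at $z_j^k$: in a reflexive Banach space without a smooth renorming, $\frsbd(\psi+\phi)$ need not equal $\frsbd\psi+\frsbd\phi$, so extracting an Euler equation by differentiating the two summands separately is not legitimate. The argument above sidesteps this entirely by using the \emph{global} (rather than merely local) convex subgradient inequality for $\psi$, which is what makes the approach work in an arbitrary reflexive $\Banach$ and hence provides the alternative proof of Proposition~\ref{prop:key1} without invoking~\eqref{frechet-renorm}.
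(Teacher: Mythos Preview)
Your chaining of inequalities is broken. From minimality you correctly get $\phi(v)-\phi(z_j^k)\ge\psi(z_j^k)-\psi(v)$, and the convex subgradient inequality for $\eta\in\partial\psi(z_j^k)$ gives $\psi(v)\ge\psi(z_j^k)+\langle\eta,v-z_j^k\rangle$, i.e.\ $\psi(z_j^k)-\psi(v)\le\langle-\eta,v-z_j^k\rangle$. But ``$A\ge B$'' and ``$B\le C$'' do \emph{not} imply ``$A\ge C$'': the subgradient inequality bounds $\psi(z_j^k)-\psi(v)$ from \emph{above}, whereas you need a bound from \emph{below} to push through to $\phi$. What you would actually need is $\psi(v)-\psi(z_j^k)\le\langle\eta,v-z_j^k\rangle+o(\|v-z_j^k\|)$, which is precisely Fr\'echet differentiability of $\psi$ at $z_j^k$ --- the very hypothesis~\eqref{frechet-renorm} you are trying to avoid.

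A concrete counterexample: take $\Banach=\R^2$ with $\|\cdot\|_\infty$, $\phi(v)=-v_1$, $u_k=0$, $r_j^k=1$. Then $z=(1,1)$ is a minimizer of $\tfrac12\|v\|_\infty^2-v_1$, and $\rmJ_2(z)=\{(t,1-t):t\in[0,1]\}$, while $\frsbd\phi(z)=\{(-1,0)\}$. For $\eta=(1/2,1/2)\in\rmJ_2(z)$ one has $-\eta\notin\frsbd\phi(z)$, so your claim that \emph{any} $\eta$ works is false. The lemma only asserts the existence of \emph{some} such $\eta$, and locating it genuinely requires more than the global convex inequality; the paper obtains it via Mordukhovich's fuzzy sum rule~\cite[Lemma~2.32]{Mordukhovich06}, producing approximate Euler equations at nearby points $z_\eta^1,z_\eta^2$ and passing to the limit, which is exactly where the \emph{strong} limiting subdifferential $\slmsbd\phi$ (rather than $\frsbd\phi$) enters.
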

\begin{proof}
To check~\eqref{e:euler-relaxed} we notice that, thanks
to~\cite[Lemma~2.32, p.~214]{Mordukhovich06},
\[
\forall\, \eta >0 \ \ \exists\, z^1_\eta \in \Banach,  \ z^2_\eta
\in D(\phi)\, : \quad \begin{cases} \displaystyle \|z^1_\eta-z_j^k\|
+ \|z^2_\eta-z_j^k \| &\leq \eta\,,
\\
\displaystyle | \|z^1_\eta- u_k\|^2 -\|z_j^k-u_k \|^2| & \leq  2\eta
r_j^k\,,
\\
\displaystyle | \phi(z^2_\eta) - \phi(z_j^k) | &\leq \eta\,,
\end{cases}.
\]
 and
\[ \exists \, w_\eta \in \rmJ_2 \left(\frac{z^1_\eta-u_k}{r_j^k}
\right), \ \exists\,\xi_\eta \in \frsbd\phi(z^2_\eta), \ \exists\,
\zeta_\eta \in \Banach^*, \ \|\zeta_\eta\|_* \leq \eta, \  \
\text{s.t.} \ \ w_\eta + \xi_\eta + \zeta_\eta=0\,. \]
 Choosing $\eta=1/n$, we find
sequences $\{z^1_n\}$, $\{z^2_n\}$, $\{w_n\}$, $\{\xi_n\}$, and $\{
\zeta_n \}$ fulfilling
\begin{equation}
\label{zeta-conv}
\begin{cases}
& \displaystyle \zeta_n \to 0 \qquad \text{ in $\Banach^*$,}
\\
\displaystyle & z^1_n \to z_j^k \qquad \text{ in $\Banach$,}
\\
\displaystyle & z^2_n \to z_j^k \qquad \text{ in $\Banach$,  with $
\phi(z^2_n) \to \phi(z_j^k ) $.}
\end{cases}
\end{equation}
Furthermore, being $w_n \in \rmJ_2 ((z^1_n-u_k)/r_j^k) $, thanks to
the second of~\eqref{zeta-conv},  we have  $\sup_n \|w_n\|_* \leq
C$,  for a positive constant $C$ only depending on $\|u_k\|$ and
$\|z_j^k\|$. Thus, there exists $\bar{w} \in \Banach^* $ such that,
up to a subsequence,  $w_n \weaksto \bar{w} $ in $\Banach^*$ as $ n
\to \infty$. By the strong-weak$^*$ closedness of $\rmJ_2$, we find
that $\bar{w} \in \rmJ_2 ((z^k_j-u_k)/r_j^k)$.  On the other hand,
by the definition of $\rmJ_2$ and again  by the second
of~\eqref{zeta-conv},
\[
\| w_n \|_*^2 = \left\langle w_n, \frac{z^1_n-u_k}{r_j^k}\right
\rangle \to \left\langle \bar{w}, \frac{z^k_j-u_k}{r_j^k} \right
\rangle
 = \|\bar{w}\|_*^2\,,
\]
so that we ultimately deduce that $w_n \to \bar{w} $ in $\Banach^*$
as $ n \to \infty$. By the first of \eqref{zeta-conv}, we
 conclude that $\xi_n \to -\bar{w}  $ in $\Banach^*$. Combining
 the latter information with the third of \eqref{zeta-conv}, we find
 from the definition of strong limiting subdifferential that $-\bar{w}  \in
 \slmsbd\phi(z_j^k)$, so that \eqref{e:euler-relaxed} ensues.
 \end{proof}
\paragraph{\bf Proof of Proposition~\ref{prop:key1}.} To
prove~\eqref{eq:key-ineq}, like in Section~\ref{s:6} we exhibit a
sequence $\{u_k\}\subset \Banach$ for which~\eqref{e:by-definition}
holds and, correspondingly, sequences $\{r_j^k\}_j$ and $z_j^k $ as
in~\eqref{e:minimiz-bis}, fulfilling \eqref{convs}. The only
difference with respect to the argument developed in
Section~\ref{s:6} is that, without~\eqref{frechet-renorm}, the Euler
equation corresponding to~\eqref{e:minimiz-bis}
is~\eqref{e:euler-repeated}, so that
\begin{equation}
\label{appendix1}
 \exists\, w_j^k \in \rmJ_2
\left(\frac{z_j^k-u_k}{r_j^k} \right) \cap \left(-
\slmsbd\phi(z_j^k)\right)
 \end{equation}
 which also fulfills
$ \ls^2(u_k) = \lim_{j \up \infty}\|w_j^k \|_*^2$. Then, again  with
a diagonalization procedure  we find  a subsequence $w_k$ with  $w_k
\weaksto w$ in $\Banach^*$. Using~\eqref{appendix1} and   the
closure formula~\eqref{e:s-w-closure}, we again arrive at \[ -w \in
\lmsbd \phi(u)\,,
\]
 and conclude the proof of~\eqref{eq:key-ineq} along the very same
 lines as in the proof of Proposition~\ref{prop:key1}.
\fin
\section{Maximal functions}
\label{ss:appendix-c} We recall the definition of the \emph{local
maximal function} of a locally finite measure and two related
results, referring e.g. to~\cite{Stein70} for  all details.
\begin{definition}
\label{def:maximal} Let $\mu$ be a (vector-valued) locally finite
measure. For every $\lambda>0$ the \emph{local maximal function}
$\mathrm{M}_{\lambda}\mu$ of $\mu$ is defined by
\[
\mathrm{M}_{\lambda}\mu(x):= \sup_{0<r<\lambda}
\frac{|\mu|(B_r(x))}{\mathscr{L}^d(B_r(x))} \qquad \text{for all $x
\in \R^d$.}
\]
In particular, when $\mu=f \mathscr{L}^d$ for some $f \in
L_{\mathrm{loc}}^1 (\R^d;\R^d)$, we shall use the notation
$\mathrm{M}_{\lambda} f$.
\end{definition}
\begin{lemma}
\label{lemma:c-1} \label{l:estimate} For all  $1<p<\infty $  there
exists a constant  $C(d,p)>0$  such that for every $f \in
L^p(\R^d;\R^d)$ there holds \[ \int_{B_r(0)} |\mathrm{M}_{\lambda}
f(x)|^p \dd x \leq  C(d,p) \int_{B_{r+\lambda}(0)} |f(x)|^d \dd x
\qquad \text{for all $r>0.$}
\]
\end{lemma}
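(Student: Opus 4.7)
The plan is to reduce the claimed local estimate to the classical Hardy--Littlewood maximal inequality on $\R^d$. Recall that the (global) centered maximal function
\[
\mathrm{M} f(x) := \sup_{r>0} \frac{1}{\mathscr{L}^d(B_r(x))} \int_{B_r(x)} |f(y)| \, \dd y
\]
satisfies, for every $1<p<\infty$, the strong-type $(p,p)$ bound $\|\mathrm{M} f\|_{L^p(\R^d)} \le C(d,p) \|f\|_{L^p(\R^d)}$, which is the content of the classical Hardy--Littlewood--Wiener theorem (see, e.g., \cite{Stein70}). This is the only nontrivial ingredient; the rest is a localization argument.

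First, I would observe that if $x \in B_r(0)$ and $0<s<\lambda$, then $B_s(x) \subset B_{r+\lambda}(0)$. Consequently, upon setting
\[
\tilde f := f\, \mathbf{1}_{B_{r+\lambda}(0)},
\]
one has $\mathrm{M}_\lambda f(x) = \mathrm{M}_\lambda \tilde f(x)$ for every $x \in B_r(0)$, since only the values of $f$ inside $B_{r+\lambda}(0)$ enter into the averages defining $\mathrm{M}_\lambda f(x)$. Second, truncating the supremum at $\lambda$ can only decrease the maximal function, so
\[
\mathrm{M}_\lambda \tilde f(x) \le \mathrm{M}\tilde f(x) \quad \text{for all } x \in \R^d.
\]

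Combining these two pointwise inequalities and then integrating over $B_r(0)$, I would obtain
\[
\int_{B_r(0)} |\mathrm{M}_\lambda f(x)|^p \, \dd x \le \int_{\R^d} |\mathrm{M}\tilde f(x)|^p \, \dd x \le C(d,p)^p \int_{\R^d} |\tilde f(x)|^p \, \dd x = C(d,p)^p \int_{B_{r+\lambda}(0)} |f(x)|^p \, \dd x,
\]
where the middle inequality is the Hardy--Littlewood maximal inequality applied to $\tilde f \in L^p(\R^d;\R^d)$ (note $\tilde f \in L^p$ since $f$ is). Absorbing $C(d,p)^p$ into a new constant (still denoted $C(d,p)$) yields the claim. (I read the exponent $d$ appearing on the right-hand side of the stated inequality as a typo for $p$, which is what the argument naturally produces and what is needed in its application to \eqref{e:serve1}--\eqref{e:serve2}.)

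There is essentially no obstacle here once the Hardy--Littlewood inequality is granted; the only point deserving a line of care is the verification that $\mathrm{M}_\lambda f = \mathrm{M}_\lambda \tilde f$ on $B_r(0)$, which relies on the elementary inclusion $B_s(x) \subset B_{r+\lambda}(0)$ for $x \in B_r(0)$ and $s < \lambda$. Thus the proof is essentially three lines.
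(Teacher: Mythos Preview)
Your argument is correct: the localization $\tilde f = f\,\mathbf{1}_{B_{r+\lambda}(0)}$ together with $\mathrm{M}_\lambda \tilde f \le \mathrm{M}\tilde f$ and the classical Hardy--Littlewood maximal inequality is exactly the standard way to derive this local bound, and your reading of the exponent $d$ on the right as a typo for $p$ is right (the applications \eqref{e:serve1}--\eqref{e:serve2} use $p=2$).

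There is nothing to compare with on the paper's side: the paper does not supply a proof of this lemma at all. It is stated in the appendix as one of ``two related results'' on maximal functions, with the reader referred to \cite{Stein70} for all details. So you have filled in what the paper leaves to the literature, and done so with the expected three-line argument.
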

\begin{lemma}
\label{lemma:c-2}
 There exists a constant  $C(d)>0$   such that
for every $\lambda >0 $ and $v \in \mathrm{BV} (\R^d)$ there holds
\[
|v(x) - v(y)| \leq C(d) |x-y| \left( M_\lambda \nabla v(x) +
M_\lambda \nabla v(y) \right)
\]
for all $x,y \in \R^d \setminus N_v$ (with $N_v \subset \R^d$ a
negligible set) with $|x-y| \leq \lambda$.
\end{lemma}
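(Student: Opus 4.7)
The plan is to prove the statement by the classical route: reduce to the precise representative of $v$ at Lebesgue points, estimate the deviation of $v$ from its ball average via a dyadic telescoping argument based on the BV Poincaré inequality, and then compare the two averages $v_{B_r(x)}$ and $v_{B_r(y)}$ through a common intermediate ball. The negligible set $N_v$ will be the complement of the set of Lebesgue points of $v$, which is $\mathscr{L}^d$-negligible by standard BV theory. Throughout I write $v_B := \fint_B v \dd z$ for the average on a ball $B$.

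\emph{Step 1 (telescoping).} Fix $x \in \R^d \setminus N_v$ and $0 < r \leq \lambda$, and set $r_k := r 2^{-k}$. Since $v(x) = \lim_k v_{B_{r_k}(x)}$ at a Lebesgue point, I telescope:
\[
|v(x) - v_{B_r(x)}| \leq \sum_{k=0}^{\infty}|v_{B_{r_{k+1}}(x)} - v_{B_{r_k}(x)}|.
\]
Because $B_{r_{k+1}}(x)\subset B_{r_k}(x)$ with comparable Lebesgue measure, each summand is controlled by $2^d \fint_{B_{r_k}(x)} |v - v_{B_{r_k}(x)}|\dd z$. The BV Poincaré inequality on a ball gives
\[
\fint_{B_\rho(x)} |v - v_{B_\rho(x)}| \dd z \leq C_d \rho \frac{|Dv|(B_\rho(x))}{\mathscr{L}^d(B_\rho(x))},
\]
and for $\rho=r_k\leq \lambda$ the last quotient is bounded by $M_\lambda \nabla v(x)$ (interpreting $\nabla v$ as the measure $Dv$ in accordance with Definition~\ref{def:maximal}). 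Summing the geometric series yields
\[
|v(x) - v_{B_r(x)}| \leq C(d) r\, M_\lambda \nabla v(x).
\]

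\emph{Step 2 (comparing averages).} Given $x,y\in \R^d\setminus N_v$ with $r := |x-y| \leq \lambda$, choose the midpoint $z = (x+y)/2$. Then $B_{r/2}(z) \subset B_r(x)\cap B_r(y)$, and both $B_r(x)$ and $B_r(y)$ have radius $\leq \lambda$. Hence
\[
|v_{B_r(x)} - v_{B_{r/2}(z)}| \leq \frac{\mathscr{L}^d(B_r)}{\mathscr{L}^d(B_{r/2})} \fint_{B_r(x)} |v-v_{B_r(x)}| \dd w \leq C(d)\, r\, M_\lambda \nabla v(x),
\]
and, exchanging the roles of $x$ and $y$, $|v_{B_r(y)} - v_{B_{r/2}(z)}| \leq C(d)\, r\, M_\lambda \nabla v(y)$. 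Crucially the midpoint trick keeps every ball involved of radius at most $r \leq \lambda$, so the bounds involve $M_\lambda$ rather than $M_{2\lambda}$.

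\emph{Step 3 (conclusion).} Combining Steps 1 and 2 via the triangle inequality,
\[
|v(x)-v(y)| \leq |v(x) - v_{B_r(x)}| + |v_{B_r(x)} - v_{B_r(y)}| + |v_{B_r(y)} - v(y)| \leq C(d)\,|x-y|\bigl(M_\lambda \nabla v(x) + M_\lambda \nabla v(y)\bigr),
\]
which is the claim. The only minor technical point to be careful about is the radius bookkeeping in Step 2: a naive comparison through $B_{2r}(x)$ would need $2r \leq \lambda$, which is not assumed; this is precisely why the midpoint ball $B_{r/2}(z)$ is the right intermediate object. All other ingredients are classical (BV Poincaré, existence of precise representatives, doubling of $\mathscr{L}^d$), so no essential obstacle remains.
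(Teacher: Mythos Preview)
Your argument is correct and is precisely the classical proof of this pointwise inequality (telescoping via the BV Poincar\'e inequality combined with a midpoint-ball comparison to keep all radii at most $|x-y|\leq\lambda$). Note that the paper does not supply its own proof of Lemma~\ref{lemma:c-2}: it is stated in the appendix as a recalled fact with a reference to \cite{Stein70}, so there is nothing to compare beyond observing that your route is the standard one.
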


\bibliographystyle{alpha}
\def\cprime{$'$} \def\cprime{$'$}

\end{document}